\newtheorem{theorem}{Theorem}[section]
\newtheorem{definition}[theorem]{Definition}
\newtheorem{example}[theorem]{Example}
\newtheorem{lemma}[theorem]{Lemma}
\newtheorem{proposition}[theorem]{Proposition}
\newtheorem{remark}[theorem]{Remark}
\newtheorem*{assumption}{Packing Assumption}
\newtheorem{theorem1}{Theorem}
\newtheorem{thm}{Theorem}
\newtheorem{thma}{Theorem}
\newtheorem{thmprime}{Theorem}
\numberwithin{equation}{section}
\newcommand{\interior}[1]{%
	{\kern0pt#1}^{\mathrm{o}}%
}
\def\C{{\mathcal C}}
\def\D{{\mathcal D}}
\newcommand{\rn}{\mathbb{R}^n}
\newcommand{\cont}{\mathcal{H}^\varphi_\infty}
\newcommand{\contC}{\mathcal{H}^\C_\infty}
\newcommand{\contH}{H^\lambda}
\newcommand{\dcontH}{\, {\rm d}H^\lambda}
\newcommand{\dcontC}{\, {\rm d}\mathcal{H}^\C_\infty}
\newcommand{\dc}{\,{\rm d}\C}
\newcommand{\dv}{\,{\rm d}\mathcal{H}^\C_Q}
\newcommand{\dvv}{\,{\rm d}\mathcal{H}^\varphi_\infty}
\newcommand{\Dq}{\D(Q)}
\newcommand{\dt}{\,{\rm d}t}
\newcommand{\barint}{
	\rule[.036in]{.12in}{.009in}\kern-.16in \displaystyle\int }
\newcommand{\barcal}{\mbox{$ \rule[.036in]{.11in}{.007in}\kern-.128in\int $}}
\DeclareMathOperator*{\argmin}{arg\,min}
\let\@wraptoccontribs\wraptoccontribs
\mathchardef\mhyphen="2D
\title{The Capacitary John-Nirenberg Inequality Revisited}
\author[R. Basak]{Riju Basak}
\address[R. Basak]{Department of Mathematics, National Taiwan Normal University, No. 88, Section 4, Tingzhou Road, Wenshan District, Taipei City, Taiwan 116, R.O.C.
}
\email{rijubasak52@ntnu.edu.tw}
\author[Y.~W.~B. Chen]{You-Wei Benson Chen}
\address[Y.~W.~B. Chen]{Department of Mathematics, National Taiwan University, Taipei 10617, R.O.C.
}
\email{bensonchen.sc07@nycu.edu.tw}
\author[P. Roychowdhury]{Prasun Roychowdhury}
\address[P. Roychowdhury]{Mathematics Division, National Center for Theoretical Sciences, NTU, Cosmology Building, No. 1, Sec. 4, Roosevelt Rd., Taipei City, Taiwan 106, R.O.C.\\
	\newline
	and
	\newline
	Dipartimento di Matematica e Applicazioni, Università degli Studi di Milano–Bicocca, Via Cozzi 55, 20125 Milano, Italy
} 
\email{prasun.roychowdhury@unimib.it}
\author[D. Spector]{Daniel Spector}
\address[D. Spector]{Department of Mathematics, National Taiwan Normal University, No. 88, Section 4, Tingzhou Road, Wenshan District, Taipei City, Taiwan 116, R.O.C.\\
	\newline
	and
	\newline
	National Center for Theoretical Sciences\\No. 1 Sec. 4 Roosevelt Rd., National Taiwan
	University\\Taipei, 106, Taiwan
		\newline
	and
\newline
Department of Mathematics, University of Pittsburgh, Pittsburgh, PA 15261 USA
}
\email{spectda@gapps.ntnu.edu.tw}
\subjclass[2010]{46E35, 42B35, 42B37}
\keywords{Capacity, capacitary maximal function, Choquet integral, John-Nirenberg inequality}
\date{\today}
\begin{document}
	
	\maketitle
	
	\begin{abstract}
		In this paper, we establish maximal function estimates, Lebesgue differentiation theory, Calder\'on-Zygmund decompositions, and John-Nirenberg inequalities for translation invariant Hausdorff contents.  We further identify a key structural component of these results -- a packing condition satisfied by these Hausdorff contents which compensates for the non-linearity of the capacitary integrals.  We prove that for any outer capacity, this packing condition is satisfied if and only if the capacity is equivalent to its induced Hausdorff content.   Finally, we use this equivalence to extend the preceding theory to general outer capacities which are assumed to satisfy this packing condition. 
	\end{abstract}

	\section{Introduction}
	\subsection{Main Results}
	
	Let $\mathcal{P}(\rn)$ denote the set of all subsets of $\rn$ and suppose $\C:\mathcal{P}(\rn)\rightarrow [0,\infty]$ is an outer capacity in the sense of N. Meyers (or outer capacity for brevity), e.g. $\C$ satisfies
	\begin{itemize}
		\item[(i)] (Null set) $\C(\emptyset)=0$; 
		\item[(ii)] (Monotonicity) If $A\subset B$, then $\C(A)\leq \C(B)$;
		\item[(iii)] (Countable subadditivity) If $A\subset \cup_{i=1}^\infty A_i$, then $\C(A)\leq \sum_{i=1}^\infty\C(A_i)$.
		\item[(iv)] (Outer regularity) $\C(A) = \inf_{A \subset U, U \text{open}} \C(U)$.
	\end{itemize}
	For $f: \mathbb{R}^n \to \mathbb{R}$, further define the maximal function
	\begin{align}\label{capacitary-maximal}
		\mathcal{M}_{\C} f(x):= \sup_{r>0} \frac{1}{\C(B(x,r))}\int_{B(x,r)} |f(y)| \dc,
	\end{align}
	where the integral on the right-hand-side is intended in the sense of Choquet (see equation \eqref{choq-int} in Section \ref{sec:preliminaries} below) and $B(x,r)$ is the open ball centered at $x$ and with radius $r>0$.
	

	When $\C$ is a doubling Radon measure, estimates for the maximal function \eqref{capacitary-maximal} and the resulting Lebesgue differentiation theorems, Calder\'on-Zygmund decomposition, and John-Nirenberg inequality are well-established and classical, see e.g. \cite[p.~13-17]{Stein} and \cite[p.~14]{ABKY}.  Motivated by work on the study of critical Sobolev embeddings \cite{MS}, in the recent papers \cite{ChenSpector, ChenOoiSpector} an analogous theory has been established for $\C =\mathcal{H}^\beta_\infty$, the spherical Hausdorff content of dimension $\beta \in (0,n]$.  These results represent one facet of a broader renewed interest in the $\beta$-dimensional Hausdorff content -- and of capacities in general -- in the last decade, where a number of results have been developed \cite{CKK,FKR,HH,HH1, HH2, HKST,OP,OP1,KK,PonceSpector1,ST, STW1, STW2} which build on the classical results for such structures \cite{Adams, Adams1, AdamsPolking, AdamsChoquet,AdamsChoquet1prime,AdamsChoquet1,Anger, Choquet,Mazya,MazyaHavin,Meyers,OV}.
	
	The first aspect of this paper is the extension of the analysis for $\mathcal{H}^\beta_\infty$ to a general class of Hausdorff contents with suitable structural assumptions.  To state these results let us introduce some notations.  Throughout the paper we denote by $Q$ a half-open cube of $\rn$ with side length $\ell$,	
	\begin{align*}
		Q=[x_1,x_1+\ell)\times\cdots\times [x_n,x_n+\ell), \quad \text{ where }\, (x_1,\cdots,x_n)\in\rn, \quad\ell>0,
	\end{align*}
	and let $\D_0(Q)$ be the dyadic cubes associated with the above $Q$ defined by
	\begin{align*}
		\D_0(Q)=\{2^k(Q+m)\,:\, k\in\mathbb{Z}, \, m\in \mathbb{Z}^n\}.
	\end{align*}
	We define the extended version of the above dyadic cubes by
	\begin{align*}
		{\D}(Q)=\D_0(Q)\bigcup \left\{\prod_{i=1}^{n}I_i\,:\, I_i=[x_i,\infty) \, \text{ or } \, (-\infty,x_i)\right\}.
	\end{align*}
	We next introduce a class of translation invariant Hausdorff contents which generalize $\mathcal{H}^\beta_\infty$.  In particular, let $\varphi : [0,\infty] \to [0,\infty]$ be a monotone increasing function such that $\lim_{t \to 0^+} \varphi(t)=0$, and define the dyadic Hausdorff content associated to $\varphi$:
	\begin{align}\label{dyadiccontent}
		\cont(E):= \inf \left\{ \sum_{i} \varphi(l(Q_i)): E\subset \bigcup_{i} Q_i, Q_i\in \D(Q) \right\}.
	\end{align}

	When $\varphi(t) = t^\beta$ for $\beta \in (0,n]$, \eqref{dyadiccontent} defines the dyadic Hausdorff content of dimension $\beta$, a set function which is equivalent to the spherical content $\mathcal{H}^\beta_\infty$.  As this equivalence depends only on the underlying structure of balls and cubes in $\mathbb{R}^n$ and the monotonicity of $\varphi$, one has a similar equivalence for \eqref{dyadiccontent} and its spherical analog.  The former is an outer measure, i.e., satisfies $(i)$, $(ii)$, and $(iii)$ in the definition of an outer capacity above, while the latter is an outer capacity. As various arguments are simplified when working with dyadic cubes, because of this equivalence we prefer to introduce and work with the dyadic variants of these set functions.  The outer regularity assumption shall return at a later point for general capacities.  In the meantime, let us note that the definition of the maximal function \eqref{capacitary-maximal} makes sense for any monotone increasing set function.

	We are now prepared to state our results for this class of Hausdorff contents.  First, we have the following theorem which gives bounds for the Hausdorff content maximal function.
	\begin{thmprime}\label{weak-type-cont-maximal}
		Suppose $\varphi : [0,\infty] \to [0,\infty]$ is a monotone increasing function such that $\lim_{t \to 0^+} \varphi(t)=0$,  let $\cont$ be the dyadic content associated to $\varphi$ given in \eqref{dyadiccontent}, and let $f:\mathbb{R}^n \to \mathbb{R}$.	
		\begin{itemize}
			\item[(i)]  Then for every $t>0$, there exists a constant $C>0$ such that 
			\begin{align*}
				\cont \left(\left\{ x\in \rn \, :\, \mathcal{M}_{\cont} f(x) >t\right\}\right) \leq \frac{C}{t} \int_{\mathbb{R}^n} |f|\; \dvv.
			\end{align*}

			\item[(ii)] If $1<p<\infty$, then there exists a constant $C^{}>0$ such that 
			\begin{align*}
				\int_{\rn} (\mathcal{M}_{\cont} f)^p  \;\dvv \leq C \int_{\mathbb{R}^n} |f|^p \;\dvv.
			\end{align*}
		\end{itemize}
	\end{thmprime}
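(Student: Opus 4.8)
Here is the route I would take; the decomposition has a routine reduction step, a routine stopping‑time step, one genuinely delicate combinatorial estimate, and a routine passage from (i) to (ii).

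\emph{Reduction to a dyadic maximal operator.} Using the equivalence of $\cont$ with its spherical analogue $\mathcal H^\varphi_\infty$ (and hence the translation invariance of the latter) one first replaces balls by dyadic cubes. Fix a finite family of shifted dyadic grids $\D^{(1)},\dots,\D^{(K)}$, $K=K(n)$, such that every ball $B(x,r)$ lies in some $Q\in\D^{(i)}$ with $\ell(Q)\le C_n r$; covering that $Q$ by boundedly many balls of radius $r$ and invoking translation invariance gives $\cont(Q)\le C_n'\,\cont(B(x,r))$, hence
\[
\ma f(x)\le C_n''\sum_{i=1}^{K} M^{(i)}f(x),\qquad M^{(i)}f(x):=\sup\Big\{\tfrac{1}{\cont(Q)}\int_Q|f|\dvv:\ x\in Q\in\D^{(i)}\Big\}.
\]
Thus it suffices to prove the weak $(1,1)$ bound, with constant independent of $t$, for a single dyadic operator $M^d:=M^{(i)}$ (we may assume $\cont$ is nontrivial and $\int_{\rn}|f|\dvv<\infty$, the remaining cases being vacuous or immediate).

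\emph{Stopping time.} Fix $t>0$. Since $\cont(Q)\to\infty$ as $\ell(Q)\to\infty$ while $\int_Q|f|\dvv\le\int_{\rn}|f|\dvv$, every dyadic cube with $\frac1{\cont(Q)}\int_Q|f|\dvv>t$ is contained in a maximal such cube; let $\{Q_j\}$ be the maximal ones. They are pairwise disjoint, $\{M^d f>t\}=\bigcup_j Q_j$, and by countable subadditivity together with the stopping inequality,
\[
\cont(\{M^d f>t\})\le\sum_j\cont(Q_j)<\frac1t\sum_j\int_{Q_j}|f|\dvv .
\]

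\emph{The packing estimate (the crux).} It remains to show
\[
\sum_j\int_{Q_j}|f|\dvv\ \le\ C\int_{\rn}|f|\dvv ,
\]
with $C$ independent of $t$. This is precisely where the non‑additivity of the Choquet integral must be absorbed: for a general outer measure and an \emph{arbitrary} pairwise disjoint family of dyadic cubes the left side can strictly exceed $\int_{\rn}g\dvv$ (take $g\equiv1$ and the cubes of side $2^{-k}$ tiling a fixed cube), so the inequality must genuinely use that the $Q_j$ are stopping cubes together with the packing geometry of $\cont$. Passing to the layer‑cake representation $\int_{Q_j}|f|\dvv=\int_0^\infty\cont(\{|f|>\lambda\}\cap Q_j)\,d\lambda$, it is enough to control $\sum_j\cont(A\cap Q_j)$ by $C\,\cont(A)$ for the superlevel sets $A$ of $|f|$. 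The mechanism I would pursue is: sort the $Q_j$ by their parents $P$; apply the one‑scale inequality $\sum_{R\ \text{child of }P}\cont(A\cap R)\le C_n\,\cont(A\cap P)$, which follows from the definition of $\cont$ by localizing a near‑optimal dyadic cover of $A$ to the children of $P$; and then recover the remaining loss from the stopping condition $\int_P|f|\dvv\le t\,\cont(P)$ that holds on every parent. Making this last interaction uniform in the number of generations is, I expect, the main obstacle of the whole argument — it is exactly the phenomenon the ``packing condition'' of the introduction is designed to encode. Granting it, the reduction step yields (i) with a $t$‑independent constant.

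\emph{From (i) to (ii).} Since $|f|=(|f|\wedge\tfrac t2)+(|f|-\tfrac t2)_+$ is a sum of comonotone functions, the Choquet integral splits additively on this decomposition, so for every ball $B$, $\frac1{\cont(B)}\int_B|f|\dvv\le\tfrac t2+\frac1{\cont(B)}\int_B(|f|-\tfrac t2)_+\dvv$; taking suprema, $\ma f\le\tfrac t2+\ma\big((|f|-\tfrac t2)_+\big)$, whence $\{\ma f>t\}\subseteq\{\ma((|f|-\tfrac t2)_+)>\tfrac t2\}$. Applying (i) to $(|f|-\tfrac t2)_+$ and using the layer‑cake identity twice,
\[
\int_{\rn}(\ma f)^p\dvv=\int_0^\infty pt^{p-1}\cont(\{\ma f>t\})\,dt\le\int_0^\infty pt^{p-1}\,\frac{2C}{t}\Big(\int_{t/2}^\infty\cont(\{|f|>u\})\,du\Big)\,dt ,
\]
and Fubini's theorem — legitimate because $p>1$ makes $\int_0^{2u}t^{p-2}\,dt<\infty$ — identifies the right side with a constant depending only on $n,p$ times $\int_0^\infty u^{p-1}\cont(\{|f|>u\})\,du=\tfrac1p\int_{\rn}|f|^p\dvv$, which gives (ii).
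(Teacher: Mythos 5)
Your overall architecture --- reduce to a dyadic operator, form stopping-time cubes, prove a packing estimate for the sum of Choquet integrals, and pass from (i) to (ii) by truncation and layer-cake/Fubini --- is the right one, and the first, second, and fourth steps are essentially sound. (The paper handles the dyadic reduction by a ``$3Q'$''-covering lemma in one fixed lattice rather than shifted grids, and obtains (ii) by citing a Marcinkiewicz-type interpolation lemma rather than your explicit comonotone splitting; both alternatives are standard and interchangeable.) The gap is at the step you yourself flag as the crux: you write ``Granting it, the reduction step yields (i),'' and the mechanism you sketch would not close.

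Specifically, you try to bound $\sum_j \int_{Q_j}|f|\dvv$ over \emph{all} stopping cubes $Q_j$, using the one-scale estimate $\sum_{R\ \text{child of}\ P}\cont(A\cap R)\le 2^n\cont(A\cap P)$ (true, but trivially so by monotonicity) and hoping the stopping condition on parents recovers the generation-dependent loss. As you note, iterating across $k$ generations gives $2^{nk}$, and it is not clear how the stopping inequality tames this; the stopping cubes are pairwise disjoint but need not be well packed. The paper instead never sums over the full stopping family. It applies the Melnikov-type packing lemma (Proposition~\ref{decom-cubes}) to extract a subfamily $\{Q_{j_k}\}$ that is \emph{a priori $2$-packed} --- $\sum_{Q_{j_k}\subset Q'}\cont(Q_{j_k})\le 2\cont(Q')$ for every $Q'\in\Dq$ --- together with disjoint ancestors $\{\tilde Q_k\}$ covering $\bigcup_j Q_j\setminus\bigcup_k Q_{j_k}$ and satisfying $\cont(\tilde Q_k)\le\sum_{Q_{j_i}\subset\tilde Q_k}\cont(Q_{j_i})$. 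That last property absorbs the ancestors, giving $\cont(\{M^d f>t\})\le\sum_k\cont(Q_{j_k})$, a sum only over the well-packed subfamily; the stopping inequality gives $\cont(Q_{j_k})<\tfrac1t\int_{Q_{j_k}}|f|\dvv$; and the $2$-packing bound feeds into Proposition~\ref{deco-cube-int} (quasi-additivity of the Choquet integral over packed families), yielding $\sum_k\int_{Q_{j_k}}|f|\dvv\le 2\int_{\rn}|f|\dvv$. So the missing idea is not a refined induction over generations with the stopping condition; it is to first pass to a subfamily whose capacity sum already dominates the level set and which is well packed \emph{unconditionally}, so that quasi-additivity is available with no loss in depth.
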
	
	\noindent
	Here $\mathcal{M}_{\cont}$ denotes the Hausdorff content maximal function, defined by equation \eqref{capacitary-maximal} with $\C = \cont$.  
	
	Second, from Theorem \ref{weak-type-cont-maximal}, following the usual argument we immediately obtain a Lebesgue differentiation theorem for $L^1(\mathbb{R}^n;\cont)$, the closure of the space of continuous bounded functions with respect to the norm defined by the content.
	\begin{thmprime}\label{Lebesgue-diff-cont}
		Suppose $\varphi : [0,\infty] \to [0,\infty]$ is a monotone increasing function such that $\lim_{t \to 0^+} \varphi(t)=0$, 
		and let $\cont$ be the dyadic content associated to $\varphi$ given in \eqref{dyadiccontent}.  For $f \in L^1(\mathbb{R}^n;\cont)$, one has 
		\begin{align*}
			\lim_{r \to 0} \frac{1}{\cont(B(x,r))} \int_{B(x,r)} |f- f(x)| \dvv = 0,
		\end{align*} 
		for $\cont$ -quasi every $x\in \mathbb{R}^n$. 
	\end{thmprime}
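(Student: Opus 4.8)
\textbf{Proof proposal for Theorem~\ref{Lebesgue-diff-cont}.}
The plan is to follow the classical route from a weak-type maximal function estimate to a differentiation theorem, being careful with the two features that distinguish the content setting from the measure setting: (a) the Choquet integral is only subadditive, not additive, in the integrand, and (b) statements hold only up to sets of content zero rather than almost everywhere. The main input is Theorem~\ref{weak-type-cont-maximal}(i), the weak-$(1,1)$ bound for $\mathcal{M}_{\cont}$, together with the equivalence between $\cont$ and its spherical analogue and the fact that $\cont(B(x,r)) > 0$ for $r > 0$ (so the averages in the statement make sense).

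First I would record the sublinearity of the quantity
\[
\Lambda f(x) := \limsup_{r \to 0} \frac{1}{\cont(B(x,r))} \int_{B(x,r)} |f - f(x)| \dvv
\]
in $f$: since the Choquet integral satisfies $\int |g+h| \dvv \le \int |g| \dvv + \int |h| \dvv$ (subadditivity in the integrand, valid because $\cont$ is an outer measure, hence strongly subadditive), one gets $\Lambda(f) \le \Lambda(g) + \Lambda(f-g) \le \Lambda(g) + \mathcal{M}_{\cont}(f - g) + |f-g|$ pointwise, for any splitting $f = g + (f-g)$. Next I would fix $\eta > 0$ and use the definition of $L^1(\mathbb{R}^n;\cont)$ as the closure of bounded continuous functions: choose a bounded continuous $g$ with $\int_{\rn} |f - g| \dvv < \eta$. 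For continuous $g$ one has $\Lambda g \equiv 0$ everywhere, by uniform continuity of $g$ on compact sets together with $\cont(B(x,r)) > 0$. Therefore
\[
\{ x : \Lambda f(x) > \lambda \} \subset \left\{ x : \mathcal{M}_{\cont}(f-g)(x) > \tfrac{\lambda}{2} \right\} \cup \left\{ x : |f(x) - g(x)| > \tfrac{\lambda}{2} \right\}.
\]
Applying Theorem~\ref{weak-type-cont-maximal}(i) to the first set and the Chebyshev-type inequality for the Choquet integral (i.e.\ $\cont(\{|h| > s\}) \le \tfrac{1}{s}\int |h| \dvv$, which holds because $\cont$ is monotone and $\{|h| > s\} $ has the obvious content bound) to the second, both pieces are bounded by $C\eta/\lambda$. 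Since $\eta > 0$ is arbitrary and $\cont$ is an outer measure (so subadditive over the union), $\cont(\{ x : \Lambda f(x) > \lambda \}) = 0$ for every $\lambda > 0$; taking a union over $\lambda = 1/k$ and using countable subadditivity of $\cont$ gives $\cont(\{\Lambda f > 0\}) = 0$, which is exactly the claim that the limit is zero $\cont$-quasi everywhere.

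The step I expect to require the most care is the Chebyshev inequality and the subadditivity used for the "$|f - g|$" term: one must check that $\int_{\rn} |f - g| \dvv$ genuinely controls $\cont(\{|f-g| > \lambda/2\})$, which is immediate from monotonicity and the layer-cake/Cavalieri form of the Choquet integral, and that the outer-measure property of $\cont$ legitimately upgrades the two-set estimate to a bound on the union — here the distinction between outer capacities (only countably subadditive) and the specific fact that $\cont$ is an honest outer measure is what makes the argument go through cleanly. A secondary technical point is justifying $\Lambda g \equiv 0$ for bounded continuous $g$: on $B(x,r)$ with $r$ small, $|g(y) - g(x)| \le \omega_g(r) \to 0$ uniformly, so the average is at most $\omega_g(r)$ regardless of the value $\cont(B(x,r))$, using only $\cont(B(x,r)) > 0$. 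Once these are in place the rest is the standard "approximate and split" argument, and no new ideas beyond Theorem~\ref{weak-type-cont-maximal} are needed.
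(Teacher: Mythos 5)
Your proposal is correct and follows essentially the same route as the paper's proof: approximate $f$ in $L^1(\cont)$ by a bounded continuous $g$, split $|f-f(x)|$ by the triangle inequality so the continuous piece contributes nothing in the limit, bound the remaining two pieces by the capacitary maximal function and Chebyshev respectively via Theorem~\ref{weak-type-cont-maximal}(i), and exhaust the bad set by countable subadditivity. The only cosmetic difference is that you fix the level $\lambda$ and let the approximation error $\eta\to 0$, whereas the paper fixes $k$ (working with $A_k$) and lets the approximation index $l\to\infty$; these are interchangeable organizations of the same argument.
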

	\noindent
	Here and in the sequel, statements which are quantified for $\C$-quasi every $x\in \mathbb{R}^n$ mean that there exists a set $E \subset \mathbb{R}^n$ such that $\C(E)=0$ and the assertion holds in $\mathbb{R}^n \setminus E$.
	
	Third, we have the following Calder\'on-Zygmund decomposition for the general Hausdorff content $\cont$.  
	\begin{thmprime}\label{cz}
		Suppose $\varphi : [0,\infty] \to [0,\infty]$ be a monotone increasing function such that $\lim_{t \to 0^+} \varphi(t)=0$, 
		and let $\cont$ be the dyadic content associated to $\varphi$ given in \eqref{dyadiccontent}.  There exists a constant $M_{0}>0$ such that for every $Q^\prime \in \Dq$, every $f \in L^1(Q^\prime;\cont)$, and $\Lambda>0$ which satisfy
		\begin{align*}
			\Lambda \geq \frac{1}{\cont(Q^\prime)} \int_{Q^\prime} |f| \dvv,
		\end{align*}
		then there exists a countable collection of non-overlapping dyadic cubes $\{ Q_k\}\subset Q^\prime$ subordinate to $Q$ such that 
		\begin{enumerate}
			\item $\Lambda< \frac{1}{\cont(Q_k)} \int_{Q_k} |f| \dvv \leq M_{0} \Lambda$,
			\item $|f(x)| \leq \Lambda$ for $\cont$ -quasi everywhere $x\in Q^\prime \setminus \bigcup_k Q_k$.
		\end{enumerate}		
	\end{thmprime}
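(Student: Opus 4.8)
The plan is to adapt the classical dyadic Calderón–Zygmund stopping-time argument to the content setting, using the packing condition mentioned in the abstract as the crucial replacement for the additivity of measures. I would begin with the cube $Q'$, and since $\frac{1}{\cont(Q')}\int_{Q'}|f|\dvv \leq \Lambda$, I would subdivide $Q'$ into its $2^n$ dyadic children. For each child $R$, I would test the averaging condition $\frac{1}{\cont(R)}\int_R |f|\dvv > \Lambda$; if it holds, $R$ is selected as one of the stopping cubes $Q_k$ and no further subdivision of $R$ occurs, while if it fails I would subdivide $R$ again and iterate. The collection $\{Q_k\}$ obtained this way is automatically non-overlapping and subordinate to $Q$, and the lower bound $\Lambda < \frac{1}{\cont(Q_k)}\int_{Q_k}|f|\dvv$ in (1) holds by the stopping rule.

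The upper bound in (1), namely $\frac{1}{\cont(Q_k)}\int_{Q_k}|f|\dvv \leq M_0\Lambda$, is where the dyadic structure and the packing condition enter. Each selected $Q_k$ has a dyadic parent $\widehat{Q_k}$ which was \emph{not} selected, hence $\frac{1}{\cont(\widehat{Q_k})}\int_{\widehat{Q_k}}|f|\dvv \leq \Lambda$. In the measure case one concludes immediately since $\int_{Q_k}|f| \leq \int_{\widehat{Q_k}}|f|$ and $\cont(\widehat{Q_k}) \leq 2^n \cont(Q_k)$ would give the bound; here monotonicity of the Choquet integral still gives $\int_{Q_k}|f|\dvv \leq \int_{\widehat{Q_k}}|f|\dvv$ since $Q_k \subset \widehat{Q_k}$, and for the dyadic content associated to $\varphi$ one has $\cont(\widehat{Q_k}) \leq \varphi(\ell(\widehat{Q_k})) = \varphi(2\ell(Q_k))$, while $\cont(Q_k) = \varphi(\ell(Q_k))$ need not be comparable to $\varphi(2\ell(Q_k))$ for a general monotone $\varphi$ without a doubling hypothesis. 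I would therefore argue instead directly from the fact that $\widehat{Q_k}$ is a \emph{single} dyadic cube: covering $\widehat{Q_k}$ by itself shows $\cont(\widehat{Q_k}) \leq \varphi(\ell(\widehat{Q_k}))$, and covering $Q_k$ by the single cube $Q_k$ together with the observation that any covering of $\widehat Q_k$ restricts to one of $Q_k$ gives $\cont(\widehat{Q_k}) \le \cont(Q_k) + (\text{content of the other }2^n-1\text{ siblings})$; combined with the packing/subadditivity structure one extracts $M_0 = 2^n$ (or a dimensional constant), exactly as the packing condition is designed to provide. This is the step I expect to be the main obstacle, and it is precisely the point at which the structural packing property compensates for the non-linearity of $\cont$.

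For conclusion (2), I would invoke Theorem \ref{Lebesgue-diff-cont}: on the complement $Q' \setminus \bigcup_k Q_k$, every point $x$ lies in an infinite decreasing chain of dyadic cubes $Q' = R_0 \supset R_1 \supset R_2 \supset \cdots$ none of which was selected, so $\frac{1}{\cont(R_j)}\int_{R_j}|f|\dvv \leq \Lambda$ for all $j$. Since these dyadic cubes shrink to $x$ and are comparable to balls $B(x, c\,\ell(R_j))$ in the content (by the balls–cubes equivalence noted after \eqref{dyadiccontent}), the Lebesgue differentiation theorem yields $|f(x)| = \lim_{j\to\infty} \frac{1}{\cont(R_j)}\int_{R_j}|f|\dvv \leq \Lambda$ for $\cont$-quasi every such $x$. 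The only care needed is to verify that the content of a dyadic cube is comparable to the content of an inscribed or circumscribed ball uniformly in scale, which again follows from monotonicity of $\varphi$ and the fixed geometric ratio between a cube and the concentric balls, so no doubling of $\varphi$ is required here. Assembling these three pieces — the stopping-time selection, the packing-based upper bound, and the differentiation-theorem argument on the complement — completes the proof.
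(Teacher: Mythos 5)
Your overall architecture (stopping-time selection, control of the average on a selected cube via its unselected parent, and differentiation on the complement) matches the paper's, but there is a real gap in the upper bound of item (1), and it is located exactly where you flagged the ``main obstacle.''

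You correctly observe that for a general monotone $\varphi$ with no doubling hypothesis, the naive inequality $\varphi(2\ell(Q_k)) \le D\,\varphi(\ell(Q_k))$ is unavailable. But your proposed workaround does not close the gap. Writing $\cont(\widehat{Q_k}) \le \cont(Q_k) + (\text{content of the other } 2^n-1 \text{ siblings})$ is just subadditivity, and by itself it gives you nothing unless you can compare the content of a sibling with that of $Q_k$. The packing assumption does not do this: it controls sums of contents of cubes in a family against the content of their union, not contents of individual cubes against each other. The ingredient you are missing is \emph{translation invariance} of $\cont$, which the paper isolates in Lemma \ref{content_doubling_lemma} and Remark \ref{doubling_consequence}: every sibling of $Q_k$ is a lattice translate of $Q_k$, a cover of $Q_k$ translates to a cover of the sibling with identical $\varphi$-weights, so $\cont(\text{sibling}) = \cont(Q_k)$, whence $\cont(\widehat{Q_k}) \le 2^n\cont(Q_k)$ with no hypothesis on $\varphi$ beyond monotonicity. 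Put differently, $\cont$ is dyadic-doubling even when $\varphi$ is not, precisely because $\cont$ is an infimum over coverings of a translation-invariant gauge, and this is the mechanism the paper uses; the packing condition plays no role at all in this step of the Calder\'on–Zygmund decomposition. (Incidentally, the identity $\cont(Q_k) = \varphi(\ell(Q_k))$ you assert need not hold — only $\le$ is automatic, since for a general monotone $\varphi$ the $2^n$ children of $Q_k$ could be a cheaper cover — but this does not affect the argument once translation invariance is in hand.)

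On item (2), your route through Theorem \ref{Lebesgue-diff-cont} and a cube–ball comparison is workable but unnecessarily indirect. The paper instead invokes the dyadic differentiation theorem (Theorem \ref{Lebesgue-differentiation-new-dyadic} with $\C = \cont$, which applies because Proposition \ref{deco-cube-int} shows $\cont$ satisfies (P)), which gives the conclusion on the complement directly from the chain of unselected dyadic cubes shrinking to $x$, with no need to pass to balls.

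\end{document}
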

	
	Our final result for this class of Hausdorff contents is the John-Nirenberg inequality established in our next theorem.
	\begin{thmprime}\label{jn_content}
		Suppose $\varphi : [0,\infty] \to [0,\infty]$ is a monotone increasing function such that $\lim_{t \to 0^+} \varphi(t)=0$, 
		and let $\cont$ be the dyadic content associated to $\varphi$ given in \eqref{dyadiccontent}.  Let $Q_0\in\Dq$. Then there exist constants $c,C>0$ such that
		\begin{align}\label{jninequality_content_prime}
			\cont\left(\{x\in Q^\prime:
			|f(x)-c_{Q^\prime}|>t\}\right) \leq C \cont(Q^\prime) \exp(-ct/\|f\|_{BMO^{\cont}(Q_0)}),
		\end{align}
		for every $t>0$, $f \in BMO^{\cont}(Q_0)$, all finite subcubes $Q^\prime\in \D(Q_0)$, and where 
			$$c_{Q^\prime}= \argmin_{c \in \mathbb{R}} \frac{1}{\cont(Q^\prime)}  \int_{Q^\prime}  |f-c| \dvv.$$
	\end{thmprime}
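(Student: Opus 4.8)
The plan is to run the classical iterated Calder\'on--Zygmund argument for the John--Nirenberg inequality, with Theorem~\ref{cz} as the one substantial tool and the packing structure of $\cont$ (the mechanism already underlying Theorem~\ref{cz}) standing in for the additivity identities of the linear theory. Since both sides of \eqref{jninequality_content_prime} are positively homogeneous in $f$ --- the minimizer $c_{Q'}$ and the seminorm $\|\cdot\|_{BMO^{\cont}(Q_0)}$ both rescale linearly --- we may assume $\|f\|_{BMO^{\cont}(Q_0)}=1$; the case $\|f\|_{BMO^{\cont}(Q_0)}=0$ forces $f=c_{Q_0}$ $\cont$-quasi everywhere on $Q_0$, so \eqref{jninequality_content_prime} is trivial there. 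Fix a finite $Q'\in\D(Q_0)$, and for a finite dyadic cube $R\subseteq Q'$ write $g_R:=f-c_R$, so that $\frac{1}{\cont(R)}\int_R|g_R|\dvv\le\|f\|_{BMO^{\cont}(Q_0)}=1$ by the minimality defining $c_R$.

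The basic step is to apply Theorem~\ref{cz} to $g_R$ on $R$ at a stopping level $\Lambda_0$, an absolute constant fixed below; this is legitimate since $\frac{1}{\cont(R)}\int_R|g_R|\dvv\le 1\le\Lambda_0$. It produces pairwise non-overlapping dyadic cubes $\{R_i\}\subset R$ with $\Lambda_0<\frac{1}{\cont(R_i)}\int_{R_i}|g_R|\dvv\le M_0\Lambda_0$ and $|g_R|\le\Lambda_0$ $\cont$-quasi everywhere on $R\setminus\bigcup_i R_i$. From this I extract two consequences. First, summing and invoking the packing property of $\cont$ for the Calder\'on--Zygmund cubes in the form $\sum_i\int_{R_i}|g_R|\dvv\le C_P\int_R|g_R|\dvv$, one gets $\sum_i\cont(R_i)<\frac{1}{\Lambda_0}\sum_i\int_{R_i}|g_R|\dvv\le\frac{C_P}{\Lambda_0}\cont(R)$, so fixing once and for all $\Lambda_0:=2C_P$ yields $\sum_i\cont(R_i)\le\tfrac12\cont(R)$. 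Second, $|c_{R_i}-c_R|\le A\Lambda_0$ with $A:=4M_0$: indeed, the integrand being constant on $R_i$, $|c_{R_i}-c_R|\,\cont(R_i)=\int_{R_i}|c_{R_i}-c_R|\dvv\le 2\int_{R_i}|f-c_R|\dvv+2\int_{R_i}|f-c_{R_i}|\dvv\le 4M_0\Lambda_0\,\cont(R_i)$, using the triangle inequality together with the elementary quasi-subadditivity $\int(u+v)\dvv\le 2\int u\dvv+2\int v\dvv$ of the Choquet integral (immediate from the layer-cake formula and countable subadditivity of $\cont$), then the upper Calder\'on--Zygmund bound and the minimality $\int_{R_i}|f-c_{R_i}|\dvv\le\int_{R_i}|f-c_R|\dvv$.

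Now iterate: let $\{Q'\}$ be generation $0$ and, inductively, let generation $N$ consist of all Calder\'on--Zygmund cubes of $g_R$ in $R$ as $R$ ranges over generation $N-1$. These are pairwise non-overlapping finite dyadic subcubes of $Q'$; their union $\Omega_N$ satisfies $Q'=\Omega_0\supseteq\Omega_1\supseteq\cdots$, and if $b_N$ denotes the sum of $\cont(R)$ over the generation-$N$ cubes then the first consequence gives $b_N\le\tfrac12 b_{N-1}$, hence $\cont(\Omega_N)\le b_N\le 2^{-N}\cont(Q')$. On the shell $\Omega_{N-1}\setminus\Omega_N$, $\cont$-quasi every point $x$ lies in a chain $R_{N-1}\subset\cdots\subset R_1\subset Q'$ of successive Calder\'on--Zygmund cubes with $x\notin\Omega_N$, so $|f(x)-c_{R_{N-1}}|\le\Lambda_0$, while telescoping the second consequence gives $|c_{R_{N-1}}-c_{Q'}|\le(N-1)A\Lambda_0$; since $A\ge 1$ this yields $|f(x)-c_{Q'}|\le NA\Lambda_0=:N\kappa$. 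As $Q'$ is, modulo a $\cont$-null set, the disjoint union of the shells $\Omega_{i-1}\setminus\Omega_i$ $(1\le i\le N)$ together with $\Omega_N$, we conclude $\{x\in Q':|f(x)-c_{Q'}|>N\kappa\}\subseteq\Omega_N$ up to a $\cont$-null set, whence $\cont(\{x\in Q':|f-c_{Q'}|>N\kappa\})\le 2^{-N}\cont(Q')$. For arbitrary $t>0$: if $t<\kappa$, then $\cont(\{\cdots>t\})\le\cont(Q')$ already gives \eqref{jninequality_content_prime} after enlarging $C$; if $t\ge\kappa$, take $N:=\lfloor t/\kappa\rfloor\ge 1$ to get $\cont(\{|f-c_{Q'}|>t\})\le 2^{-N}\cont(Q')\le 2\exp(-(\log 2)\,t/\kappa)\,\cont(Q')$. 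Since $\kappa=8M_0C_P$ is absolute, this is \eqref{jninequality_content_prime} with $c=(\log 2)/\kappa$ and $C=2$ under the normalization $\|f\|_{BMO^{\cont}(Q_0)}=1$; rescaling $f$ (and noting that $c_{Q'}$ rescales accordingly) removes it.

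The only non-classical point --- and where I expect the real work to lie --- is the first consequence of the basic step: in the linear theory one uses the identity $\sum_i\int_{R_i}|g|=\int_{\bigcup_i R_i}|g|\le\int_R|g|$, and here it must be replaced by the packing inequality $\sum_i\int_{R_i}|g|\dvv\le C_P\int_R|g|\dvv$ for the Calder\'on--Zygmund cubes, which is precisely the structural phenomenon this paper isolates (and which is built into the proof of Theorem~\ref{cz}). Its availability is also what forces the stopping level to be taken comparable to $C_P$ (here $\Lambda_0=2C_P$), so that the masses $b_N$ still contract geometrically; granting this input, the remaining ingredients --- Choquet quasi-subadditivity, the telescoping of the constants $c_R$, and the discrete-to-continuous passage in $t$ --- are routine.
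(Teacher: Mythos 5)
Your strategy --- iterated Calder\'on--Zygmund decomposition at a fixed level $\Lambda_0$, geometric contraction of $b_N=\sum\cont(\text{generation-}N\text{ cubes})$, telescoping control on the constants $c_R$, and a discrete-to-continuous passage in $t$ --- is a legitimate route to a John--Nirenberg inequality, and it differs in packaging from the paper's, which instead derives a self-improving inequality $F(t)\le C'F(t-c's)/s$ for the optimal distribution function $F$ and invokes the abstract exponential-decay Lemma~\ref{exponentialestimate}. Your telescoping bound $|c_{R_i}-c_R|\le A\Lambda_0$ and the shell bookkeeping are fine. However, there is a genuine gap precisely at the step you flag: the inequality $\sum_i\int_{R_i}|g_R|\dvv\le C_P\int_R|g_R|\dvv$ for the raw Calder\'on--Zygmund family $\{R_i\}$ is not established anywhere in the paper and does not follow from the available tools. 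Proposition~\ref{deco-cube-int} is \emph{conditional}: it requires the measure packing hypothesis $\sum_{Q_j\subset Q'}\cont(Q_j)\le A_0\cont(Q')$ for all $Q'\in\Dq$, and CZ cubes are not known to satisfy this a priori (indeed, if they did, the Melnikov extraction of Proposition~\ref{decom-cubes} would be superfluous). Since $\cont$ is not additive, $\sum_i\cont(R_i)$ can in principle vastly exceed $\cont(\bigcup_i R_i)\le\cont(R)$, and the CZ stopping-time structure alone gives no control on this sum; your derived contraction $b_N\le\tfrac12 b_{N-1}$ therefore rests on an unproved lemma. The inequality is emphatically \emph{not} ``built into the proof of Theorem~\ref{cz}'': that theorem controls local averages over individual $R_i$ but says nothing about $\sum_i\cont(R_i)$.

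The paper's own proof of Lemma~\ref{aux_lemma} shows the required repair. After producing the CZ cubes $\{Q_j\}$, one first applies Proposition~\ref{decom-cubes} to extract a subfamily $\{Q_{j_k}\}$ satisfying the measure packing condition with constant $2$, together with non-overlapping ancestors $\{\tilde Q_k\}$; the integral packing of Proposition~\ref{deco-cube-int} is then applied to the subfamily \emph{only}, and the ancestors' contributions are absorbed via property (3) of the extraction, $\cont(\tilde Q_k)\le\sum_{Q_{j_i}\subset\tilde Q_k}\cont(Q_{j_i})$. To make your iteration work you would need generation $N$ to consist of the extracted subfamily together with the ancestors (not the raw CZ cubes), verify the $c_R$-telescoping for the ancestors as well --- this does work, since proper ancestors of CZ cubes have average $\le\Lambda_0$ by maximality, which is exactly how the paper gets $|c_{\tilde Q_k}|\le c's$ --- and re-check the shell inclusion $\{|f-c_{Q'}|>N\kappa\}\subseteq\Omega_N$. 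These changes are feasible but not routine; as written, the proof's central quantitative claim is not established.
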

	\noindent
	Here $f \in BMO^{\cont}(Q_0)$ means that
	\begin{align*}
		\|f\|_{BMO^{\cont}(Q_0)} := \|\mathcal{M}^{\#}_{\cont} f \|_{L^\infty(Q_0)}<+\infty
	\end{align*}
	for
	\begin{align}\label{cap-sharp-maximal}
		\mathcal{M}^{\#}_{\cont} f(x):= \sup_{x \in Q' \in \D(Q_0)} \inf_{c \in \mathbb{R}} \frac{1}{\cont(Q')}\int_{Q'} |f(y)-c|\dvv
	\end{align}
	the $\cont$-sharp maximal function (which was implicitly introduced in \cite{ChenSpector} and further studied in \cite{ChenClaros} in the case $\varphi(t)=t^\beta$).	
	
	For the spherical Hausdorff contents $\mathcal{H}^\beta_\infty$, the conclusions of Theorems \ref{weak-type-cont-maximal}, \ref{Lebesgue-diff-cont}, \ref{cz}, and \ref{jn_content} are known from the results established in \cite{ChenSpector, ChenOoiSpector}.
	Given that the results hold for the $\beta$-dimensional Hausdorff content, it is not so surprising that they can be shown to hold for the class of translation invariant contents defined in the same spirit.  One may wonder, however, whether the theory in general, or at least some aspects of it, can be developed for general outer capacities.  An examination of the proofs of the preceding results suggest that an important ingredient is a packing condition enjoyed by the Hausdorff contents defined via \eqref{dyadiccontent}.  This packing condition implies quasi-additivity of the integral for certain families of cubes, which serves as a substitute for linearity in the nonlinear capacitary integrals.  This motivates us to introduce this packing condition as follows:
	\begin{assumption}[P]\label{int-cap}
		For an outer measure $\C$, we say that $\C$ satisfies the packing assumption if there exists a constant $A_{0}\geq 1$ such that for every $Q^\prime\in \Dq$ and every non-overlapping collection~$\left\{Q_{j}: Q_{j} \subset Q^\prime\right\}$ which satisfies  
		\begin{equation*}
			\sum_{Q_{j} \subset Q^\prime} \C\left(Q_{j}\right) \leq A_{0} \, \C(Q^\prime),
		\end{equation*}	
		one has
		\begin{equation*}
			\sum_{j} \int_{Q_{j}} f \dc \leq A_0\int_{\cup_{j} Q_{j}} f \dc
		\end{equation*}
		for every function $f:\rn \rightarrow [0,\infty]$.	\end{assumption}

	With this packing assumption, we establish the following theorem which asserts bounds for a dyadic analogue of the capacitary maximal function.  In particular, the following results are true not just for outer capacities, but for general outer measures -- set functions which satisfy $(i), (ii)$, and $(iii)$ in the definition of outer capacity.
	\begin{thma}\label{max-new} 
		Suppose $\C$ is an outer measure which satisfies (P) and let $f:\mathbb{R}^n \to \mathbb{R}$.		\begin{itemize}
			\item[(i)] Then for every $t>0$, there exists a constant $C>0$ such that 
			\begin{align*}
				\C \left(\left\{ x\in \rn \, :\, M^d_{\C} f(x) >t\right\}\right) \leq \frac{C}{t} \int_{\mathbb{R}^n} |f|\dc.
			\end{align*}

			\item[(ii)] If $1<p<\infty$, then there exists a constant $C>0$ such that 
			\begin{align*}
				\int_{\rn} (M^d_{\C} f)^p  \dc \leq C \int_{\mathbb{R}^n} |f|^p \dc.
			\end{align*}
		\end{itemize}
	\end{thma}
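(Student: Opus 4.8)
The plan is to run the classical proof of the weak-type $(1,1)$ estimate for the dyadic maximal function, using the packing assumption (P) as a surrogate for the countable additivity of the underlying measure, and then to deduce (ii) from (i) by a Choquet-integral version of Marcinkiewicz interpolation. We may assume $\int_{\rn}|f|\dc<\infty$ for (i) and $\int_{\rn}|f|^{p}\dc<\infty$ for (ii), the asserted inequalities being otherwise trivial. For (i) it suffices to prove the \emph{local} bound
\begin{equation*}
	\C\bigl(\{x\in Q'\,:\, M^{d}_{\C,Q'}f(x)>t\}\bigr)\ \le\ \frac{2A_{0}}{t}\int_{Q'}|f|\dc ,\qquad Q' \text{ a bounded cube in }\Dq,
\end{equation*}
where $A_{0}\ge1$ is the constant of (P) and $M^{d}_{\C,Q'}f(x):=\sup\{\C(Q'')^{-1}\int_{Q''}|f|\dc:\ x\in Q''\subseteq Q',\ Q''\in\Dq\}$ is the maximal function over subcubes of $Q'$; the passage to the global statement --- summing over the finitely many unbounded maximal elements of $\Dq$ and exhausting each of these by bounded dyadic subcubes --- is routine.

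Fix such a $Q'$, and let $\{Q_{j}\}$ be the maximal (for inclusion) cubes $Q_{j}\in\Dq$ with $Q_{j}\subseteq Q'$ and $\int_{Q_{j}}|f|\dc>t\,\C(Q_{j})$ --- the ``bad'' cubes. These exist because ascending chains of dyadic cubes below $Q'$ are finite, they are pairwise non-overlapping, and they cover $\{x\in Q':M^{d}_{\C,Q'}f(x)>t\}$. If $Q'$ is itself bad the bound is immediate, so assume each $Q_{j}\subsetneq Q'$, and set $\gamma:=A_{0}t^{-1}\int_{Q'}|f|\dc$. The key point is a dichotomy. First, since $Q_{j}\subseteq Q'$, monotonicity of the Choquet integral and $A_{0}\ge1$ give $\C(Q_{j})<t^{-1}\int_{Q_{j}}|f|\dc\le\gamma$ for every $j$. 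Second, for every subfamily $F\subseteq\{Q_{j}\}$ with $\sum_{Q_{j}\in F}\C(Q_{j})\le A_{0}\C(Q')$, the assumption (P) applied to the non-negative function $|f|$ yields $\sum_{Q_{j}\in F}\int_{Q_{j}}|f|\dc\le A_{0}\int_{\cup_{F}Q_{j}}|f|\dc\le A_{0}\int_{Q'}|f|\dc$, so that $\sum_{Q_{j}\in F}\C(Q_{j})<t^{-1}\sum_{Q_{j}\in F}\int_{Q_{j}}|f|\dc\le\gamma$. Now if $\sum_{j}\C(Q_{j})\le A_{0}\C(Q')$, the second observation with $F$ the full family gives $\C(\cup_{j}Q_{j})\le\sum_{j}\C(Q_{j})<\gamma$, which is the desired bound. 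If instead $\sum_{j}\C(Q_{j})>A_{0}\C(Q')$, enumerate the $Q_{j}$ and take $m$ least with $S_{m}:=\sum_{i\le m}\C(Q_{j_{i}})>A_{0}\C(Q')$; then $S_{m-1}\le A_{0}\C(Q')$, so $S_{m-1}<\gamma$ by the second observation, and $S_{m}=S_{m-1}+\C(Q_{j_{m}})<2\gamma$, since both summands are $<\gamma$. Hence $A_{0}\C(Q')<S_{m}<2\gamma$, i.e. $\C(Q')<2t^{-1}\int_{Q'}|f|\dc$, and a fortiori $\C(\cup_{j}Q_{j})\le\C(Q')<2t^{-1}\int_{Q'}|f|\dc$. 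In either case (i) follows.

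For (ii), I would interpolate between (i) and the trivial bound $M^{d}_{\C}f\le\|f\|_{L^{\infty}}$ (immediate from $\int_{Q''}|f|\dc\le\|f\|_{L^{\infty}}\,\C(Q'')$). Writing $\int_{\rn}g^{p}\dc=p\int_{0}^{\infty}\tau^{p-1}\C(\{g>\tau\})\,d\tau$ for $g=M^{d}_{\C}f$, decomposing $f=f\mathbf{1}_{\{|f|\le t/2\}}+f\mathbf{1}_{\{|f|>t/2\}}$ into functions with disjoint supports --- on which $M^{d}_{\C}$ is subadditive, by the countable subadditivity of $\C$ --- using $\|f\mathbf{1}_{\{|f|\le t/2\}}\|_{L^{\infty}}\le t/2$ to obtain $\{M^{d}_{\C}f>t\}\subseteq\{M^{d}_{\C}(f\mathbf{1}_{\{|f|>t/2\}})>t/2\}$, then invoking (i), expanding $\int_{\rn}|f|\mathbf{1}_{\{|f|>t/2\}}\dc$ once more by the layer-cake formula, and finishing with Tonelli's theorem for the ensuing ordinary double Lebesgue integral in the two amplitude variables, one arrives at $\int_{\rn}(M^{d}_{\C}f)^{p}\dc\le C_{p}\int_{\rn}|f|^{p}\dc$ with $C_{p}$ of order $p/(p-1)$; no non-linearity intervenes beyond the use of (i).

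The genuinely delicate step is the summation in (i). For a $\sigma$-additive $\C$ one would simply write $\C(\cup_{j}Q_{j})=\sum_{j}\C(Q_{j})<t^{-1}\sum_{j}\int_{Q_{j}}|f|\dc=t^{-1}\int_{\cup_{j}Q_{j}}|f|\dc\le t^{-1}\|f\|_{L^{1}(\C)}$; for the Choquet integral neither identity holds, and one cannot rescue this by applying (P) to the whole family $\{Q_{j}\}$, because its hypothesis $\sum_{j}\C(Q_{j})\le A_{0}\C(Q')$ may genuinely fail --- already for $\C=\mathcal H^{\beta}_{\infty}$ with $\beta<n$, the stopping procedure can select so many small cubes that $\sum_{j}\C(Q_{j})$ is arbitrarily large. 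The dichotomy is precisely what circumvents this: if the packing hypothesis does hold for the whole family, (P) closes the estimate; and if it fails, then --- each individual $\C(Q_{j})$ being at most $\gamma$ --- the partial sums can only overshoot the level $A_{0}\C(Q')$ by a controlled amount, which forces $\int_{Q'}|f|\dc$ to be already comparable to $t\,\C(Q')$, making the estimate trivial. The remaining, more routine, point of care is the reduction of the global statement to the local one across the finitely many unbounded maximal cubes of $\Dq$.
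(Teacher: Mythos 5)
Your dichotomy argument in (i) is a genuinely different route from the paper's. The paper first applies the Melnikov-type packing lemma (Proposition \ref{decom-cubes}) to the family of maximal bad cubes to extract a $2$-packed subfamily $\{Q_{j_k}\}$ together with a family of ancestors $\{\tilde{Q}_k\}$, uses the ancestor property to absorb $\sum_k\C(\tilde{Q}_k)$ into $\sum_k\C(Q_{j_k})$, and then invokes (P) on the well-packed subfamily. You instead observe that every stopping cube satisfies $\C(Q_j)<\gamma:=A_0 t^{-1}\int_{Q'}|f|\dc$, and split into two cases: if the full family is $A_0$-packed, (P) applies directly; if not, the first partial sum $S_m$ to overshoot $A_0\C(Q')$ does so by less than $\gamma$, forcing $\C(Q')<2t^{-1}\int_{Q'}|f|\dc$, and then $\C(\cup_j Q_j)\le\C(Q')$ closes the case. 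This is shorter, bypasses Proposition \ref{decom-cubes} entirely, and gives the bound with constant $2A_0$ in place of the paper's $A_0$ (or $2$, depending on how the packing constant is chosen). Both approaches rest on the same two pillars --- the stopping construction and the packing assumption --- but you dispense with the intermediate extraction of a packed subfamily, which I find genuinely illuminating.

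There is, however, a real gap in the global passage, and it is not ``routine'' as you claim. You propose to exhaust each unbounded quadrant $Q^{*}$ by bounded dyadic cubes $Q'_{m}\uparrow Q^{*}$ and pass to the limit. This requires $\C\bigl(\bigcup_{m} E_{m}\bigr)=\lim_{m}\C(E_{m})$ for the increasing sets $E_{m}=\{x\in Q'_{m}:M^{d}_{\C,Q'_{m}}f>t\}$, i.e. continuity from below of $\C$ --- which is neither a hypothesis of the theorem nor a general feature of outer measures. The paper avoids this by running the packing argument directly on the global family of maximal bad cubes. The simplest repair of your argument is to run the dichotomy with $Q'=Q^{*}$ directly: the quadrants belong to $\Dq$, so (P) is available for them, $\gamma=A_0t^{-1}\int_{Q^{*}}|f|\dc$ is still finite, and when $\C(Q^{*})=\infty$ only Case 1 can occur (the packing hypothesis $\sum\C(Q_j)\le A_0\C(Q^{*})$ is vacuously true), while for $\C(Q^{*})<\infty$ the dichotomy is unchanged. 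You would then need the maximal bad cubes inside $Q^{*}$ to exist --- that ascending chains of bad cubes inside a good quadrant terminate --- a point which the paper's decomposition $\{M^{d}_{\C}f>t\}=\bigcup_{i}Q_{i}$ into maximal bad cubes also relies on without comment; so this part of the concern is shared, not specific to your proof. Part (ii) is the standard Marcinkiewicz interpolation; the paper outsources it to a lemma from the cited reference, and your worked-out version (using disjoint-support subadditivity of the Choquet integral, which only needs countable subadditivity of $\C$, and then Tonelli in the double Lebesgue integral of scalars) is correct and in the same spirit.
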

	\noindent
	Here $ M^d_{\C}f$ is the dyadic maximal function associated with $\C$ subordinate to $Q$, i.e., 
	\begin{align}\label{max-dyadic}
		M^d_{\C}f(x) : = \sup_{Q' \in \mathcal{D}(Q)} \chi_{Q' } (x) \frac{1}{\C(Q')} \int_{Q'} |f| \dc.
	\end{align}

	From this one deduces a corresponding differentiation theorem:
	
	\begin{thma}\label{Lebesgue-differentiation-new-dyadic}
		Suppose $\C$ is an outer measure and satisfies (P). For $f \in L^1(\mathbb{R}^n;\C)$, one has 
		\begin{align}\label{diff-Thm-dyadic}
			\lim_{Q' \to x} \frac{1}{\C(Q')} \int_{Q'} |f- f(x)| \dc = 0,
		\end{align}
		for $\C$ -quasi every $x\in \mathbb{R}^n$.  Here, the limit is taken over the dyadic tower of cubes $Q'\in \Dq$ containing $x$ that shrink to $x$. 	
	\end{thma}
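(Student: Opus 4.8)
The plan is to run the standard density argument for differentiation theorems, with the quasi-subadditivity of the Choquet integral (a consequence of the finite subadditivity of $\C$) playing the role usually played by linearity, and with Theorem~\ref{max-new}(i) supplying the crucial weak-type $(1,1)$ bound. First I would discard the $\C$-null set of points $x$ for which some cube of the dyadic tower at $x$ carries zero $\C$-measure (a null set by countable subadditivity), and for the remaining $x$ introduce the maximal oscillation
\begin{align*}
	\Omega f(x) := \limsup_{Q' \to x} \frac{1}{\C(Q')} \int_{Q'} |f - f(x)| \dc,
\end{align*}
the limit being along the dyadic tower at $x$. Since $\{\Omega f > 0\} = \bigcup_{m \in \N}\{\Omega f > 1/m\}$, countable subadditivity reduces \eqref{diff-Thm-dyadic} to proving $\C(\{\Omega f > \lambda\}) = 0$ for each fixed $\lambda > 0$.

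Next I would handle the continuous case: if $g$ is bounded and continuous then $\frac{1}{\C(Q')}\int_{Q'}|g-g(x)|\dc \le \sup_{y\in Q'}|g(y)-g(x)|$, since the Choquet integral over $Q'$ of the constant $\sup_{Q'}|g-g(x)|$ equals that constant times $\C(Q')$; as the cubes of the tower at $x$ shrink to $\{x\}$ their diameters tend to $0$, so continuity of $g$ at $x$ gives $\Omega g(x)=0$ for every $x$. For a general $f \in L^1(\rn;\C)$ and $\eps>0$ I would pick, using the definition of $L^1(\rn;\C)$, a bounded continuous $g$ with $\int_{\rn}|f-g|\dc < \eps$, and then combine the pointwise bound $|f-f(x)| \le |f-g| + |g-g(x)| + |g(x)-f(x)|$ with the elementary inequality $\int_E (h_1+h_2+h_3)\dc \le 3\sum_{i=1}^3 \int_E h_i \dc$, valid for nonnegative $h_i$ by the layer-cake formula and finite subadditivity of $\C$. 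Dividing by $\C(Q')$, passing to the limit along the tower, and using the continuous case yields, for every $x$ outside the discarded null set,
\begin{align*}
	\Omega f(x) \le 3\,M^d_{\C}(f-g)(x) + 3\,|f(x) - g(x)|.
\end{align*}

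Finally I would close the estimate. From the last display,
\begin{align*}
	\{\Omega f > \lambda\} \subset \Bigl\{ M^d_{\C}(f-g) > \tfrac{\lambda}{6} \Bigr\} \cup \Bigl\{ |f-g| > \tfrac{\lambda}{6} \Bigr\},
\end{align*}
and Theorem~\ref{max-new}(i) bounds the $\C$-measure of the first set by $\tfrac{6C}{\lambda}\int_{\rn}|f-g|\dc$, while Chebyshev's inequality for the Choquet integral (the bound $s\,\C(\{h>s\}) \le \int h \dc$, immediate from monotonicity of $\C$ and the layer-cake formula) bounds the second by $\tfrac{6}{\lambda}\int_{\rn}|f-g|\dc$. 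Hence $\C(\{\Omega f > \lambda\}) \le \tfrac{C'}{\lambda}\,\eps$, and letting $\eps \to 0$ gives $\C(\{\Omega f>\lambda\})=0$, which proves \eqref{diff-Thm-dyadic}.

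The main subtlety, and the only place where the capacitary setting differs from the classical one, is the non-linearity of the capacitary integral, which forbids an additive splitting of $\int_{Q'}|f-f(x)|\dc$: this is circumvented by splitting into a \emph{fixed} number of pieces, for which finite subadditivity of $\C$ already suffices, so that the packing assumption (P) enters only through the weak-type bound of Theorem~\ref{max-new}. Apart from this, the argument is routine once Theorem~\ref{max-new} is in hand.
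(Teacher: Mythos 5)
Your proof is correct and follows essentially the same strategy as the paper's: reduce via countable subadditivity to showing $\C(\{\Omega f > \lambda\}) = 0$ for each $\lambda>0$, approximate $f$ by bounded continuous functions using the density supplied by Remark~\ref{density_lp}, split by the triangle inequality, observe that the continuous term vanishes in the limit, and close the estimate with the weak-type bound of Theorem~\ref{max-new}(i) plus Chebyshev for the Choquet integral.

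One point where you are actually slightly more careful than the paper's proof: you replace the additive splitting of the Choquet integral by the quasi-additive bound $\int(h_1+h_2+h_3)\dc \le 3\sum_{i=1}^3 \int h_i\dc$, justified only by finite subadditivity of $\C$ and the layer-cake formula. This is the right thing to do here. Theorem~\ref{Lebesgue-differentiation-new-dyadic} is stated for a general outer measure satisfying (P), and for such a set function the Choquet integral need not be sublinear — sublinearity is equivalent to \emph{strong} subadditivity of the underlying set function (Proposition~\ref{strong-subadditivity} and the discussion around \eqref{sub-lin} apply to the contents $\contH$, not to arbitrary outer measures). The paper's proof splits the integral additively with no constant, which is justified for the contents but not, a priori, for the general outer measure in the hypothesis. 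Since the final conclusion is that a set has $\C$-measure zero, the factor of $3$ is harmless and both arguments reach the same conclusion; still, your version avoids a small unjustified step.
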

	
	As in the case of Radon measures, it is natural then to introduce the notion of doubling for a capacity and ask whether one has analogues of Theorems \ref{weak-type-cont-maximal}, \ref{Lebesgue-diff-cont}, \ref{cz}, and \ref{jn_content}.  To this end, we say that an outer measure or outer capacity is doubling if there exists a universal constant $D_0>0$ such that
	\begin{align}\label{global_doubling}
		\C(B(x,2r)) \leq D_0 \C(B(x,r)).
	\end{align}
	We will show that one does indeed have such a theory, whenever one assumes the capacity is doubling and satisfies the packing assumption (P).  This theory depends on a somewhat surprising result concerning the seemingly unassuming packing assumption (P).  To introduce this result, for an outer capacity $\C$, we define the Hausdorff content induced by $\C$ via the formula
	\begin{align}\label{capacity_dyadiccontent}
		\contC(E):= \inf \left\{ \sum_{i} \C(Q_i): E\subset \bigcup_{i} Q_i, Q_i\in \D(Q) \right\}.
	\end{align}
	Then our next main result is the following theorem, which characterizes outer capacities in terms of their induced contents.

	\begin{theorem1}\label{com-cap}
		For an outer capacity $\C$, the following are equivalent.
		\begin{itemize}
			\item[(i)] $\C$ satisfies (P).
			\item[(ii)]  For all $E\subset\rn$ one has
			\begin{align*}
				\frac{1}{4} \mathcal{H}^\C_\infty(E)\leq \C(E)\leq \mathcal{H}^\C_\infty(E).
			\end{align*}	
		\end{itemize}
	\end{theorem1}
	Theorem \ref{com-cap} asserts that in the class of outer capacities, up to equivalence as set functions, the induced Hausdorff contents \eqref{capacity_dyadiccontent} are the only set functions which satisfy the packing assumption.  As we verify below in Section \ref{sec:preliminaries}, $\contC$ possesses many of the same structural properties as the Hausdorff contents defined via \eqref{dyadiccontent}, and therefore Theorem \ref{com-cap} and some additional analysis in the spirit of Theorems \ref{weak-type-cont-maximal}, \ref{Lebesgue-diff-cont}, \ref{cz}, and \ref{jn_content} allow us to extend these results to the class of outer capacities which satisfy our packing assumption (P) and whose induced content is doubling.  Note that while doubling did not appear explicitly in the statements of the preceding theorems, it is implicit because of the translation invariance of the contents \eqref{dyadiccontent}, see Lemma \ref{content_doubling_lemma} below in Section \ref{Max-Diff-CZ}.
	
	Precisely, we have the following results for general outer capacities.  Our first result is the following capacitary maximal theorem. 
	\begin{thm}\label{maxi-esti-new}
		Suppose $\C$ is an outer capacity which satisfies (P) and \eqref{global_doubling}, and let $f:\mathbb{R}^n \to \mathbb{R}$.
		\begin{itemize}
			\item[(i)] Then for every $t>0$, there exists a constant $C>0$ such that 
			\begin{align*}
				\C \left(\left\{ x\in \rn \, :\, \mathcal{M}_{\C} f(x) >t\right\}\right) \leq \frac{C}{t} \int_{\mathbb{R}^n} |f|\dc.
			\end{align*}
			
			\item[(ii)] If $1<p<\infty$, then there exists a constant $C>0$ such that 
			\begin{align*}
				\int_{\rn} (\mathcal{M}_{\C} f)^p  \dc \leq C \int_{\mathbb{R}^n} |f|^p \dc.
			\end{align*}
		\end{itemize}		
	\end{thm}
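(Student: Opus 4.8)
The plan is to control the centered capacitary maximal function $\mathcal{M}_{\C}$ pointwise by a maximum of finitely many \emph{dyadic} maximal functions, each of which is handled by Theorem~\ref{max-new}, with the doubling hypothesis \eqref{global_doubling} used to pass between balls and dyadic cubes of comparable size and Theorem~\ref{com-cap} used to propagate the packing assumption (P) to the auxiliary grids that this requires. Part (ii) then follows from part (i) together with the trivial bound $\mathcal{M}_{\C} f \le \|f\|_{L^\infty(\rn)}$ by interpolation, or, more directly, from the strong-type half of Theorem~\ref{max-new} applied to each auxiliary grid together with the finite subadditivity of $\C$.

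Write $\ell$ for the side length of the reference cube $Q$ and set $W:=\{0,\ell/3,2\ell/3\}^n$. The first ingredient is the one-third trick: for every ball $B(x,r)$ there are $w\in W$ and a cube $Q^\ast\in\D(Q+w)$ with $x\in Q^\ast$, $B(x,r)\subset Q^\ast$, and $\ell(Q^\ast)$ comparable to $r$. Indeed, fixing the dyadic generation of side length in $[6r,12r)$, the hyperplanes of that generation belonging to the three shifts of $\D(Q)$ are, in each coordinate, mutually displaced by $\ell(Q^\ast)/3\ge 2r$, so in each coordinate one may choose $w_i$ for which $x_i$ lies at distance $\ge r$ from all of them, which forces $B(x,r)$ into a single such cube $Q^\ast$. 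Since $Q^\ast\subset B(x,C_n r)$, the doubling hypothesis \eqref{global_doubling} gives $\C(Q^\ast)\le C\,\C(B(x,r))$, and hence
\begin{align*}
	\frac{1}{\C(B(x,r))}\int_{B(x,r)}|f|\dc \;\le\; \frac{C}{\C(Q^\ast)}\int_{Q^\ast}|f|\dc \;\le\; C\,M^{d,w}_{\C}f(x),
\end{align*}
where $M^{d,w}_{\C}$ denotes the dyadic maximal function \eqref{max-dyadic} associated with the grid $\D(Q+w)$. Taking the supremum over $r>0$ and the maximum over $w\in W$ yields the pointwise bound $\mathcal{M}_{\C}f(x)\le C\max_{w\in W}M^{d,w}_{\C}f(x)$.

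The second, decisive, ingredient is that $\C$ satisfies (P) with respect to \emph{each} grid $\D(Q+w)$, $w\in W$, so that Theorem~\ref{max-new}, applied with the cube $Q+w$ in place of $Q$, provides the weak-$(1,1)$ and strong-$(p,p)$ bounds for each $M^{d,w}_{\C}$. Here Theorem~\ref{com-cap} is used in both directions. Since $\C$ satisfies (P) for $\D(Q)$ by hypothesis, Theorem~\ref{com-cap} gives $\tfrac14\contC(E)\le\C(E)\le\contC(E)$, with $\contC$ induced via $\D(Q)$. On the other hand, the content $\mathcal{H}^{\C,w}_\infty$ induced via $\D(Q+w)$ is comparable to $\contC$: each cube of $\D(Q+w)$ is covered by at most $2^n$ cubes of $\D(Q)$ of the same side length, and by \eqref{global_doubling} the values of $\C$ on two cubes of a common side length $s$ lying within a bounded number of steps of one another differ by at most a fixed power of $D_0$ (each such cube sits in a fixed dilate of a ball concentric with the inscribed ball of the other); passing to infima over covers gives $\mathcal{H}^{\C,w}_\infty\approx\contC\approx\C$. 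Applying Theorem~\ref{com-cap} in the converse direction, now with the cube $Q+w$, shows that $\C$ satisfies (P) for $\D(Q+w)$. Combining this with the pointwise domination of the previous paragraph and the finite subadditivity of $\C$ for the level-set estimate completes the proof of (i), and (ii) follows as indicated.

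The main obstacle is precisely this second ingredient. In the proofs of Theorems~\ref{weak-type-cont-maximal}--\ref{jn_content} the translation invariance of the contents \eqref{dyadiccontent} makes the passage to a dyadic model, and to its translates, transparent; for a general doubling outer capacity no such invariance is available, and one must instead exploit the equivalence of $\C$ with its induced content---that is, Theorem~\ref{com-cap}---in tandem with \eqref{global_doubling} in order to move $\C$ across translated cubes and thereby transfer (P) to the shifted grids. Once this is secured, the remainder is the standard reduction of a centered maximal function to a dyadic one, and no further idea is needed.
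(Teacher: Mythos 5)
You take a genuinely different route from the paper.  The paper's proof reduces immediately to the induced content $\contC$ (via Theorem~\ref{com-cap}, since both sides of the desired inequalities are invariant under equivalence of set functions), observes that $\contC$ automatically satisfies (P) for the grid $\D(Q)$ (Proposition~\ref{deco-cube-int}) and is doubling (Lemma~\ref{doubling-contC}), and then compares the centered maximal function with the dyadic one on a \emph{single} fixed grid via a level-set covering argument (Lemma~\ref{equivoftwomaximal-1}, built on Lemma~\ref{content_3cube_estimate}).  You instead dominate $\mathcal{M}_\C f$ pointwise by $\max_{w\in W} M^{d,w}_\C f$ over the $3^n$ shifted grids and apply Theorem~\ref{max-new} on each shift.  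The one-third trick as you set it up is sound (the offsets of generation $k$ scale like $2^k w$ precisely because $\D(Q+w)=\{2^k(Q+w+m)\}$, so the generation-$k$ hyperplanes of the three shifts are displaced by $\ell(Q^\ast)/3$).

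The gap is in your second ingredient.  Having shown $\mathcal{H}^{\C,w}_\infty \approx \contC \approx \C$ with constants that depend on $n$, the covering number $2^n$, and powers of $D_0$, you ``apply Theorem~\ref{com-cap} in the converse direction'' to conclude that $\C$ satisfies (P) for $\D(Q+w)$.  This does not follow: condition (ii) of Theorem~\ref{com-cap} is the \emph{specific} two-sided bound $\tfrac14 \mathcal{H}^\C_\infty(E)\le \C(E)\le \mathcal{H}^\C_\infty(E)$, and the lower constant you get for the shifted content is necessarily strictly smaller than $\tfrac14$ (it picks up at least a factor of $2^{-n}$ from the covering).  Moreover the paper only proves the implication (i)$\Rightarrow$(ii) of Theorem~\ref{com-cap}; the converse direction, which is the one you invoke, is never argued, and chasing constants shows that a mere equivalence $\C\approx\mathcal{H}^{\C,w}_\infty$ does not by itself deliver (P) with matching $A_0$ on both sides.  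The step can be repaired, but not by citing Theorem~\ref{com-cap}: one should bypass (P) for $\C$ on the shifted grids and reduce directly to the content $\mathcal{H}^{\C,w}_\infty$ itself, which \emph{does} satisfy (P) on its own grid by Proposition~\ref{deco-cube-int} (with $Q+w$ in place of $Q$), agrees with $\C$ on cubes of $\D(Q+w)$ by (the shifted analogue of) Proposition~\ref{equality}, and has Choquet integrals comparable to those of $\C$.  Theorem~\ref{max-new} then applies to $\mathcal{H}^{\C,w}_\infty$, and transferring the resulting weak-$(1,1)$ bound back to $\C$ only costs constants.  This is exactly the reduction the paper performs on the single grid $\D(Q)$; your approach simply repeats it on each of the $3^n$ shifts, which is why the paper's level-set comparison within one grid is the more economical choice.
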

	\noindent
	
	Our second result is a Lebesgue differentiation theory for such capacities.
	\begin{thm}\label{Lebesgue-differentiation-new}
		Suppose $\C$ is an outer capacity which satisfies (P) and \eqref{global_doubling}.  For $f \in L^1(\mathbb{R}^n;\C)$ one has		
		\begin{align}\label{diffThm}
			\lim_{r \to 0} \frac{1}{\C(B(x,r))} \int_{B(x,r)} |f- f(x)| \dc = 0,
		\end{align} 
		for $\C$ -quasi every $x\in \mathbb{R}^n$.
	\end{thm}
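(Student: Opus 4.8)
The plan is to run the classical maximal-function proof of the Lebesgue differentiation theorem, with Theorem~\ref{maxi-esti-new}(i) supplying the weak-type $(1,1)$ bound and the density of bounded continuous functions in $L^1(\mathbb{R}^n;\C)$ (immediate from the definition of that space) supplying the approximation. The only point at which the nonlinearity of the Choquet integral intervenes is in splitting $f$ into a continuous part plus a small error, and this is absorbed by the \emph{quasi}-subadditivity of the integral: Theorem~\ref{com-cap} gives $\tfrac14\,\contC(E)\le\C(E)\le\contC(E)$, the Choquet integral is monotone in the underlying set function, and the induced dyadic content satisfies $\int(u+v)\dcontC\le\int u\dcontC+\int v\dcontC$ (a structural property of dyadic contents, recorded in Section~\ref{sec:preliminaries}); combining these yields a constant $A\ge1$ with $\int(u+v)\dc\le A\big(\int u\dc+\int v\dc\big)$ for all nonnegative functions $u,v$. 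Two further elementary consequences of the layer-cake formula for the Choquet integral will be used freely: $\int_E(u+c)\dc=\int_E u\dc+c\,\C(E)$ for a constant $c\ge0$, and Chebyshev's inequality $t\,\C(\{|u|>t\})\le\int|u|\dc$.

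First I would discard the degenerate set $Z:=\{x\in\mathbb{R}^n:\ \C(B(x,r))=0\ \text{for some}\ r>0\}$, which is open and hence a countable union of balls of zero $\C$-capacity, so that $\C(Z)=0$ and $Z$ may be absorbed into the exceptional set. Fix $x\notin Z$ and set
\begin{align*}
\Lambda f(x):=\limsup_{r\to0}\ \frac{1}{\C(B(x,r))}\int_{B(x,r)}|f-f(x)|\dc.
\end{align*}
If $g$ is bounded and continuous then $\frac{1}{\C(B(x,r))}\int_{B(x,r)}|g-g(x)|\dc\le\sup_{y\in B(x,r)}|g(y)-g(x)|\to0$, so $\Lambda g\equiv0$ off $Z$. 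For general $f\in L^1(\mathbb{R}^n;\C)$ and such a $g$, the pointwise estimate $|f-f(x)|\le|f-g|+|g-g(x)|+|g(x)-f(x)|$, integrated over $B(x,r)$ and processed with the quasi-subadditivity and the constant-additivity above, then divided by $\C(B(x,r))$ and sent $r\to0$ (the $|g-g(x)|$ term vanishing), gives
\begin{align*}
\Lambda f(x)\ \le\ A\,\mathcal{M}_{\C}(f-g)(x)+|f(x)-g(x)|.
\end{align*}

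Consequently, for each $t>0$,
\begin{align*}
\{x:\Lambda f(x)>t\}\ \subset\ \Big\{\mathcal{M}_{\C}(f-g)>\tfrac{t}{2A}\Big\}\ \cup\ \Big\{|f-g|>\tfrac{t}{2}\Big\},
\end{align*}
so Theorem~\ref{maxi-esti-new}(i) applied to the first set and Chebyshev to the second give $\C(\{x:\Lambda f(x)>t\})\le\frac{C}{t}\,\|f-g\|_{L^1(\mathbb{R}^n;\C)}$ with $C$ independent of $g$. Since bounded continuous functions are dense in $L^1(\mathbb{R}^n;\C)$, the right-hand side can be made as small as we please, so $\C(\{x:\Lambda f(x)>t\})=0$ for every $t>0$; taking $t=1/k$ and using countable subadditivity yields $\C(\{x:\Lambda f(x)>0\})=0$, which is precisely the assertion of the theorem.

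The main obstacle is the quasi-subadditivity input -- one must be sure that for a general outer capacity satisfying (P) the Choquet integral is subadditive up to a multiplicative constant -- and this is exactly what Theorem~\ref{com-cap} buys, reducing the question to the induced content $\contC$, for which subadditivity of the Choquet integral is a structural fact (to be verified in Section~\ref{sec:preliminaries}); the rest of the argument is routine. Note that the doubling hypothesis \eqref{global_doubling} does not reappear in this proof: it is needed only inside Theorem~\ref{maxi-esti-new}, in passing from the dyadic maximal function of Theorem~\ref{max-new} to the maximal function $\mathcal{M}_{\C}$.
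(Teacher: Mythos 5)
Your proof is correct and follows essentially the same route as the paper's: both hinge on Theorem~\ref{com-cap} to transfer subadditivity of the Choquet integral from the induced content $\contC$ to $\C$, and then run the standard density/triangle-inequality/maximal-function/Chebyshev argument using Theorem~\ref{maxi-esti-new}(i). The only cosmetic difference is that the paper first reduces the whole statement to $\contC$ (for which the Choquet integral is genuinely subadditive) and then invokes the argument of Theorems~\ref{Lebesgue-differentiation-new-dyadic} and~\ref{Lebesgue-diff-cont}, whereas you carry the quasi-subadditivity constant $A$ (equal to $4$) through the estimate directly for $\C$; both are valid and lead to the same proof structure.
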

	Our third result is the following Calder\'on-Zygmund decomposition.  
	\begin{thm}\label{cz_C}
		Suppose $\C$ is an outer capacity that satisfies (P) and \eqref{global_doubling}.    There exists a constant $D>0$ such that for every $Q^\prime \in \Dq$, $f \in L^1(Q^\prime;\C)$, and $\Lambda>0$ which satisfy
		\begin{align*}
			\Lambda \geq \frac{1}{\C(Q^\prime)} \int_{Q^\prime} |f| \dc,
		\end{align*}
		then there exists a countable collection of non-overlapping dyadic cubes $\{ Q_k\}\subset Q^\prime$ subordinate to $Q$ such that 
		\begin{enumerate}
			\item $\Lambda< \frac{1}{\C(Q_k)} \int_{Q_k} |f| \dc \leq D \Lambda$,
			\item $|f(x)| \leq \Lambda$ for $\C$ -quasi everywhere $x\in Q^\prime \setminus \bigcup_k Q_k$.
		\end{enumerate}
	\end{thm}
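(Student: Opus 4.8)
The plan is to reproduce the classical dyadic Calder\'on--Zygmund stopping-time construction, as in Theorem~\ref{cz}, but now adapted to the Choquet setting, with the two hypotheses playing complementary roles: the doubling bound \eqref{global_doubling} will control a dyadic cube against its dyadic parent and give conclusion~(1), while the packing assumption (P) will be used \emph{only} through the dyadic Lebesgue differentiation theorem (Theorem~\ref{Lebesgue-differentiation-new-dyadic}) and will give conclusion~(2). One may alternatively first pass to the induced content $\contC$, which by Theorem~\ref{com-cap} is comparable to $\C$, agrees with $\C$ on every dyadic cube, and inherits both (P) and \eqref{global_doubling}; but the direct argument below is cleaner and self-contained, so I will give that.

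First I would record the elementary consequence of \eqref{global_doubling} that for $R\in\D_0(Q)$ with dyadic parent $\widetilde R$ one has $\C(\widetilde R)\le D\,\C(R)$ for a constant $D=D(n,D_0)$, obtained by sandwiching $R\subseteq\widetilde R$ between two balls centered at the center of $R$ whose radii differ by a dimensional factor and iterating \eqref{global_doubling} a bounded number of times. After reducing to the case $0<\C(Q^\prime)<\infty$ and $\Lambda>0$ (if $\Lambda=0$ then $\int_{Q^\prime}|f|\dc=0$, so $|f|=0$ $\C$-quasi everywhere on $Q^\prime$ and the empty collection works), I would take $\{Q_k\}$ to be the maximal dyadic cubes $R\in\D_0(Q)$ with $R\subset Q^\prime$ and $\tfrac{1}{\C(R)}\int_R|f|\dc>\Lambda$. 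Such maximal cubes exist because the dyadic cubes containing a fixed cube form a tower whose side lengths grow to $\infty$ while competitors have side length at most $\ell(Q^\prime)$; they are non-overlapping since distinct maximal dyadic cubes are disjoint; and each is a proper subcube of $Q^\prime$, since $Q^\prime$ itself fails the stopping inequality. Conclusion~(1) is then immediate: the lower bound is the stopping condition, and for the upper bound the parent $\widetilde{Q_k}\subseteq Q^\prime$ satisfies $\tfrac{1}{\C(\widetilde{Q_k})}\int_{\widetilde{Q_k}}|f|\dc\le\Lambda$ (by maximality, or because it equals $Q^\prime$), whence $\int_{Q_k}|f|\dc\le\int_{\widetilde{Q_k}}|f|\dc\le\Lambda\,\C(\widetilde{Q_k})\le D\Lambda\,\C(Q_k)$.

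For conclusion~(2), fix $x\in Q^\prime\setminus\bigcup_kQ_k$. Every $R\in\D_0(Q)$ with $x\in R\subseteq Q^\prime$ fails the stopping inequality --- otherwise $R$ would sit inside some $Q_k$ and force $x\in Q_k$ --- so $\tfrac{1}{\C(R)}\int_R|f|\dc\le\Lambda$ for all such $R$. By the Lebesgue differentiation theorem (Theorem~\ref{Lebesgue-differentiation-new-dyadic}, applied on $Q^\prime$; this is the only place (P) is invoked) one has, for $\C$-quasi every such $x$, that $\tfrac{1}{\C(R)}\int_R|f-f(x)|\dc\to0$ along the dyadic tower $R\to x$, which eventually lies in $Q^\prime$. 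Suppose toward a contradiction that $|f(x)|=\Lambda+2\delta$ with $\delta>0$. On $\{y:|f(y)|\le\Lambda+\delta\}$ one has $|f(y)-f(x)|\ge\delta$, so by Chebyshev's inequality for the Choquet integral together with the subadditivity of $\C$, for every such $R$,
\begin{align*}
\frac{1}{\C(R)}\int_R|f-f(x)|\dc\ \ge\ \frac{\delta}{\C(R)}\,\C\bigl(\{y\in R:|f(y)|\le\Lambda+\delta\}\bigr)\ \ge\ \delta\Bigl(1-\tfrac{\Lambda}{\Lambda+\delta}\Bigr)\ =\ \frac{\delta^{2}}{\Lambda+\delta}\ >\ 0,
\end{align*}
contradicting the vanishing above; hence $|f(x)|\le\Lambda$ for $\C$-quasi every $x\in Q^\prime\setminus\bigcup_kQ_k$.

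The hard part is conceptual rather than computational: because the Choquet integral is not additive, the usual splitting and good-$\lambda$ manipulations are unavailable, which is exactly why conclusion~(2) must be reached through the Chebyshev-plus-subadditivity estimate above and through the differentiation theorem rather than by linear arithmetic --- and this is the precise point at which (P) is indispensable. The second genuinely necessary ingredient is doubling: (P) alone gives no control of the ratio $\C(\widetilde R)/\C(R)$ of a dyadic cube to its parent (a sibling of $R$ could carry essentially all the capacity of $\widetilde R$), so \eqref{global_doubling} cannot be dropped from conclusion~(1). Everything else is the standard dyadic bookkeeping already used in the proof of Theorem~\ref{cz}.
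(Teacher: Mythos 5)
Your argument is correct, but it takes a genuinely more direct route than the paper's. The paper reduces Theorem~\ref{cz_C} to the induced content $\contC$: it invokes Theorem~\ref{com-cap} to get $\C\sim\contC$, Lemma~\ref{doubling-contC} to get doubling for $\contC$, and then reruns the argument of Theorem~\ref{cz} with $\contC$ in place of $\cont$. You bypass the detour entirely: the parent--child comparison $\C(\widetilde R)\le D\,\C(R)$ comes straight from \eqref{global_doubling} (this is exactly Remark~\ref{doubling_consequence}), and Theorem~\ref{Lebesgue-differentiation-new-dyadic} already applies to $\C$ itself since it is stated for any outer measure satisfying (P). Your treatment of conclusion~(2) also fills in a step the paper leaves implicit. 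In the proof of Theorem~\ref{cz} the passage from $\frac{1}{\cont(Q'')}\int_{Q''}|f|\le\Lambda$ to $|f(x)|\le\Lambda$ is asserted without comment; the natural justification there is the sublinearity $|f(x)|\,\C(R)\le\int_R|f|\dc+\int_R|f-f(x)|\dc$, which is available for $\cont$ and $\contC$ because they are strongly subadditive (Proposition~\ref{strong-subadditivity}), but \emph{not} a priori for an arbitrary outer capacity $\C$ --- this is precisely the reason the paper needs the $\contC$ reduction. Your Chebyshev-plus-subadditivity estimate replaces this with an argument that needs only the countable subadditivity built into the definition of outer capacity, which is what makes the direct route close. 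In exchange, the paper's route is more uniform: every statement of Section~\ref{capacities} is reduced to $\contC$ by the same mechanism.

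One small point to tighten: the chain
\begin{align*}
\frac{1}{\C(R)}\int_R|f-f(x)|\dc\ \ge\ \frac{\delta}{\C(R)}\,\C\bigl(\{y\in R:|f(y)|\le\Lambda+\delta\}\bigr)\ \ge\ \delta\Bigl(1-\tfrac{\Lambda}{\Lambda+\delta}\Bigr)
\end{align*}
tacitly requires $0<\C(R)<\infty$ in order to divide and subtract. Finiteness is automatic once you have reduced to $\C(Q^\prime)<\infty$, by monotonicity. For positivity, the set where it fails, $N=\bigcup\{R\in\D(Q):R\subseteq Q^\prime,\ \C(R)=0\}$, is a countable union of $\C$-null cubes and hence $\C(N)=0$, so it can be discarded together with the exceptional set from Theorem~\ref{Lebesgue-differentiation-new-dyadic}. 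You should say this explicitly; the paper's proof carries the same unstated restriction, so this is a clarification rather than a defect of your approach.
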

	\begin{remark}\label{doubling_consequence}
		The constant $D$ appearing in Theorem \ref{cz_C} is the constant for which 
		\begin{align}\label{global_doubling_dyadic}
			\C(P') \leq D \C(Q'),
		\end{align}
		for any $Q' \in \mathcal{D}(Q)$, where $P' \in \mathcal{D}(Q)$ is the parent of $Q'$.  That $\C$ satisfies such a condition follows from repeated application of the assumption \eqref{global_doubling} and monotonicity, as a ball contained in $Q'$ can be dilated to contain $P'$, the amount of dilation depending on the dimension.
	\end{remark}
	
	Finally, we have a capacitary John-Nirenberg inequality.
	\begin{thm}\label{jn_content_C}
		Suppose $\C$ is an outer capacity that satisfies (P) and \eqref{global_doubling}. Let $Q_0\in\Dq$.
		Then there exist constants $a,A>0$ such that
		\begin{align*}
			\C\left(\{x\in Q^\prime:|f(x)-c_{Q^\prime}|>t\}\right) \leq A \C(Q^\prime) \exp(-at/\|f\|_{BMO^{\C}(Q_0)})
		\end{align*}
		for every $t>0$, $f \in BMO^{\C}(Q_0)$, all finite subcubes $Q^\prime\in \D(Q_0)$, and where $$c_{Q^\prime}= \argmin_{c \in \mathbb{R}} \frac{1}{\C(Q^\prime)}  \int_{Q^\prime}  |f-c| \dc.$$
	\end{thm}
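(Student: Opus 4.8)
The plan is to run the classical John--Nirenberg iteration of the Calder\'on--Zygmund decomposition, in the nonlinear form used in the proof of Theorem~\ref{jn_content}, but now with $\cont$ replaced by $\C$. The crucial observation is that Theorem~\ref{com-cap} supplies exactly the structural input needed. Since $\tfrac14\contC(E)\le\C(E)\le\contC(E)$ for every $E\subset\rn$, monotonicity of the Choquet integral gives $\tfrac14\int u\dcontC\le\int u\dc\le\int u\dcontC$ for every $u\ge 0$; combining this with the strong subadditivity of the induced content $\contC$ (which, being a dyadic content, is strongly subadditive by the same nested-covering argument as for \eqref{dyadiccontent}) yields a quasi-subadditivity of the Choquet integral with respect to $\C$, namely $\int(u+v)\dc\le 4\int u\dc+4\int v\dc$. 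Moreover $\C$ satisfies (P) by hypothesis, $\C$ satisfies the dyadic doubling bound $\C(P')\le D\,\C(Q')$ for $P'$ the dyadic parent of $Q'$ by Remark~\ref{doubling_consequence}, and Theorem~\ref{cz_C} provides the Calder\'on--Zygmund decomposition. These are all the ingredients of the proof of Theorem~\ref{jn_content}.

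By homogeneity we may assume $\|f\|_{BMO^{\C}(Q_0)}=1$. Set
\[
\Psi(t):=\sup\Big\{\tfrac{\C(\{x\in Q':\,|f(x)-c_{Q'}|>t\})}{\C(Q')}\ :\ Q'\in\D(Q_0)\ \text{finite},\ \|f\|_{BMO^{\C}(Q_0)}\le 1\Big\},
\]
so it suffices to prove $\Psi(t)\le A e^{-at}$; note $\Psi$ is non-increasing and $\Psi\le 1$ trivially. Fix such an $f$ and a finite $Q'\in\D(Q_0)$ with $\C(Q')>0$, and write $g:=|f-c_{Q'}|$, so $\tfrac1{\C(Q')}\int_{Q'}g\dc\le 1$. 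Apply Theorem~\ref{cz_C} to $g$ on $Q'$ at the level $\Lambda_0:=2A_0$ (with $A_0\ge 1$ the packing constant): this produces non-overlapping dyadic cubes $\{Q_j\}\subset Q'$ with $\Lambda_0<\tfrac1{\C(Q_j)}\int_{Q_j}g\dc\le D\Lambda_0$ and $g\le\Lambda_0$ $\C$-q.e.\ on $Q'\setminus\bigcup_j Q_j$, so that $\{x\in Q':g(x)>t\}\subset\bigcup_j Q_j$ modulo a $\C$-null set whenever $t\ge\Lambda_0$.

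Two estimates drive the iteration. First, a stopping-time argument on partial sums upgrades (P) to the packing bound $\sum_j\C(Q_j)\le\tfrac12\C(Q')$: if $N^\ast$ is the largest index with $\sum_{j\le N^\ast}\C(Q_j)\le A_0\C(Q')$, then applying (P) to $\{Q_j\}_{j\le N^\ast}$ gives $\sum_{j\le N^\ast}\int_{Q_j}g\dc\le A_0\int_{Q'}g\dc\le A_0\C(Q')$, whence $\sum_{j\le N^\ast}\C(Q_j)\le\Lambda_0^{-1}\sum_{j\le N^\ast}\int_{Q_j}g\dc\le\tfrac{A_0}{\Lambda_0}\C(Q')=\tfrac12\C(Q')$; since $\C(Q_{N^\ast+1})\le\Lambda_0^{-1}\int_{Q'}g\dc\le\tfrac1{2A_0}\C(Q')$, adding one more term keeps the sum $\le\C(Q')\le A_0\C(Q')$, contradicting maximality of $N^\ast$ unless there is no such finite $N^\ast$, i.e.\ $\sum_j\C(Q_j)\le\tfrac12\C(Q')$. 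Second, on each $Q_j$ the quasi-subadditivity of $\int\!\cdot\,\dc$, together with $\tfrac1{\C(Q_j)}\int_{Q_j}|f-c_{Q_j}|\dc\le\|f\|_{BMO^{\C}(Q_0)}\le 1$ and $\tfrac1{\C(Q_j)}\int_{Q_j}g\dc\le D\Lambda_0$, gives $|c_{Q_j}-c_{Q'}|\,\C(Q_j)=\int_{Q_j}|c_{Q_j}-c_{Q'}|\dc\le 4\int_{Q_j}|f-c_{Q_j}|\dc+4\int_{Q_j}g\dc\le C_1\Lambda_0\,\C(Q_j)$, hence $|c_{Q_j}-c_{Q'}|\le C_1\Lambda_0=:\ell_0$. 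Consequently, for $t\ge\Lambda_0+\ell_0=:t_0$, the sets $\{x\in Q_j:g(x)>t\}$ are contained in $\{x\in Q_j:|f(x)-c_{Q_j}|>t-\ell_0\}$, and countable subadditivity of $\C$, the definition of $\Psi$ (each $Q_j\in\D(Q_0)$ is finite), and the packing bound give
\[
\C(\{x\in Q':g(x)>t\})\le\sum_j\C(\{x\in Q_j:|f-c_{Q_j}|>t-\ell_0\})\le\Psi(t-\ell_0)\sum_j\C(Q_j)\le\tfrac12\,\Psi(t-\ell_0)\,\C(Q').
\]
Taking the supremum over $Q'$ and $f$ yields $\Psi(t)\le\tfrac12\Psi(t-\ell_0)\le\tfrac12\Psi(t-t_0)$ for $t\ge t_0$; since $\Psi\le 1$, iterating in steps of $t_0$ gives $\Psi(t)\le 2^{-\lfloor t/t_0\rfloor}\le 2e^{-at}$ with $a=\tfrac{\log 2}{t_0}$, and undoing the normalization (replacing $f$ by $f/\|f\|_{BMO^{\C}(Q_0)}$, noting the argmin $c_{Q'}$ scales accordingly) gives the stated inequality with $A=2$.

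I expect the main obstacle to be exactly the circularity noted in the packing step: the hypothesis (P) only delivers the quasi-additivity $\sum_j\int_{Q_j}g\dc\le A_0\int_{\cup_j Q_j}g\dc$ under the a priori control $\sum_j\C(Q_j)\le A_0\C(Q')$, which is precisely what the quasi-additivity is needed to establish. The stopping-time argument on partial sums, which works because choosing $\Lambda_0=2A_0$ large produces a strict contraction factor $A_0/\Lambda_0<1$, is the device that breaks this loop. A secondary point demanding care is that the centering constants $c_{Q_j}$ are $\argmin$'s rather than averages, so comparing them across scales is not the linear computation of the classical proof; here it is the quasi-subadditivity of the Choquet integral --- itself extracted from Theorem~\ref{com-cap} and the strong subadditivity of $\contC$ --- that substitutes for linearity, and it is the only place beyond Theorems~\ref{com-cap} and \ref{cz_C} where the hypotheses on $\C$ enter.
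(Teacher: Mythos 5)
Your high-level strategy is sound and close to the paper's: reduce via Theorem~\ref{com-cap}, use Theorem~\ref{cz_C} for the Calder\'on--Zygmund decomposition, estimate the drift of the argmin-constants via quasi-subadditivity of $\int\cdot\,\dc$ (correctly extracted from $\C\sim\contC$ together with strong subadditivity of $\contC$), and iterate. Your $\Psi$-iteration with a fixed step $t_0$ and contraction factor $\tfrac12$ is in fact a simpler and more classical variant of the paper's mechanism (the paper uses a function $F(t)$, the refined recursion $F(t)\le C'F(t-c's)/s$, and Lemma~\ref{exponentialestimate}); either form of the iteration yields exponential decay.

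The gap is the stopping-time argument in the packing step, and it is where you yourself anticipate trouble. The hypothesis of Assumption~(P), as it is actually used throughout the paper, is the \emph{tree-wide} bound $\sum_{Q_j\subset Q''}\C(Q_j)\le A_0\,\C(Q'')$ for \emph{every} $Q''\in\D(Q)$, not merely for the single enclosing cube $Q'$. This is clear both from the prototype Proposition~\ref{deco-cube-int} (whose hypothesis explicitly reads ``for each $Q'\in\Dq$'') and from every application of (P) in the paper: in the proofs of Theorem~\ref{max-new}, Theorem~\ref{com-cap}, and Lemma~\ref{aux_lemma}, (P) is invoked only \emph{after} Proposition~\ref{decom-cubes} has produced a subfamily $\{Q_{j_k}\}$ satisfying $\sum_{Q_{j_k}\subset Q''}\C(Q_{j_k})\le 2\C(Q'')$ for all $Q''$. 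Indeed, if (P) were the one-cube version, the Hausdorff contents would not satisfy it: for $\beta<n$, one can place $(L/l)^n$ disjoint $l$-cubes inside an $L$-cube and make $\sum\ell(Q_j)^\beta/\ell(Q'')^\beta=(L/l)^{n-\beta}$ as large as one likes while keeping the total content over the master cube controlled. Your stopping-time bootstrap only certifies $\sum_{j\le N^*}\C(Q_j)\le A_0\C(Q')$ for the master $Q'$; the Calder\'on--Zygmund cubes can cluster in a subcube $Q''$ and violate the per-$Q''$ bound, and verifying that bound would again require the very quasi-additivity you are trying to establish, now at the scale of $Q''$. The circularity is therefore not broken; it merely reappears one level down.

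The paper's device for escaping this is Proposition~\ref{decom-cubes} (Melnikov's packing lemma): applied to the CZ family $\{Q_j\}$, it extracts a subfamily $\{Q_{j_k}\}$ satisfying the tree-wide packing bound with constant $2$ (item~(2)), to which Proposition~\ref{deco-cube-int}/(P) legitimately applies, plus a family of non-overlapping ``heavy'' ancestors $\{\tilde Q_k\}$ whose total measure is absorbed into the subfamily by item~(3). That is exactly the mechanism of Lemma~\ref{aux_lemma}, which Section~\ref{capacities} instructs one to replicate verbatim for $\contC$. If you insert this extraction step in place of your stopping-time argument -- running your $\Psi$-iteration over the covering $\bigcup_k Q_{j_k}\cup\bigcup_k\tilde Q_k$ and using item~(3) to control the ancestors -- the rest of your proof goes through.
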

	Here $f \in BMO^{\C}(Q_0)$ means that
	\begin{align*}
		\|f\|_{BMO^{\C}(Q_0)} := \|\mathcal{M}^{\#}_{\C} f \|_{L^\infty(Q_0)}<+\infty
	\end{align*}
	for
	\begin{align*}
		\mathcal{M}^{\#}_{\C} f(x):= \sup_{x \in Q \in \D(Q_0)} \inf_{c \in \mathbb{R}} \frac{1}{\mathcal{C}(Q')}\int_{Q'} |f(y)-c|\dc
	\end{align*}
	the capacitary sharp maximal function.


	The plan of the paper is as follows.
	In Section \ref{examples}, we present some examples of the theory discussed in the introduction.  
	In Section \ref{sec:preliminaries}, we recall some background regarding general Hausdorff contents induced by monotone increasing set functions as well as establish several useful results which were previously known for the $\beta$ dimensional Hausdorff content. In Section \ref{sec:Dyadic Analysis for Outer Measures}, we give the proofs of Theorems \ref{max-new} and \ref{Lebesgue-differentiation-new-dyadic} for general outer measures. In Section \ref{Max-Diff-CZ}, we establish results related to a translation invariant Hausdorff contents $\cont$. In particular, we prove weak type $(1,1)$ and strong type $L^p$ estimate for maximal function in Theorem \ref{weak-type-cont-maximal}, Lebesgue differentiation theorem in Theorem \ref{Lebesgue-diff-cont}, Calder\'on-Zygmund decomposition in Theorem \ref{cz}. In Section \ref{JN-Inequality}, we prove John-Nirenberg inequality for $\cont$ in Theorem \ref{jn_content}. Finally, in Section \ref{capacities}, we prove Theorems \ref{maxi-esti-new}, \ref{Lebesgue-differentiation-new}, \ref{cz_C} and \ref{jn_content_C} which includes Maximal function estimates, Lebesgue differentiation theorem, Calder\'on-Zygmund decomposition and John-Nirenberg inequality for general doubling outer capacity.

	\subsection{Examples}\label{examples}
	\begin{example}\label{eg-1}
		Let $\beta\in (0,n]$ and define the spherical Hausdorff content of dimension $\beta$ by
		\begin{align*}
			\mathcal{H}^\beta_\infty(E):= \inf \left\{ \sum_{i} \omega_{\beta} r_i^\beta: E\subset \bigcup_{i} B(x_i,r_i)\right\},
		\end{align*}
		where $\omega_{\beta}= \pi^{\beta/2}/\Gamma(\beta/2+1)$ is a normalization constant.  For Theorems \ref{weak-type-cont-maximal}, \ref{Lebesgue-diff-cont}, \ref{cz}, and \ref{jn_content} were established for $\mathcal{H}^\beta_\infty$ in \cite{ChenOoiSpector,ChenSpector}.  Of course, they also follow from the more general arguments of the present paper, either by the equivalence of  $\mathcal{H}^\beta_\infty$ and its dyadic analogue (see e.g. Proposition 2.3 in \cite{Yang-Yuan}) and making a direct appeal to Theorems \ref{weak-type-cont-maximal}, \ref{Lebesgue-diff-cont}, \ref{cz}, and \ref{jn_content} or from the observation that $\C=\mathcal{H}^\beta_\infty$ is an outer capacity which satisfies (P) and \eqref{global_doubling} and an appeal to Theorems \ref{maxi-esti-new}, \ref{Lebesgue-differentiation-new}, \ref{cz_C}, and \ref{jn_content_C}.
	\end{example}

	\begin{example}
		Fix $1\leq p<n$, set $p^*=\frac{np}{n-p}$, and define
		\begin{align*}
			K^p:=\{f:\rn\rightarrow\mathbb{R}\,|\,f\geq 0, \, f\in L^{p^*}(\rn), \, \nabla f \in L^p(\rn,\rn)\}.
		\end{align*}
		Now, for $E\subset \rn$, we define Sobolev capacity by
		\begin{align*}
			\text{Cap}_{1,p}(E):=\inf \left\{\int_{\rn}|\nabla f|^p\,{\rm d}x\,|\, f\in K^p, E\subset \interior{\{f\geq 1\}}\right\},
		\end{align*}
		where $\interior{\{f\geq 1\}}$ denotes the interior of the set $\{f\geq 1\}$. Theorem 4.15 in \cite{EvansGariepy} shows that $\C=\text{Cap}_{1,p}$ is a strongly subadditive (see Section \ref{sec:preliminaries} below) outer capacity and satisfies \eqref{global_doubling}.  This capacity does not satisfy (P), as this would imply $\C \sim \mathcal{H}^\C_\infty$, which is known to be false (this follows from a suitable modification of the Cantor set construction in \cite[Section 5.3]{AdamsHedberg}, see also the next example).
	\end{example}
	
	The next example generalizes the preceding to a broader range of exponents.
	
	\begin{example}
		Let $\alpha\in (0,n)$ and $1<p<\frac{n}{\alpha}$.  Define the Riesz capacity of a set $E \subset \mathbb{R}^n$ by
		\begin{align*}
			\operatorname*{cap_{\alpha,p}}(E)	= \inf\left\{||\psi||^p_{L^p(\rn)}\,:\, \psi \geq 0, \, I_\alpha *\psi\geq 1 \, \text{ on }\, E\right\}, \quad E\subset\rn,
		\end{align*}
		where
		\begin{align*}
			I_\alpha(x): =\frac{1}{\gamma(\alpha)}\frac{1}{|x|^{n-\alpha}}
		\end{align*}
		denotes the Riesz kernel of order $\alpha$ and $\gamma(\alpha)$ is a normalization constant given in \cite[p.~117]{S}. Then $\operatorname*{cap_{\alpha,p}}$ is an outer capacity and satisfies \eqref{global_doubling}.  Theorems \ref{maxi-esti-new} and \ref{Lebesgue-differentiation-new} were proved for $\operatorname*{cap_{\alpha,p}}$ in \cite{ChenOoiSpector}, and while Theorem \ref{cz_C} has not appeared explicitly, it follows from the argument in \cite[Theorem 3.4]{ChenSpector}.  As in the preceding example, this capacity does not satisfy (P), as this would imply $\C \sim \mathcal{H}^\C_\infty$, which can easily be shown not to hold by \cite[Theorem 5.3.2 on p.~143]{AdamsHedberg}.  We do not know whether $\operatorname*{cap_{\alpha,p}}$ supports a John-Nirenberg inequality.
	\end{example}
	
	In the preceding examples involving Sobolev and Riesz capacities, the content induced by the set function is precisely a $\beta$-dimensional Hausdorff content.  This suggests one to consider two families of examples which arise in the context of Sobolev embeddings.  
	
The first family appears in Sobolev embeddings away from the case $p=1$, precisely because of the discrepancy between the Sobolev capacity and Hausdorff content mentioned in the preceding examples.  In particular, as recorded in \cite[p.~361 in 1985]{Mazya}, trace inequalities are shown to hold for powers of measures which are smaller than a Sobolev capacity.
	\begin{example}
		Let $\mu$ be a locally finite Radon measure which is doubling, fix $\alpha \in (0,n]$, and let 
		\begin{align*}
			\C(E)=\inf_{U\supset E} \mu(U)^{\alpha/n}
		\end{align*} 
        where infimum is taken over all open sets $U$ contains $E$.
		Then $\C$ is an outer capacity and satisfies the doubling condition \eqref{global_doubling}.  Therefore, Theorems~\ref{maxi-esti-new}, \ref{Lebesgue-differentiation-new}, \ref{cz_C}, and \ref{jn_content_C} are applicable to $\contC$.  
\end{example}	
	
	The second family, which appears in embeddings in the Sobolev critical exponent, is a case where one has a logarithmic capacity.  While the capacity and content are again not equivalent, the family of contents generated this way gives more interesting examples.  

	\begin{example}\label{spl-eg}
		Let $\beta \in (0,n]$.  For $t>0$, define
		\[
		\varphi(t)= \begin{cases} \left[\log \left(\frac{2}{t}\right) \right]^{-\beta} & \text { if }\, t<2; \\ \infty & \text { if } \, t\geq 2. \end{cases}
		\]
		This choice of $\varphi$ does not lead to a doubling set function, though it does define a locally doubling set function, i.e., for $Q' \in \mathcal{D}(Q)$ with $\ell(Q')<1$, we have
		\begin{align*}
			\frac{\varphi(\ell(P^\prime)) }{\varphi(\ell(Q^\prime))}=	\left[\log \left(\frac{2}{2\ell(Q^\prime)}\right)/\log \left(\frac{2}{\ell(Q^\prime)}\right) \right]^{-\beta}\leq 2^\beta,
		\end{align*}	
		where $P^\prime$ is the parent of $Q'$ in $\mathcal{D}(Q)$.  Nonetheless, the translation invariance of $\cont$ implies that one has for this set function the conclusions of Theorems \ref{weak-type-cont-maximal}, \ref{Lebesgue-diff-cont}, \ref{cz}, and \ref{jn_content}.
	\end{example}

	\section{Preliminaries}\label{sec:preliminaries}
	Throughout the paper, we work with the Choquet integrals of various set functions, whose definitions we recall here.  For a set function $H: \mathcal{P}(\mathbb{R}^n) \to [0,\infty]$ and a function $f:\mathbb{R}^n \to [0,\infty],$ we define the {\bf Choquet integral} of $f$ with respect to $H$ by 
	\begin{align}\label{choq-int}
		\int f ~ \rm d H := \int_{0}^\infty H \left(\{f>t\}\right)\dt,
	\end{align}
	where the right-hand-side is interpreted as a Lebesgue or improper Riemann integral of the monotone increasing set function
	\begin{align*}
		t \mapsto H \left(\{f>t\}\right).
	\end{align*}
		
	Let $\lambda: \D(Q) \to [0,\infty]$ denote a monotone increasing set function on the dyadic lattice generated by $Q$.  
	We next state and prove a generalization of a packing lemma that is attributed to Melnikov in the case $\lambda(Q)=l(Q)^\beta$ in \cite{OV}.
	\begin{proposition}\label{decom-cubes}
		Let $\lambda : \D(Q) \to [0,\infty]$ be a monotone increasing set function and $\left\{Q_{j}\right\}_j\subset\Dq$ be a family of nonoverlapping dyadic cubes.  Then there exists a subfamily $\left\{Q_{j_{k}}\right\}_k$ and a family of nonoverlapping ancestors $\{\tilde{Q}_{k}\}_k$ such that:
		\begin{itemize}
			\item[(1)]  We have
			\begin{align}\label{pack-1}
				\bigcup_{j} Q_{j} \subset \bigcup_{k} Q_{j_{k}} \cup \bigcup_{k} \tilde{Q}_{k}.
			\end{align}
			\item[(2)] We have
			\begin{align}\label{pack-2}
				\sum_{Q_{j_{k}} \subset Q^\prime} \lambda(Q_{j_{k}}) \leq 2 \lambda(Q^\prime), \text { for each } Q^\prime\in\Dq.
			\end{align}
			\item[(3)] For each $\tilde{Q}_{k}$,
			\begin{align}\label{pack-3}
				\lambda(\tilde{Q}_{k}) \leq \sum_{Q_{j_{i}} \subset \tilde{Q}_{k}} \lambda(Q_{j_{i}}).
			\end{align}
		\end{itemize}
	\end{proposition}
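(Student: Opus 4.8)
The plan is to run a Calder\'on-Zygmund type stopping time on the monotone function $S(R) := \sum_{Q_j \subseteq R}\lambda(Q_j)$, $R \in \D(Q)$, comparing it against $\lambda(R)$, which is the scheme underlying the classical $\lambda(Q)=\ell(Q)^\beta$ case. Since $\D(Q)$ splits into the $2^n$ subtrees rooted at the ``infinite'' cubes $\prod_i I_i$, and ancestors belonging to different subtrees are automatically non-overlapping, it suffices to carry out the construction within one subtree and take unions of the outputs.

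Call $R$ \emph{heavy} if $S(R) > \lambda(R)$. Because the $Q_j$ are non-overlapping we have $S(Q_j) = \lambda(Q_j)$, so no $Q_j$ is heavy and every heavy cube is a proper ancestor of some family cube. Let $\{\tilde Q_k\}$ be the \emph{maximal} heavy cubes; when the family sits inside a fixed finite cube these exist because upward chains in $\D(Q)$ are then finite. Property (3) is essentially built into this choice: $\tilde Q_k$ heavy gives $\lambda(\tilde Q_k) < S(\tilde Q_k)$, and inside each $\tilde Q_k$ I would select a subcollection of family cubes by a greedy procedure compatible with the packing constraint below, stopping as soon as the accumulated $\lambda$-mass reaches $\lambda(\tilde Q_k)$; this is possible precisely because $S(\tilde Q_k) > \lambda(\tilde Q_k)$, and the overshoot is at most the $\lambda$-measure of the last cube added, which is $\le\lambda(\tilde Q_k)$. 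Hence the mass selected inside $\tilde Q_k$ lies in $[\lambda(\tilde Q_k),\,2\lambda(\tilde Q_k)]$, which is the source of the constant $2$. The subfamily $\{Q_{j_k}\}$ is then all of these cubes together with every family cube lying in no $\tilde Q_k$; for the latter, every ancestor $R$ satisfies $S(R)\le\lambda(R)$, so those cubes already pack with constant $1$. Since every family cube with a heavy ancestor lies in some maximal heavy cube (or in the top cube treated below), (1) holds.

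To verify (2), fix $Q'\in\D(Q)$ and split the selected cubes contained in $Q'$ into those of the second kind (family cubes in no $\tilde Q_k$, which can occur inside $Q'$ only when $Q'$ has no heavy ancestor, so that $S(Q')\le\lambda(Q')$) and, for each maximal heavy $\tilde Q_k\subseteq Q'$, the cubes selected inside $\tilde Q_k$. The second-kind cubes contribute at most the mass not consumed by the $\tilde Q_k$'s, while each $\tilde Q_k$ contributes at most $2\lambda(\tilde Q_k)<2S(\tilde Q_k)$; since the $\tilde Q_k\subseteq Q'$ are disjoint subcubes with $\sum_k S(\tilde Q_k)\le S(Q')$, a short bookkeeping bounds the total by $2S(Q')\le 2\lambda(Q')$. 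The remaining case $Q'\subsetneq\tilde Q_k$ is absorbed into the recursion inside $\tilde Q_k$.

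The real difficulty, and the reason the infinite cubes were placed into $\D(Q)$, is that $S$ may remain strictly above $\lambda$ along an entire infinite ascending chain, so that no maximal heavy cube exists. One is then forced to use the top cube $T=\prod_i I_i$ of that subtree as a covering cube, and must produce inside $T$ a packing-bounded subfamily whose total $\lambda$-mass is at least $\lambda(T)$. I would exhaust $T$ by finite dyadic cubes $T_0\subsetneq T_1\subsetneq\cdots$, apply the finite-cube construction to the family cubes lying in $T_m$ but not in $T_{m-1}$, and arrange the $m$-th layer to be selected with total $\lambda$-mass comparable to the increment $\lambda(T_m)-\lambda(T_{m-1})$, taking a layer in full when it is too light to supply its share; monotonicity of $m\mapsto\lambda(T_m)$ then makes the partial sums telescope to at most $\lambda(T_m)\le\lambda(T)$, preserving (2), while driving the grand total up toward $\lambda(T)$, which gives (3). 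I expect the delicate point to be reconciling ``packing-bounded at every finite scale'' with ``total at least $\lambda(T)$'' in the presence of a possible jump of $\lambda$ at the top cube; below the scale of a finite cube everything is the routine stopping-time argument.
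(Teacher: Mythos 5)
Your plan defines the ancestors $\{\tilde{Q}_k\}$ as the maximal \emph{heavy} cubes, i.e.\ the maximal cubes $R$ for which $S(R) := \sum_{Q_j \subseteq R} \lambda(Q_j) > \lambda(R)$, and then asserts that property (3) can be arranged because the greedy selection inside each $\tilde{Q}_k$ can be halted once the accumulated $\lambda$-mass reaches $\lambda(\tilde{Q}_k)$, ``possible precisely because $S(\tilde{Q}_k) > \lambda(\tilde{Q}_k)$.'' That inference is the gap. Knowing that the \emph{total} family mass inside $\tilde{Q}_k$ exceeds $\lambda(\tilde{Q}_k)$ does not mean a subcollection respecting the packing bound (2) at every intermediate scale can reach mass $\lambda(\tilde{Q}_k)$: the constraint (2) at a small descendant can cap the selectable mass far below $\lambda(\tilde{Q}_k)$. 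Concretely, take $R$ with $\lambda(R)=10$, a descendant $C\subsetneq R$ with $\lambda(C)=1$, put all family cubes inside $C$ with $\sum_{Q_j\subset C}\lambda(Q_j)=100$, and let $\lambda$ jump to $100$ on the parent of $R$. Then $C$ and $R$ are both heavy and $R$ is the maximal heavy cube, yet any selection obeying (2) at $C$ carries total mass at most $2\lambda(C)=2<10=\lambda(R)$, so (3) fails for $R$. The criterion ``$S(R)>\lambda(R)$'' is not the right stopping rule, and the recursion you gesture at would have to replace $R$ by $C$ anyway, which is not what ``maximal heavy'' produces.

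The paper's proof instead identifies the ancestors with the cubes at which the greedy selection is actually \emph{obstructed}. Running through the family in order and adding $Q_j$ whenever (2) still holds for all cubes, one finds for each rejected $Q_m$ an ancestor $Q_m^*$ with $\lambda(Q_m^*)<\infty$ at which $\sum_{j_k<m,\,Q_{j_k}\subset Q_m^*}\lambda(Q_{j_k})+\lambda(Q_m)>2\lambda(Q_m^*)$; the $\{\tilde{Q}_k\}$ are the maximal cubes arising this way. Property (3) is then immediate from monotonicity, $\lambda(Q_m)\le\lambda(Q_m^*)$, with no layer-by-layer bookkeeping. In the example above this mechanism designates $C$, not $R$, as the ancestor, and $C$ does satisfy (3). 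The infinite-ascending-chain worry you raise at the top cubes also evaporates in this framework: a cube with $\lambda=\infty$ cannot serve as an obstruction since the right side of (2) is then $\infty$, so each $Q_m^*$ automatically sits at a level where $\lambda$ is finite and no exhaustion $T_0\subsetneq T_1\subsetneq\cdots$ is needed.
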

	\begin{proof}
		If for any $Q^\prime\in \Dq$, there is no cube from $\{Q_j\}_j$ contained inside $Q^\prime$, then property (2) is automatically satisfied. Again for any $Q^\prime\in\Dq$, if there is only one member from the sequence inside $Q^\prime$, the property (2) immediately follows from the monotonicity of $\lambda$. In other cases $\left\{Q_{j_{k}}\right\}$ will be selected as follows. We select $Q_{j_{1}}$ to be $Q_{1}$ and for $j_{1}, j_{2}, \ldots, j_{k}$, we select $j_{k+1}$ to be the smallest $j_{k+1}$ such that $j_{k+1}>j_{k}$ and $\left\{Q_{j_{1}}, Q_{j_{2}}, \ldots Q_{j_{k+1}}\right\}$ satisfies (2). Then for every $Q_{m}$ such that $Q_{m} \notin\left\{Q_{j_{k}}\right\}_{k}$ , there exists $Q_{m}^{*}\in \Dq$ such that $Q_{m} \subset Q_{m}^{*}$ and
		\begin{align}\label{packing-eq-1}
			\sum_{j_{k}<m, Q_{j_{k}} \subset Q_{m}^*} \lambda(Q_{j_{k}})+\lambda(Q_{m})>2 \lambda(Q_{m}^{*})	.
		\end{align}
		Also, we are taking such $Q_m^*$, for which $\lambda(Q_m^*)<\infty$. Otherwise, when $\lambda(Q_m^*)=\infty$, property (2) will be automatically satisfied, and then for $Q_m$, $Q_m^*$ will not be the right candidate. From the monotonicity of $\lambda$, we have 
		\begin{align}\label{packing-eq-2}
			\lambda(Q_m)\leq 	\lambda(Q_m^*).
		\end{align}
		Now, we let $\{\tilde{Q}_{k}\}_k$ be the maximal family of all such $Q_{m}^{*}$. Moreover, we notice that for each $Q_m^*$, there holds
		\begin{align}\label{packing-eq-3}
			\sum_{Q_{j_{k}} \subset Q_{m}^*} \lambda(Q_{j_{k}}) \geq \sum_{j_{k}<m, Q_{j_{k}} \subset Q_{m}^*} \lambda(Q_{j_{k}}) \overset{\eqref{packing-eq-1}}{>} 2 \lambda(Q_{m}^{*})-\lambda(Q_{m}) \overset{\eqref{packing-eq-2}}{\geq} \lambda \left(Q_{m}^{*}\right).
		\end{align}
		As, $Q_m^*$'s are basically $\tilde{Q}_{k}$, and hence by \eqref{packing-eq-3}, we get the property (3). By our construction, property (1) is automatically verified.
	\end{proof}
	
	Following the notational convention of \cite{STW}, we define the Hausdorff content induced by $\lambda$ by
	\begin{align}\label{lambda_dyadiccontent}
		\contH(E):= \inf \left\{ \sum_{i} \lambda(Q_i): E\subset \bigcup_{i} Q_i, Q_i\in \D(Q) \right\}.
	\end{align} 
	Observe that this definition contains $\cont$ for the choice $\lambda(Q)= \varphi(l(Q))$ and $\contC$ for the choice $\lambda(Q) = \C(Q)$.  In particular, the results we record here for $\contH$ are valid for both $\cont$ and $\contC$.  
	
	We begin with the following result which shows that the Hausdorff contents are outer capacities.
	\begin{proposition}
		Let $\lambda : \D(Q) \to [0,\infty]$ be a monotone increasing set function which satisfy $\lim_{|Q| \to 0^{+}} \lambda(Q)=0$.  Then the set function $\contH: \mathcal{P}(\mathbb{R}^n) \to [0,\infty]$ is an outer measure, e.g.~ $\contH$ satisfies $(i),(ii), (iii)$ in the definition of an outer capacity.
	\end{proposition}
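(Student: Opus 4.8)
The plan is to verify the three defining conditions of an outer measure --- $\contH(\emptyset)=0$, monotonicity, and countable subadditivity --- straight from the definition \eqref{lambda_dyadiccontent}, along the classical lines used for Lebesgue outer measure and the $\beta$-dimensional Hausdorff contents. Two features of the lattice $\Dq$ will be invoked throughout: at each fixed scale $k$ the cubes $\{2^k(Q+m):m\in\mathbb{Z}^n\}$ tile $\rn$, so every $E\subset\rn$ has at least one admissible dyadic cover; and $\Dq$ contains cubes of arbitrarily small side length, whence $\inf_{Q'\in\Dq}\lambda(Q')=\lim_{|Q'|\to0^+}\lambda(Q')=0$ by hypothesis.

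For the null-set property, I would note that $\emptyset\subset Q'$ for every single $Q'\in\Dq$, so $\contH(\emptyset)\le\inf_{Q'\in\Dq}\lambda(Q')=0$. For monotonicity, I would observe that when $A\subset B$ every admissible dyadic cover of $B$ is also an admissible dyadic cover of $A$, so the infimum defining $\contH(A)$ is taken over a family containing the one defining $\contH(B)$ and therefore does not exceed it.

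For countable subadditivity, given $A\subset\bigcup_{i=1}^\infty A_i$ with $\sum_i\contH(A_i)<\infty$ (the complementary case being vacuous), I would fix $\eps>0$, choose for each $i$ an admissible cover $A_i\subset\bigcup_j Q_j^i$ with $Q_j^i\in\Dq$ and $\sum_j\lambda(Q_j^i)\le\contH(A_i)+2^{-i}\eps$, and note that the countable family $\{Q_j^i\}_{i,j}\subset\Dq$ covers $A$, so that
\[
\contH(A)\le\sum_{i,j}\lambda(Q_j^i)\le\sum_i\bigl(\contH(A_i)+2^{-i}\eps\bigr)=\sum_i\contH(A_i)+\eps;
\]
letting $\eps\to0$ completes the argument.

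I do not anticipate any genuine obstacle here: the proof is the routine infimum-over-countable-covers verification, and the only place the hypothesis $\lim_{|Q'|\to0^+}\lambda(Q')=0$ is actually used is the null-set property (and, in the same manner, to see that $\contH$ assigns measure zero to singletons, which is what makes the $\eps 2^{-i}$ bookkeeping meaningful rather than vacuous).
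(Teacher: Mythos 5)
Your proposal is correct and follows essentially the same route as the paper: the null-set property from the existence of arbitrarily small dyadic cubes and the hypothesis $\lim_{|Q'|\to 0^+}\lambda(Q')=0$, monotonicity from the observation that covers of a superset are covers of a subset, and countable subadditivity by the standard $\eps\,2^{-i}$ near-optimal-cover argument (your indexing of the error term by $i$ is in fact cleaner than the paper's, which has a harmless typo writing $\eps/2^k$ with $k$ the summation index).
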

	\begin{proof}
		\noindent
		\begin{itemize}
			\item[(i)] As the empty set is contained in any tower of cubes that shrinks to a point, monotonicity of $\lambda$ and the assumption that $\lim_{|Q| \to 0^{+}} \lambda(Q)=0$ implies that $\contH(\emptyset)=0$.
			\item[(ii)]  Let $B\subset \cup_{i} Q_i$, with $Q_i\in \Dq$. Since $A\subset B$, we have $A \subseteq  \cup_i Q_i$. Now from the definition of $\contH(A)$ we have
			\begin{align*}
				\contH(A)\leq \sum_{i} \lambda(Q_i).
			\end{align*}
			Therefore, taking all possible covers of $B$ and using the infimum we deduce
			\begin{align*}
				\contH(A)\leq \contH(B).
			\end{align*} 
			\item[(iii)] We may assume that $\contH(A_i)<+\infty$ for all $i$ or there is nothing to prove.  Let $\epsilon>0$.  For each $i$, let $\{ Q^i_k \}_k \subseteq \mathcal{D}(Q)$, be a cover of $A_i$ such that
			\begin{align*}
				\sum_{k}\lambda(Q^i_k) \leq \contH(A_i) +  \frac{\epsilon}{2^k}.
			\end{align*}		
			Then $A \subset \cup_i A_i$ implies 
			\begin{align*}
				A\subset\bigcup_{i,k} Q^i_k.
			\end{align*}
			As a result, we obtain
			\begin{align*}
				\contH(A)&\leq \sum_{i}\sum_{k}\lambda(Q^i_k) \\
				&\leq \sum_{i}\contH(A_i) +  \epsilon.
			\end{align*}
			The claimed countable subadditivity then follows by sending $\epsilon \to 0$.
		\end{itemize}	
	\end{proof}

	
	We next recall that a result which shows that a general Hausdorff content constructed on a dyadic lattice is strongly subadditive, established in Proposition 3.5 in \cite{STW} under more general hypotheses.  
	\begin{proposition}\label{strong-subadditivity}
		Let $\lambda :\D(Q) \to [0,\infty]$ be a monotone increasing set function.  For any sets $A, \, B\subset\rn$,
		\begin{align*}
			\contH(A\cup B)+\contH(A\cap B)\leq \contH(A)+\contH(B).
		\end{align*}
	\end{proposition}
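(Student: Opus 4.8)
The plan is to exploit the fact that $\contH$ is defined by an infimum over dyadic covers, together with the lattice structure of $\D(Q)$: any two dyadic cubes in $\D(Q)$ are either nested or non-overlapping. Fix $\epsilon>0$ and choose a cover $\{P_i\}_i \subset \D(Q)$ of $A$ and a cover $\{R_j\}_j \subset \D(Q)$ of $B$ with
\begin{align*}
	\sum_i \lambda(P_i) \leq \contH(A) + \epsilon, \qquad \sum_j \lambda(R_j) \leq \contH(B) + \epsilon.
\end{align*}
The combined family $\{P_i\}_i \cup \{R_j\}_j$ obviously covers $A \cup B$, so $\contH(A\cup B) \leq \sum_i \lambda(P_i) + \sum_j \lambda(R_j)$; the point is to simultaneously produce a cover of $A\cap B$ whose total $\lambda$-weight, added to that of the $A\cup B$ cover, does not exceed $\sum_i \lambda(P_i) + \sum_j \lambda(R_j)$. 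The natural candidate cover of $A\cap B$ is the collection of pairwise intersections $\{P_i \cap R_j : P_i \cap R_j \neq \emptyset\}$; by the dyadic dichotomy, each nonempty $P_i \cap R_j$ equals either $P_i$ or $R_j$, hence is itself a cube in $\D(Q)$, and these cover $A\cap B$ since a point of $A\cap B$ lies in some $P_i$ and some $R_j$.

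The key combinatorial step is then the pointwise weight inequality: for each fixed pair $(i,j)$ with $P_i\cap R_j\neq\emptyset$ we want to charge $\lambda(P_i\cap R_j)$ against $\lambda(P_i)$ or $\lambda(R_j)$ in such a way that the total bookkeeping works. Concretely, I would reorganize the sum by grouping: for a fixed $i$, the cubes $R_j$ meeting $P_i$ split into those contained in $P_i$ (for which $P_i\cap R_j = R_j$) and those containing $P_i$ (for which $P_i \cap R_j = P_i$). A cleaner route avoiding double-counting is to refine both covers to a common "stopping" family — replace each $P_i$ and each $R_j$ by the maximal dyadic subcubes on which the covers stabilize — but the most economical argument is simply the two-set identity at the level of indicator-type sums: using that $\contH$ is an outer measure (subadditive) one has $\contH(A\cup B) \leq \sum \lambda$ over the union cover, and for the intersection one observes that each point of $A\cap B$ is covered, and the family $\{P_i\cap R_j\}$ can be re-indexed so that $\sum_{i,j}\lambda(P_i\cap R_j) \leq \sum_i\lambda(P_i)+\sum_j\lambda(R_j) - \sum(\text{union cover weights})$ by a cube-by-cube comparison, which reduces to the elementary fact that for nested cubes $\lambda(\text{smaller}) \leq \lambda(\text{larger})$ handles the cross terms. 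Adding the two estimates and letting $\epsilon\to 0$ gives the claim.

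The main obstacle is the bookkeeping in that middle step: one must be careful that the cover of $A\cup B$ and the cover of $A\cap B$ are built from the \emph{same} raw data $\{P_i\},\{R_j\}$ and that no cube's weight is spent twice. The honest way to pin this down is to first pass to disjointifications — sort all the cubes appearing in $\{P_i\}_i\cup\{R_j\}_j$ by inclusion, keep only the maximal ones in each family, and then argue on the resulting non-overlapping refined families, where the identity $\mathbf{1}_{A\cup B}+\mathbf{1}_{A\cap B} = \mathbf{1}_A+\mathbf{1}_B$ has a clean combinatorial shadow: a maximal refined cube either lies in the union cover with multiplicity one and in the intersection cover with multiplicity zero, or vice versa, or contributes once to each side, matching its two appearances on the right. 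Since this is exactly the content of Proposition 3.5 in \cite{STW}, I would either cite that result directly or reproduce this short argument, whichever the exposition demands.
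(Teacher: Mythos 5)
The paper gives no proof of Proposition \ref{strong-subadditivity} at all --- it simply cites Proposition 3.5 of \cite{STW} --- so your option to cite that result is exactly what the paper does, and your closing sketch also captures the standard argument correctly: disjointify each cover, take the maximal cubes of the combined multiset $\{P_i\}\cup\{R_j\}$ as a cover of $A\cup B$, and take the remaining (non-maximal, or duplicate) cubes as a cover of $A\cap B$, which works by dyadic dichotomy (a point of $A\cap B$ lies in some $P_i$ and some $R_j$ that are nested, and the smaller of the two, or a surviving duplicate copy if they coincide, is in the remainder) and gives an exact weight identity because the two subfamilies partition the multiset. One caution about your middle passage: the posited inequality $\sum_{i,j}\lambda(P_i\cap R_j) \leq \sum_i\lambda(P_i) + \sum_j\lambda(R_j) - \sum(\text{union cover weights})$ is not a consequence of monotonicity of $\lambda$ and does not go through as stated, since the double sum over pairs $(i,j)$ can repeat the same cube many times; the correct intersection cover is a selection from the combined multiset, not the full array of pairwise intersections. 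You recognize this and fall back on the disjointification, which is the right move, so this is a matter of exposition rather than a gap in the argument you ultimately land on.
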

	
	It is known that the Choquet integral constructed from a monotone increasing set function is sublinear if and only if the set function is strongly subadditive (see, e.g. \cite[Theorem 1.2]{PonceSpector}.  This fact, in combination with Proposition \ref{strong-subadditivity}, yields that the Choquet integral with respect to $\contH$ is sublinear i.e., for any $f,\,g:\rn \rightarrow [0,\infty]$, we have
	\begin{align}\label{sub-lin}
		\int (f+g) \dcontH \leq \int f \dcontH +\int g \dcontH.
	\end{align}
	
	This sublinearity, and the definition of the Choquet integral, imply the following results which are useful for our purposes.
	\begin{proposition}\label{sublinear}
		The Choquet integral with respect to the content $\contH$ satisfies the following properties:
		\begin{enumerate}
			\item  If $c \geq 0$, $f \geq 0$, then
			\begin{align*}
				\int cf\dcontH = c \int f\dcontH;
			\end{align*}
			\item \label{sub} For non-negative functions $f_n$,  
			\begin{align*}
				\int \sum_{n=1}^\infty f_n\dcontH \leq \sum_{n=1}^\infty \int f_n\dcontH.
			\end{align*}
		\end{enumerate}
	\end{proposition}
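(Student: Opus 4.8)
The plan is to obtain both statements directly from the layer-cake definition \eqref{choq-int}, using the sublinearity \eqref{sub-lin} (itself a consequence of the strong subadditivity of Proposition~\ref{strong-subadditivity}) only for the second.

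For (1), I would dispose of $c=0$ at once, since both sides are then $0$; for $c>0$, the equivalence $cf(x)>t\iff f(x)>t/c$ gives $\{cf>t\}=\{f>t/c\}$ for every $t>0$, and the substitution $s=t/c$ in \eqref{choq-int} yields
\[
\int cf \dcontH=\int_{0}^{\infty}\contH(\{f>t/c\})\,dt=c\int_{0}^{\infty}\contH(\{f>s\})\,ds=c\int f \dcontH ,
\]
an identity valid whether or not the integral is finite, and using nothing about $\contH$ beyond monotonicity.

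For (2), I would first record the finite case by iterating \eqref{sub-lin}: for every $N\in\N$,
\[
\int \sum_{n=1}^{N} f_n \dcontH \le \sum_{n=1}^{N}\int f_n \dcontH \le \sum_{n=1}^{\infty}\int f_n \dcontH ,
\]
and then pass to the limit $N\to\infty$ in the left-most member. With $h_N:=\sum_{n=1}^{N}f_n\uparrow h:=\sum_{n=1}^{\infty}f_n$ pointwise, the superlevel sets increase, $\{h_N>t\}\uparrow\{h>t\}$ for each $t>0$, so $N\mapsto\contH(\{h_N>t\})$ is nondecreasing with supremum at most $\contH(\{h>t\})$; granting the reverse inequality $\contH(\{h>t\})\le\sup_N\contH(\{h_N>t\})$, the monotone convergence theorem for the Lebesgue integral in $t$ gives
\[
\int h \dcontH=\int_{0}^{\infty}\contH(\{h>t\})\,dt=\lim_{N\to\infty}\int_{0}^{\infty}\contH(\{h_N>t\})\,dt=\lim_{N\to\infty}\int h_N \dcontH ,
\]
which together with the previous display proves (2).

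The step I expect to be the main obstacle is the reverse inequality just granted --- equivalently, the continuity of $\contH$ from below, $\contH(A_j)\to\contH(A)$ whenever $A_j\uparrow A$ (equivalently, for our purposes, the monotone convergence theorem for the Choquet integral against $\contH$). This is not a formal consequence of the outer-measure axioms together with strong subadditivity; it rests on the structure of $\contH$ as an infimum over covers from the \emph{countable} lattice $\D(Q)$, together with $\lim_{|Q|\to 0^{+}}\lambda(Q)=0$. The route I would take: given near-optimal nonoverlapping dyadic covers $\mathcal{F}_j$ of the $A_j$ with $\lambda$-mass $\le\sup_j\contH(A_j)+\epsilon$, use that for each $\eta>0$ every $\mathcal{F}_j$ has only boundedly many cubes of $\lambda$-mass $\ge\eta$ and that those cubes have sidelength bounded below, to extract a limiting cover of $A$ of comparable $\lambda$-mass, modulo a residual set covered only by cubes of arbitrarily small sidelength, hence of zero content; this gives $\contH(A)\le\sup_j\contH(A_j)+\epsilon$ and then the claim. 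I would either carry this delicate argument out in detail or, more economically, quote the corresponding continuity / monotone-convergence property of Choquet integrals against dyadic Hausdorff contents from \cite{STW, PonceSpector}. The rest --- the change of variables in (1), the iteration of \eqref{sub-lin}, and the one-dimensional monotone convergence --- is routine.
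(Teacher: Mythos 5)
Your part (1) is correct and is exactly the intended argument: dispose of $c=0$, then substitute in the layer-cake formula.

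For part (2), your structure is right and you have correctly identified the crux: finite subadditivity of the Choquet integral (from strong subadditivity of $\contH$) iterates immediately, but the passage $N\to\infty$ requires continuity from below of $\contH$ along increasing sequences of sets (equivalently, monotone convergence for the Choquet integral), and this is genuinely \emph{not} a formal consequence of monotonicity, countable subadditivity, and strong subadditivity. The paper gives no proof at all for this proposition --- it asserts that it follows from ``the sublinearity and the definition of the Choquet integral'' --- so in this sense you are being more careful than the source; your observation is precisely the point that the one-line justification elides, and appealing to the increasing sets lemma for Hausdorff content (which is a known and nontrivial structural fact, traceable to Davies and discussed in the cited literature) is the correct thing to do.

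However, your sketch of the continuity-from-below argument contains a concrete error. After the diagonal extraction, you claim the residual set --- the points of $A$ whose covering cubes from $\mathcal{F}_j$ shrink to zero sidelength along the subsequence --- has zero content ``because it is covered only by cubes of arbitrarily small sidelength.'' That inference is false: being coverable by arbitrarily small cubes says nothing about content (the unit interval is covered by cubes of sidelength $<\delta$ for every $\delta>0$ and has content $1$). What must be controlled is the \emph{$\lambda$-mass} left over for the small cubes after the large cubes have stabilized, and making that quantitative is exactly where the delicacy of the increasing sets lemma lives; you cannot get it just from the sidelengths going to zero. Since you hedge by offering to cite the result instead, the overall proposal still stands, but the sketched direct proof would not go through as written.
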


	Let $Q_0\subset\rn$ be a cube, possibly an infinite cube.  The inequality \eqref{sub-lin} is just the triangle inequality for the positively homogeneous functional
	\begin{align}\label{choquet_def}
		\|f\|_{L^1(Q_0;\contH)}\vcentcolon= \int_{Q_0} |f|\dcontH,
	\end{align}
	which when restricted to a suitable class of functions yields a normed, complete space.  To this end, we next introduce the notion of $\contH$-quasicontinuity.
	\begin{definition}
		We say that a function $f$ is $\contH$-quasicontinuous if  for every $\epsilon>0$, there exists an open set $O_\epsilon$ such that $\contH(O_\epsilon)<\epsilon$ and $f|_{O_\epsilon^c}$ is continuous.
	\end{definition}
	We can now define a natural Banach space associated with the content.  For $Q_0\subset\rn$, we define
	\begin{align}\label{l1}
		L^1(Q_0;\contH)\vcentcolon= \left\{ f\; \text{ is } \contH\text{-quasicontinuous } : \|f\|_{L^1(Q_0;\contH)}<+\infty\right\}.
	\end{align}
	We adopt the notation $L^1(\contH) = L^1(\mathbb{R}^n;\contH) $.
	
	We have the following theorem for this function space.  
	\begin{theorem}\label{functionspace}
		Let $Q_0 \subset \mathbb{R}^n$ be a cube and $L^1(Q_0;\contH)$ be defined as \eqref{l1}.  Then
		\begin{enumerate}
			\item \eqref{choquet_def} is a norm;
			\item $L^1(Q_0;\contH)$ is a Banach space;
			\item The space of continuous and bounded functions in $Q_0$, denoted as $C_b(Q_0)$, for which \eqref{choquet_def} is finite is dense in $L^1(Q_0;\contH)$.
		\end{enumerate}
	\end{theorem}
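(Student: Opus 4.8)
The plan is to derive (1) from the sublinearity of the Choquet integral recorded above, (2) from the classical criterion that a normed space is complete exactly when every absolutely summable series converges in it, and (3) by truncating and then invoking Tietze's extension theorem. Two elementary facts will be used throughout. The first is the Chebyshev inequality $\contH(\{|g|>t\})\le t^{-1}\int|g|\dcontH$, immediate from \eqref{choq-int} and the monotonicity in $t$ of $t\mapsto\contH(\{|g|>t\})$. The second is a quasi-outer regularity of $\contH$: for every $E\subset\rn$ and $\epsilon>0$ there is an open $U\supset E$ with $\contH(U)\le 3^n\contH(E)+\epsilon$, obtained by replacing each cube of a near-optimal dyadic cover of $E$ by a slightly larger concentric open cube and re-covering the latter by at most $3^n$ dyadic cubes of the same side length; in particular every $\contH$-null set lies in open sets of arbitrarily small content. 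This is the analogue of the standard outer regularity of $\mathcal H^\beta_\infty$ and holds precisely because $\lambda$ is only ever evaluated on the cubes of a covering.

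For (1): positive homogeneity is Proposition \ref{sublinear}(1), and the triangle inequality for \eqref{choquet_def} is \eqref{sub-lin} combined with $|f+g|\le|f|+|g|$ and monotonicity of the content. If $\|f\|_{L^1(Q_0;\contH)}=0$, then $\contH(\{|f|>t\})=0$ for a.e.\ $t>0$, hence for every $t>0$ since $t\mapsto\contH(\{|f|>t\})$ is non-increasing, so $\contH(\{f\ne 0\})\le\sum_{k}\contH(\{|f|>1/k\})=0$; thus $f=0$ $\contH$-quasi everywhere, and \eqref{choquet_def} is a norm once functions coinciding $\contH$-quasi everywhere are identified. This identification is consistent with the definition \eqref{l1} because, by the quasi-outer regularity above, modifying a quasicontinuous function on a $\contH$-null set leaves it quasicontinuous.

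For (2): let $\{f_n\}\subset L^1(Q_0;\contH)$ with $\sum_n\|f_n\|_{L^1(Q_0;\contH)}<\infty$ and put $g:=\sum_n|f_n|$. By Proposition \ref{sublinear}(2), $\int_{Q_0}g\dcontH\le\sum_n\|f_n\|<\infty$, so Chebyshev gives $\contH(\{g=\infty\})=0$ and $f:=\sum_n f_n$ converges absolutely $\contH$-quasi everywhere (set $f:=0$ on the exceptional null set); writing $S_N:=\sum_{n=1}^N f_n$, again by Proposition \ref{sublinear}(2) one has $\|f-S_N\|_{L^1(Q_0;\contH)}\le\int_{Q_0}\sum_{n>N}|f_n|\dcontH\le\sum_{n>N}\|f_n\|\to 0$, so $S_N\to f$ in norm and $\|f\|_{L^1(Q_0;\contH)}\le\sum_n\|f_n\|<\infty$. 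The substantive point is that $f$ is $\contH$-quasicontinuous. Each $S_N$ is (a finite sum of quasicontinuous functions is quasicontinuous: take the union of the exceptional open sets). Choosing $N_1<N_2<\cdots$ with $\sum_{n>N_k}\|f_n\|\le 4^{-k}$, Chebyshev gives $\contH(\{|S_{N_{k+1}}-S_{N_k}|>2^{-k}\})\le 2^{-k}$, so the set $Z_m$ consisting of $\{g=\infty\}$ together with $\bigcup_{j\ge m}\{|S_{N_{j+1}}-S_{N_j}|>2^{-j}\}$ has $\contH(Z_m)\le 2^{1-m}$, and off $Z_m$ the sequence $(S_{N_k})$ is uniformly Cauchy, hence converges uniformly to $f$. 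Given $\epsilon>0$, pick $m$ with $2^{1-m}$ small and, by quasi-outer regularity, an open $W\supset Z_m$ with $\contH(W)<\epsilon/2$; for each $k$ pick an open $O_k$ with $\contH(O_k)<\epsilon 2^{-k-1}$ and $S_{N_k}|_{O_k^c}$ continuous. On the complement of the open set $W\cup\bigcup_k O_k$, which has content $<\epsilon$, every $S_{N_k}$ is continuous and $S_{N_k}\to f$ uniformly, so $f$ is continuous there; hence $f$ is quasicontinuous, $f\in L^1(Q_0;\contH)$, and the space is complete. I expect this quasicontinuity of the limit to be the main obstacle: since sub-level sets of a merely quasicontinuous function need not be open, one cannot enclose the "slow-convergence" sets in small open sets without first establishing the quasi-outer regularity of $\contH$, after which the Borel--Cantelli/Egorov-type argument above closes the gap.

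For (3): given $f\in L^1(Q_0;\contH)$ and $M>0$, the truncation $f_M:=\max(\min(f,M),-M)$ is quasicontinuous (post-composition with a continuous map) and $|f-f_M|=(|f|-M)^{+}$, so by \eqref{choq-int} one has $\|f-f_M\|_{L^1(Q_0;\contH)}=\int_M^\infty\contH(\{|f|>s\})\,ds\to 0$ as $M\to\infty$; thus it suffices to approximate a bounded quasicontinuous $f$, $|f|\le M$, by $C_b(Q_0)$. Given $\epsilon>0$, take an open $O$ with $\contH(O)<\epsilon$ and $f|_{O^c}$ continuous, and extend $f|_{O^c}$ by the Tietze extension theorem to $\tilde f\in C_b(Q_0)$ with $\|\tilde f\|_\infty\le M$. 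Since $\tilde f=f$ on $O^c$ we have $\{|f-\tilde f|>t\}\subset O$ for every $t$, and $|f-\tilde f|\le 2M$, so $\|f-\tilde f\|_{L^1(Q_0;\contH)}\le 2M\,\contH(O)<2M\epsilon$, while $\|\tilde f\|_{L^1(Q_0;\contH)}\le\|f\|_{L^1(Q_0;\contH)}+2M\epsilon<\infty$; letting $\epsilon\to 0$ and then $M\to\infty$ shows $C_b(Q_0)$ is dense, and together with (2) identifies $L^1(Q_0;\contH)$ with the closure of $C_b(Q_0)$ in the norm \eqref{choquet_def}.
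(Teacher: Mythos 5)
Your proof is a self-contained argument, whereas the paper itself only cites Propositions 6.1 and 3.2 of the Ponce--Spector reference (the former gives (1),(2) for any monotone, strongly subadditive, countably subadditive set function; the latter gives (3) for any monotone, finitely subadditive one). The sticking point in your version is the ``quasi-outer regularity of $\contH$'' you rely on in both item (1) (to make quasi-everywhere identification compatible with quasicontinuity) and item (2) (to trap the slow-convergence set $Z_m$ inside a small open set). Your justification --- enlarge each cube of a near-optimal dyadic cover and re-cover the enlargement by at most $3^n$ dyadic cubes of the same generation --- tacitly uses that all $3^n$ of those neighbouring cubes carry comparable $\lambda$-values. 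That is automatic when $\lambda(Q')=\varphi(\ell(Q'))$ (so for $\cont$) and also when $\lambda(\cdot)=\C(\cdot)$ with $\C$ doubling, but Theorem~\ref{functionspace} places no such hypothesis on $\lambda$, and a general monotone $\lambda:\D(Q)\to[0,\infty]$ can assign wildly different values to adjacent cubes of equal side length. Since the paper does apply Theorem~\ref{functionspace} to $\contC$ in Section~\ref{capacities}, and since it is stated without doubling, this is a genuine gap as written.

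The gap can be closed without any regularity assumption on $\contH$ by noting that super-level sets of quasicontinuous functions are almost open in the following sense: if $g$ is $\contH$-quasicontinuous with exceptional open set $O$, $\contH(O)<\delta$, then $\{x\in O^c:|g(x)|>t\}$ is relatively open in $O^c$, say equal to $W\cap O^c$ with $W$ open; then $W\cup O$ is open, contains $\{|g|>t\}$, and subadditivity plus monotonicity give $\contH(W\cup O)\le\contH(W\cap O^c)+\contH(O)\le\contH(\{|g|>t\})+\delta$. Each $\{|S_{N_{j+1}}-S_{N_j}|>2^{-j}\}$ is such a super-level set; and $\{g=\infty\}\subset\{g>M\}=\bigcup_N\{g_N>M\}$, with $g_N:=\sum_{n\le N}|f_n|$ quasicontinuous and $W_N\cup O_N\subset\{g>M\}\cup O_N$, so $\bigcup_N(W_N\cup O_N)$ is an open superset of $\{g>M\}$ of content $\le\contH(\{g>M\})+\sum_N\contH(O_N)$, which is small for $M$ large. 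This encloses $Z_m$ directly. Likewise in item (1) the set $\{f\ne g\}=\bigcup_k\{|f-g|>1/k\}$ (for $f,g$ quasicontinuous) can be enclosed this way. With these substitutions your argument covers general $\contH$. Item (3) is correct as written and is essentially the truncation-plus-Tietze argument of the cited Proposition 3.2.
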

	
	\begin{proof}
		As $\contH$ is a strongly subadditive outer measure, items (i), (ii) follow from \cite[Proposition 6.1]{PonceSpector}, which asserts the validity of such a result for any monotone, strongly subadditive, countably subadditive set functions.  Item (iii) follows from  \cite[Proposition 3.2]{PonceSpector}, where the density of bounded, continuous functions in the space of $\contH$-quasicontinuous functions with finite integral is shown for monotone, finitely subadditive set functions.
	\end{proof}
	
\begin{remark}\label{density_lp}
In the sequel, we will also have reason to work with
 \begin{align*}
		L^p(\mathbb{R}^n;H)\vcentcolon= \left\{ f\; \text{ is } H\text{-quasicontinuous } : \|f\|_{L^p(\mathbb{R}^n;H)}<+\infty\right\}, ~ 1\leq p <\infty,
	\end{align*}
for $H=$ an outer capacity $\C$ or outer measure, where quasicontinuity is defined analogously to the case of $\contH$.  The relevant fact here is that one still has a density of the space of continuous and bounded functions $C_b(\mathbb{R}^n)$ for which \eqref{choquet_def} is finite in $L^p(\mathbb{R}^n;H)$, which follows from minor modifications of the proof of  \cite[Proposition 3.2]{PonceSpector}.
\end{remark}

	Besides strong subadditivity, one useful consequence of the definition of general Hausdorff contents is that they satisfy the packing assumption, as we next show in 
	\begin{proposition}\label{deco-cube-int}
		Consider a nonoverlapping dyadic collection $\{Q_{j}\}_j$ in $\Dq$. Moreover, if there exists a constant $A_{0}\geq 1$ such that	
		\begin{equation}\label{pack-int-hyp-1}
			\sum_{Q_{j} \subset Q^\prime} \contH\left(Q_{j}\right) \leq A_{0} \, \contH(Q^\prime) 
		\end{equation}	
		holds for each $Q^\prime \in \Dq$, then 	
		\begin{equation}\label{pack-int-hyp-2}
			\sum_{j} \int_{Q_{j}} f \dcontH \leq A_0\int_{\cup_{j} Q_{j}} f \dcontH.
		\end{equation}
	\end{proposition}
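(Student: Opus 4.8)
The plan is to reduce the integral inequality \eqref{pack-int-hyp-2} to a set-theoretic one via the layer-cake formula, and then to run a covering argument that uses only the nesting dichotomy of dyadic cubes together with the hypothesis \eqref{pack-int-hyp-1}. We take $f:\rn\to[0,\infty]$, so that all Choquet integrals are well defined. By the definition \eqref{choq-int}, for each $j$ one has $\int_{Q_j} f\dcontH=\int_0^\infty\contH(\{f>t\}\cap Q_j)\dt$ and likewise $\int_{\cup_j Q_j} f\dcontH=\int_0^\infty\contH(\{f>t\}\cap\bigcup_j Q_j)\dt$; since the integrands are nonnegative, Tonelli's theorem lets us interchange the (countable) sum over $j$ with $\int_0^\infty(\cdot)\dt$. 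Thus \eqref{pack-int-hyp-2} will follow once one proves that for every $E\subset\rn$,
\begin{equation}\label{pack-set-ineq}
\sum_j\contH(E\cap Q_j)\le A_0\,\contH\Big(E\cap\bigcup_j Q_j\Big),
\end{equation}
which is then applied with $E=\{f>t\}$ for each $t>0$.

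To establish \eqref{pack-set-ineq}, fix $E$ and $\eps>0$ and pick a cover $\{R_i\}_i\subset\D(Q)$ of $E\cap\bigcup_j Q_j$ with $\sum_i\lambda(R_i)\le\contH(E\cap\bigcup_j Q_j)+\eps$; one may discard the $R_i$ that meet no $Q_j$. Since two dyadic cubes are either disjoint or nested, each surviving $R_i$ is of exactly one of two types: it is contained in a cube $Q_{j(i)}$, unique by the nonoverlapping property of the $Q_j$ (say $i\in\mathcal S$), or it is contained in none of the $Q_j$ and then strictly contains some of them (say $i\in\mathcal L$). Partition the indices of the family $\{Q_j\}$ accordingly: let $\mathcal A$ be the set of $j$ with $Q_j\subseteq R_i$ for some $i\in\mathcal L$, and let $\mathcal B$ be the remaining indices.

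For $j\in\mathcal B$, the nesting dichotomy forces any $R_i$ meeting $Q_j$ to satisfy $R_i\subseteq Q_j$, i.e.\ $i\in\mathcal S$ with $j(i)=j$; hence these $R_i$ cover $E\cap Q_j$, so $\contH(E\cap Q_j)\le\sum_{i\in\mathcal S:\,j(i)=j}\lambda(R_i)$, and summing (the index sets being disjoint over $j$) gives $\sum_{j\in\mathcal B}\contH(E\cap Q_j)\le\sum_{i\in\mathcal S}\lambda(R_i)$. For $j\in\mathcal A$, choose one $i(j)\in\mathcal L$ with $Q_j\subseteq R_{i(j)}$; then, using $\contH(E\cap Q_j)\le\contH(Q_j)$, grouping by the value of $i(j)$, and then applying \eqref{pack-int-hyp-1} to $Q'=R_i\in\D(Q)$ together with $\contH(R_i)\le\lambda(R_i)$, one gets
\begin{equation*}
\sum_{j\in\mathcal A}\contH(E\cap Q_j)\le\sum_{i\in\mathcal L}\ \sum_{Q_j\subseteq R_i}\contH(Q_j)\le A_0\sum_{i\in\mathcal L}\contH(R_i)\le A_0\sum_{i\in\mathcal L}\lambda(R_i).
\end{equation*}
Adding the two bounds and using $A_0\ge1$ yields $\sum_j\contH(E\cap Q_j)\le A_0\sum_i\lambda(R_i)\le A_0\big(\contH(E\cap\bigcup_j Q_j)+\eps\big)$, and letting $\eps\to0$ proves \eqref{pack-set-ineq}, hence the proposition.

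The main point to be careful about is the bookkeeping in the last paragraph: a single $Q_j$ may be engulfed by several of the large cubes $R_i$, which is why one fixes a choice $i(j)$ before summing; and \eqref{pack-int-hyp-1} is invoked not for the given family $\{Q_j\}$ but for the cubes $R_i$ of an arbitrary near-optimal cover, which is precisely why \eqref{pack-int-hyp-1} is assumed for \emph{all} $Q'\in\D(Q)$. Beyond that, everything is forced by the dyadic nesting dichotomy, and in particular no appeal to Proposition \ref{decom-cubes} is needed.
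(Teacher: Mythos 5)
Your proof is correct and follows essentially the same route as the paper's: both reduce \eqref{pack-int-hyp-2} via the layer-cake formula and Tonelli to a set-level inequality of the form $\sum_j \contH(E\cap Q_j)\le A_0\,\contH(E\cap\bigcup_j Q_j)$, and both establish it by taking a near-optimal dyadic cover, splitting its cubes into those nested inside some $Q_j$ and those engulfing some $Q_j$, bounding the first group by direct covering and subadditivity and the second by invoking \eqref{pack-int-hyp-1} for the cover cube as the ambient $Q'$. The only substantive difference is bookkeeping: the paper restricts at the outset to non-overlapping covers (so that each $Q_j$ lies in at most one cover cube), whereas you allow possibly overlapping covers and handle the potential multiplicity by selecting a single $i(j)$ per $j$ before summing -- both resolve the same double-counting issue, and the argument is otherwise identical.
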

	\begin{proof}
		We first claim that
		\begin{equation}\label{pack-int-eq-0}
			\sum_{j} \contH(A \cap Q_{j}) \leq A_0 \contH(A)
		\end{equation}
		holds for every $A \subseteq \cup_{j} Q_{j}$. To this end, let $\{\tilde{Q}_{k}\}_{k}$ be a collection of non overlapping cubes of $\Dq$ covering $A$ and we split the collection $\{\tilde{Q}_{k}\}_{k}$ as follows:
		\begin{align*}
			\{\tilde{Q}_{k} \}_{k}=\{\cup_{j} \mathcal{A}_{j} \}\bigcup \mathcal{A}_0,	
		\end{align*}
		where for each $j\geq1$, 
		\begin{align*}
			\mathcal{A}_{j}=\left\{\tilde{Q}_{k}: \tilde{Q}_{k} \text { is strictly contained in } Q_{j}\right\},
		\end{align*}
		and
		\begin{align*}
			\mathcal{A}_0=	\left\{\tilde{Q}_{k}: \tilde{Q}_{k} \text { contains or equals to one of } Q_{j}\right\}.
		\end{align*}

		Similarly, we also divide the collection $\{Q_{j}\}_{j}$ as follows:
		\begin{align*}
			\{{Q}_{j} \}_{j}= \{\cup_{k} \mathcal{B}_{k} \} \bigcup \mathcal{B}_0,		
		\end{align*}
		where for each $k\geq1$, 
		\begin{align*}
			\mathcal{B}_{k}=\left\{Q_{j}: Q_{j} \text { is contained in or equals to } \tilde{Q}_{k}\right\},
		\end{align*}
		and
		\begin{align*}
			\mathcal{B}_0=	\left\{Q_{j}: Q_{j} \text { strictly contains one of } \tilde{Q}_{k}\right\}.
		\end{align*}
		As $\{\tilde{Q}_{k}\}_{k}$ is a cover of $A$, then for $j \in \mathbb{N}$ satisfies $Q_{j} \in \mathcal{B}_{0}$, we have
		\begin{align*}
			\bigcup_{\tilde{Q}_{k} \in \mathcal{A}_{j}} \tilde{Q}_{k} \supset Q_{j} \cap A.	
		\end{align*}
		Then, using the countable subadditivity and the monotonicity of $\contH$, we get
		\begin{align*}
			\sum_{\tilde{Q}_{k} \in \mathcal{A}_{j}} \contH (\tilde{Q}_{k}) \geq \contH\left(\bigcup_{\tilde{Q}_{k} \in \mathcal{A}_{j}} \tilde{Q}_{k}\right) \geq \contH (Q_{j} \cap A),
		\end{align*}
		and taking both side summation over $j$ in above, we obtain
		\begin{align}\label{pack-int-eq-1}
			\sum_{j} \sum_{\tilde{Q}_{k} \in A_{j}} \contH(\tilde{Q}_{k}) \geq \sum_{Q_{j} \in \mathcal{B}_{0}} \contH(Q_{j} \cap A).
		\end{align}
		
		Moreover, by \eqref{pack-int-hyp-1}, we observe that for every $k \in \mathbb{N}$, we have
		\begin{align*}
			\contH(\tilde{Q}_{k}) \geq \frac{1}{A_0} \sum_{Q_{j} \subset \tilde{Q}_{k}} \contH (Q_{j}).
		\end{align*}
		Now taking summation over all the $k$ such that $\tilde{Q}_{k} \in \mathcal{A}_{0}$, we get
		\begin{align}\label{pack-int-eq-2}
			\sum_{\tilde{Q}_{k} \in \mathcal{A}_{0}} \contH(\tilde{Q}_{k}) & \geq \frac{1}{A_0} \sum_{k} \sum_{Q_{j} \subset \tilde{Q}_{k}} \contH (Q_{j}) \\
			& =\frac{1}{A_0} \sum_{k} \sum_{Q_{j} \in \mathcal{B}_{k}} \contH (Q_{j}) \notag \\
			& \geq \frac{1}{A_0} \sum_{k} \sum_{Q_{j} \in \mathcal{B}_{k}} \contH (Q_{j} \cap A)	\notag.
		\end{align}
		
		The combination of \eqref{pack-int-eq-1} and \eqref{pack-int-eq-2} yields that
		\begin{align*}
			\sum_{k} \contH (\tilde{Q}_{k}) \geq \frac{1}{A_0} \sum_{j} \contH (Q_{j} \cap A )	.
		\end{align*}
		By definition of $\contH$, we have $\sum_{k} \lambda (\tilde{Q}_{k})\geq \sum_{k} \contH (\tilde{Q}_{k})$. Next, taking the infimum over all the families of nonoverlapping cubes of $\Dq$, $\{\tilde{Q}_{k}\}_{k}$ covering $A$, we obtain
		\begin{align*}
			\contH(A)\geq \frac{1}{A_0} \sum_{j} \contH (Q_{j} \cap A )
		\end{align*}
		and this proves the claim \eqref{pack-int-eq-0}. Consequently, we obtain
		\begin{align*}
			\sum_{j} \int_{Q_{j}} f \dcontH & =\sum_{j} \int_{0}^{\infty} \contH(\left\{x \in Q_{j}: f(x)>t\right\}) \dt \\
			& =\int_{0}^{\infty} \sum_{j} \contH(\left\{x \in Q_{j}: f(x)>t\right\}) \dt \\
			& \leq \int_{0}^{\infty}A_0 \contH(\left\{x \in \cup_{j} Q_{j}: f(x)>t\right\})\dt \\
			& =A_0 \int_{\cup_{j} Q_{j}} f \dcontH
		\end{align*}
		which completes the proof.	
	\end{proof}

	\begin{remark}
		Because $\contH$ is a monotone increasing set function, the packing decomposition established in Proposition \ref{decom-cubes} is satisfied by $\contH$.  In particular, for $\{Q_j\}_j \subset \mathcal{D}(Q)$ a family of non-overlapping dyadic cubes, there exists a subfamily $\{Q_{j_k}\}_k$ and a family of disjoint ancestors $\{\tilde{Q}_k\}_k$ such that the following conditions hold:
		\begin{itemize}
			\item[(i)] $\bigcup_j Q_j \subset \bigcup_k Q_{j_k} \cup \bigcup_k \tilde{Q}_k$,
			\item[(ii)] $\sum_{Q_{j_k} \subset Q'} \contH(Q_{j_k}) \leq 2 \contH(Q'), \quad \text{for each} \ Q' \in \mathcal{D}(Q)$,
			\item[(iii)] For each $\tilde{Q}_k$, there holds $\contH(\tilde{Q}_k) \leq \sum_{Q_{j_i} \subset \tilde{Q}_k} \contH(Q_{j_i})$.
		\end{itemize}
	\end{remark}

	\section{Dyadic Analysis for Outer Measures}\label{sec:Dyadic Analysis for Outer Measures}
	
	In this section, we give the proofs of our results for general outer measures, Theorems \ref{max-new} and \ref{Lebesgue-differentiation-new-dyadic}.  We note that the definition of the Choquet integral with respect to an outer measure is as defined in \eqref{choq-int}.
	In the sequel, we adopt the notation 
	\begin{align*}
		a \cdot \infty = \infty \cdot a = 
		\begin{cases} 
			\infty & \text{if } 0 < a \leq \infty, \\
			0 & \text{if } a = 0.
		\end{cases}
	\end{align*}
	In particular, we have
	\begin{align}\label{Rudinnotation}
		\frac{1}{\C(Q')} \int_{Q'} |f| \, \dc = 0 \quad \text{ if }\, \C(Q') = 0.
	\end{align}

	We first prove the maximal function estimate.
	
	\begin{proof}[Proof of Theorem a]
		(i)	Without loss of generality, we suppose that $f$ is non-negative.  By the definition of the dyadic maximal function, for every $t>0$, we have 
		\begin{align*}
			\left\{ x\in \rn \, :\, M^d_{\C} f(x) >t\right\} = \bigcup_{i=1}^\infty Q_i,
		\end{align*}
		where $\{Q_i\}_i\subset\Dq$ are non overlapping and each of the $Q_i$ satisfies
		\begin{align}\label{maximal_cube_t}
			\C(Q_i) < \frac{1}{t} \int_{Q_i} f \dc.
		\end{align}
		(We note that $\C(Q_i)$ must bigger than zero by (\ref{Rudinnotation})).
		Let $\{Q_j \}_j$ be the maximal collection of these dyadic cubes, each $Q_j$ of which satisfies \eqref{maximal_cube_t} and let $\{Q_{j_k}\}$ and $\{\tilde{Q}_k\}$ be the families of cubes obtained by an application of Proposition \ref{decom-cubes} to this family.  By only selecting necessary cubes,  $Q_{j_k} \not \subseteq \tilde{Q}_m$ for some $Q_m$, in the original family with $\tilde{Q}_m$ selected as a covering cube, we have 
		\begin{align*}
			\C \left(\left\{ x\in \rn \, :\, M^d_{\C} f(x)>t\right\}\right) &\leq \sum_{k, Q_{j_k} \not \subseteq \tilde{Q}_m} \C(Q_{j_k}) +\sum_k \C(\tilde{Q}_k) \\
			&\leq \sum_{k} \C(Q_{j_k}).
		\end{align*}
		However, since $Q_{j_k}$ satisfies \eqref{maximal_cube_t} we find that
		\begin{align*}
			\C \left(\left\{ x\in \rn \, :\, M^d_{\C} f(x) >t\right\}\right)  \leq  \sum_{k} \frac{1}{t} \int_{Q_{j_k}} f \dc.
		\end{align*}
		Finally, inequality  \eqref{pack-int-hyp-2} with $A_0 =2$ implies
		\begin{align*}
			\sum_{k} \frac{1}{t} \int_{Q_{j_k}} f \dc \leq \frac{2}{t} \int_{\cup_k Q_{j_k}} f \dc \leq \frac{2}{t}\int_{\rn}f \dc,
		\end{align*}
		which gives the desired conclusion of part (i).
		
		We next give the proof of part (ii). It is easy to notice that $M^d_{\C}$ is quasi-linear as for any $x\in \rn$, we have
		\begin{align*}
			|M^d_{\C} (f+g)(x)|\leq 2 \left( |M^d_{\C} f(x)| + |M^d_{\C} g(x)|\right).
		\end{align*}
		Since, 
		\begin{align*}
			||f||_{L^\infty(\C)}:=\inf\{t>0\, :\, \C(\{y\in \rn \, :\, |f(y)|>t\})=0\}
		\end{align*}
		and therefore for any $Q^\prime\in\Dq$ and $t>||f||_{L^\infty(\C)}$, 
		\begin{align*}
			\C(\{y\in Q^\prime\,:\, |f(y)|>t\}) =0.
		\end{align*}
		So, we deduce
		\begin{align*}
			\frac{1}{\C(Q')} \int_{Q'} |f| \, d\C &	= \frac{1}{\C(Q')} \int_0^\infty \C(\{y\in Q^\prime\,:\, |f(y)|>t\})\dt\\&=\frac{1}{\C(Q')} \int_0^{||f||_{L^\infty(\C)}} \C(\{y\in Q^\prime\,:\, |f(y)|>t\})\dt\\& \leq ||f||_{L^\infty(\C)}.
		\end{align*}
		Finally, taking the supremum of all $Q^\prime\in \Dq$, we get
		\begin{align*}
			||M^d_{\C} f||_{L^\infty(\C)} \leq ||f||_{L^\infty(\C)}.
		\end{align*}
		Hence, applying \cite[Lemma 2.5]{ChenOoiSpector} for $T=M^d_{\C}$, we complete the proof of part (ii).
	\end{proof}
	

We now proceed to give the proof of Theorem \ref{Lebesgue-differentiation-new-dyadic}.
	
	\begin{proof}[Proof of Theorem \ref{Lebesgue-differentiation-new-dyadic}]

Let  $f \in L^1(\mathbb{R}^n; \C)$.  The claim of the theorem is equivalent to the assertion that
		\begin{align*}
			\C\left( \left\{ x\in \mathbb{R}^n : \limsup_{Q' \to x} \frac{1}{\C(Q')} \int_{Q'} \left|f - f(x) \right| \dc >0 \right\}\right)=0.
		\end{align*}
		
For $k \in \mathbb{N}$, define
		\begin{align*}
			A_k\vcentcolon= \left\{ x\in \mathbb{R}^n : \ \frac{1}{k} < \limsup_{Q' \to x} \frac{1}{\C(Q')} \int_{Q'} \left|f - f(x) \right| \dc \right\},
		\end{align*}
		and observe that
		\begin{align*}
			\left\{ x\in \mathbb{R}^n :  \limsup_{Q' \to x} \frac{1}{\C(Q')} \int_{Q'} \left|f - f(x) \right| \dc>0 \right\} = \bigcup_{k=1}^\infty A_k.
		\end{align*}
		Therefore, by countable subadditivity of $\C$, it suffices to show that $\C(A_k)=0$ for each $k \in \mathbb{N}$.

		Since $f \in L^1(\mathbb{R}^n; \C)$, by Proposition 3.2 of \cite{PonceSpector} (see Remark \ref{density_lp} in Section \ref{sec:preliminaries}), there exists a sequence of functions $\{f_l\}_l \subset C_b(\mathbb{R}^n) \cap L^1(\mathbb{R}^n;\C)$ such that 
		\begin{align*}
			\int_{\mathbb{R}^n} |f - f_l| \dc \leq \frac{1}{l}.
		\end{align*}
		For any $Q' \in \Dq$ with $\C(Q') > 0$, using triangle inequality, we get
		\begin{align*}
			&  \frac{1}{\C(Q')} \int_{Q'} \left|f(\cdot) - f(x) \right| \dc \\
			&\leq \frac{1}{\C(Q')} \int_{Q'} \{ |f(\cdot) - f_l(\cdot)| + |f_l(\cdot) - f_l(x)|+ |f_l(x) - f(x)|\} \dc \\ & \leq \frac{1}{\C(Q')}\int_{Q'}  |f(\cdot) - f_l(\cdot)| \dc + \frac{1}{\C(Q')}\int_{Q'} |f_l(\cdot) - f_l(x)| \dc+ |f_l(x) - f(x)|.
		\end{align*}
		Since $f_l$ is continuous for each $l$, we have 
		\begin{align*}
			\limsup_{Q' \to x}\frac{1}{\C(Q')} \int_{Q'} |f_l(\cdot) - f_l(x)| \dc = 0. 
		\end{align*}
		It follows that for every $\epsilon> 0$, we have the set inclusion 
		\begin{align*}
			A_k &\subset \left \{ x\in \mathbb{R}^n : \  M^{d}_{\C} (|f - f_l|)(x) > \frac{1}{2k}   \right\} \cup \left\{ x\in \mathbb{R}^n : \  |f_l(x) - f(x)| > \frac{1}{2k}  \right\}\\& : = A^{1,l}_k \cup A^{2,l}_k.
		\end{align*}
		By monotonicity and subadditivity, we have
		\begin{align*}
			\C(A_k) \leq  \C(A^{1,l}_k) + \C(A^{2,l}_k). 
		\end{align*}
		Finally, an application of Theorem \ref{max-new} (i) to $A^{1,l}_k$ and Chebychev's inequality to $A^{2,l}_k$, along with the density of $C_b(\mathbb{R}^n)$, yields that 
		\begin{align*}
			\limsup_{l \to \infty} \C(A^{1,l}_k)=0, \quad \text{ and }\quad 	\limsup_{l \to \infty} \C(A^{2,l}_k)=0.
		\end{align*}
		This completes the proof. 
	\end{proof}

	\section{Maximal Estimates, Differentiation Theory, and Calder\'on-Zygmund Decomposition for Translation Invariant Hausdorff Contents}\label{Max-Diff-CZ}	
	
	In this section, we prove Theorems \ref{weak-type-cont-maximal}, \ref{Lebesgue-diff-cont}, \ref{cz}.  We begin by establishing several lemmas that will be useful in the proofs.
	
	\begin{lemma}\label{content_doubling_lemma}
		Suppose $\varphi : [0,\infty] \to [0,\infty]$ is a monotone increasing function such that $\lim_{t \to 0^+} \varphi(t)=0$ and let $\cont$ be the dyadic content given in \eqref{dyadiccontent} defined in terms of $\varphi$.  Then $\cont$ is doubling and $\cont(\tau+E) = \cont(E)$ for all translations $\tau$ which preserve the lattice $\mathcal{D}(Q)$ and for all $E\subset \mathbb{R}^n$.
	\end{lemma}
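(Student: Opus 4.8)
The plan is to prove the two assertions separately: the translation invariance is immediate from the definition of $\cont$ as an infimum over dyadic covers, while the doubling requires a short covering argument arranged so as not to invoke any doubling property of $\varphi$ itself (which in general fails, cf.\ Example \ref{spl-eg}).

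For the translation invariance, I would observe that if $\tau$ is a translation with $\tau + \mathcal{D}(Q) = \mathcal{D}(Q)$, then $Q' \mapsto \tau + Q'$ is a side-length preserving bijection of $\mathcal{D}(Q)$. Hence it carries any admissible cover $\{Q_i\} \subset \mathcal{D}(Q)$ of a set $E$ to an admissible cover $\{\tau + Q_i\} \subset \mathcal{D}(Q)$ of $\tau + E$ of the same cost $\sum_i \varphi(\ell(Q_i))$, and conversely. Passing to the infimum gives $\cont(\tau + E) \le \cont(E)$, and applying this to $-\tau$ (which is also lattice preserving) gives the reverse inequality, hence equality.

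For the doubling, I would reduce \eqref{global_doubling} to two ingredients. The first is the elementary fact that $B(x,2r)$ can be covered by $N = N(n)$ balls of radius $r$, with $N$ depending only on the dimension (intersect $B(x,2r)$ with a grid of cubes of side comparable to $r$, each of which lies in a radius-$r$ ball). The second is a quantitative quasi-translation-invariance for balls: a dimensional constant $C_n$ with $\cont(B(y,r)) \le C_n\, \cont(B(x,r))$ for all $x, y \in \rn$ and $r > 0$. Granting both, countable subadditivity of $\cont$ gives $\cont(B(x,2r)) \le \sum_{j=1}^{N} \cont(B(y_j, r)) \le N C_n\, \cont(B(x,r))$, i.e.\ \eqref{global_doubling} with $D_0 = N C_n$.

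It remains to establish the quasi-translation-invariance, and here lies the only real (though mild) obstacle: one cannot replace a covering cube $Q_i$ of $B(x,r)$ by a \emph{doubled} cube, since $\varphi(2\ell)/\varphi(\ell)$ may be unbounded. Instead, assuming $\cont(B(x,r)) < \infty$ (otherwise there is nothing to prove), fix $\epsilon > 0$ and pick $\{Q_i\} \subset \mathcal{D}(Q)$ covering $B(x,r)$ with $\sum_i \varphi(\ell(Q_i)) \le \cont(B(x,r)) + \epsilon$; with $v = y - x$, the family $\{Q_i + v\}$ covers $B(y,r)$ but need not lie in $\mathcal{D}(Q)$. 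The key point is that every axis-parallel cube of side $s$ is contained in the union of at most $2^n$ members of $\mathcal{D}(Q)$ of side exactly $s$: projecting onto each coordinate axis, a half-open interval of length $s$ meets at most two of the length-$s$ dyadic intervals tiling $\R$ and is covered by their union, and one then takes products (the infinite cubes of $\mathcal{D}(Q)$ being handled the same way, a translate of an infinite cube being covered by at most $2^n$ of the $2^n$ infinite cubes of $\mathcal{D}(Q)$). Applying this to each $Q_i + v$ gives cubes $R_i^1, \dots, R_i^{2^n} \in \mathcal{D}(Q)$, all of side $\ell(Q_i)$, whose union contains $Q_i + v$; then $\{R_i^m\}_{i,m}$ covers $B(y,r)$ and
\begin{align*}
	\cont(B(y,r)) \le \sum_{i,m} \varphi(\ell(R_i^m)) \le 2^n \sum_i \varphi(\ell(Q_i)) \le 2^n\bigl(\cont(B(x,r)) + \epsilon\bigr).
\end{align*}
Letting $\epsilon \to 0$ gives the claim with $C_n = 2^n$, hence $D_0 = N(n)\, 2^n$. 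Beyond this, the only things needing care are the bookkeeping for the infinite cubes of $\mathcal{D}(Q)$ and the routine dimensional count in the first ingredient; the essential structural point is that keeping all side lengths fixed, at the price of a fixed dimensional factor, is exactly what makes the argument work for an arbitrary monotone increasing $\varphi$.
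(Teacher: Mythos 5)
Your proof of the translation invariance is the same as the paper's: a lattice-preserving translation acts bijectively on $\mathcal{D}(Q)$ preserving side lengths, hence carries admissible covers to admissible covers of equal cost, and the infimum is invariant. For the doubling, you take a genuinely different route. The paper picks a largest dyadic cube $Q' \subset B(x,r)$, so that $\ell(Q')$ is comparable to $r$, observes that $B(x,2r)$ is covered by $M=M(n)$ lattice translates $\tau_j + Q'$ of $Q'$, and uses subadditivity together with $\cont(\tau_j+Q')=\cont(Q')$ to conclude $\cont(B(x,2r)) \le M\,\cont(Q') \le M\,\cont(B(x,r))$. You instead first establish a quasi-translation-invariance of $\cont$ on balls with constant $2^n$, by pushing a near-optimal dyadic cover of $B(x,r)$ forward under an arbitrary (non-lattice) translation and absorbing the misalignment with at most $2^n$ dyadic cubes of the \emph{same} side per covering cube, and then cover $B(x,2r)$ by $N(n)$ balls of radius $r$. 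Both arguments isolate the same essential structural point---keep all side lengths fixed and pay only a dimensional factor---which is exactly what allows the estimate to hold without any doubling hypothesis on $\varphi$ (cf.\ Example \ref{spl-eg}). Your version is a bit more elaborate because it routes through the intermediate quasi-translation-invariance lemma, but that lemma is a clean standalone fact of independent use; the paper's version is shorter since it passes directly through lattice translates of a single cube contained in $B(x,r)$.
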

	
	\begin{proof}
		First, for $E \subset \mathbb{R}^n$, we will prove that $\cont(\tau+E) = \cont(E)$ for all translations $\tau$ that preserve the lattice $\mathcal{D}(Q)$. 
		
		If 
		\begin{align*}
			E \subset \bigcup_{i} Q_i
		\end{align*}
		for $Q_i \in \mathcal{D}(Q)$, then
		\begin{align*}
			\tau + E \subset \bigcup_{i} \tau+Q_i,
		\end{align*}
		with $\tau+Q_i \in \mathcal{D}(Q)$. Since $\ell(Q_i)=\ell(\tau+Q_i)$ for each $i$ and $\tau$, therefore by definition of $\cont$, we get the desired equality.
		
		We next show that $\cont$ is doubling.  Let $B(x,r) \subset \mathbb{R}^n$ be given.  Then there exists a largest cube $Q'  \in \mathcal{D}(Q)$ such that $Q' \subset B(x,r)$ and translations on the lattice $\{\tau_j\}_{j=1}^M$ such that
		\begin{align*}
			B(x,2r) \subset \bigcup_{j=1}^M \tau_j + Q',
		\end{align*}
		where $M$ is a uniform dimension dependent constant.
		
		Then by monotonicity, subadditivity, and the fact that $\cont(Q')=\cont(\tau_j+Q')$, we obtain
		\begin{align*}
			\cont(B(x,2r)) &\leq \sum_{j=1}^M \cont(\tau_j + Q')\\
			&= M \cont(Q') \\
			&\leq M \cont(B(x,r)),
		\end{align*}
		which is the desired result.
	\end{proof}

	\begin{lemma}\label{eqivalencemaixmalleema}
		Suppose $\varphi : [0,\infty] \to [0,\infty]$ is a monotone increasing function such that $\lim_{t \to 0^+} \varphi(t)=0$ and let $\cont$ be the dyadic content given in \eqref{dyadiccontent} defined in terms of $\varphi$.  Then there holds
		\begin{align*}
			\cont ( \bigcup \{ 3Q': Q' \in \mathcal{D} (Q) \text{ and } Q' \subseteq E\} ) \leq 3^n \cont(E)
		\end{align*}
		for every $E \subset \mathbb{R}^n$.
	\end{lemma}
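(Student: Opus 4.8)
The plan is to reduce the content of the enlarged set $\bigcup\{3Q' : Q' \in \mathcal{D}(Q),\ Q' \subseteq E\}$ to the content of $E$ by exploiting an economical covering of $E$ together with the self-similar structure of the dyadic lattice. First I would fix an arbitrary cover $E \subset \bigcup_i Q_i$ with $Q_i \in \mathcal{D}(Q)$ and, by discarding redundant cubes, arrange that the $Q_i$ are nonoverlapping (replace the family by its maximal elements; since $\mathcal{D}(Q)$ is a nested dyadic family, any cover can be thinned to a nonoverlapping one without increasing $\sum_i \varphi(\ell(Q_i))$, using monotonicity of $\varphi$). The key observation is then the containment
\begin{align*}
	\bigcup \{ 3Q' : Q' \in \mathcal{D}(Q),\ Q' \subseteq E\} \subset \bigcup_i 3Q_i,
\end{align*}
which holds because any dyadic $Q' \subseteq E$ is contained in some $Q_i$ from the cover (as the $Q_i$ cover $E$ and are a nested family, the smallest member containing a given point of $Q'$ works, and dilating about centers respects containment of dyadic cubes up to the lattice structure), hence $3Q' \subseteq 3Q_i$.

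Next, each $3Q_i$ — while not itself a dyadic cube in $\mathcal{D}(Q)$ — can be covered by exactly $3^n$ nonoverlapping dyadic cubes of the same side length $\ell(Q_i)$, namely $Q_i$ together with its dyadic neighbors of equal size along the lattice. Denote these by $\{Q_{i,m}\}_{m=1}^{3^n}$, so that $3Q_i \subset \bigcup_{m=1}^{3^n} Q_{i,m}$ with $\ell(Q_{i,m}) = \ell(Q_i)$ and each $Q_{i,m} \in \mathcal{D}(Q)$. Then by the definition \eqref{dyadiccontent} of $\cont$ applied to the cover $\{Q_{i,m}\}_{i,m}$ of the enlarged set, together with $\varphi(\ell(Q_{i,m})) = \varphi(\ell(Q_i))$,
\begin{align*}
	\cont\Big( \bigcup \{ 3Q' : Q' \in \mathcal{D}(Q),\ Q' \subseteq E\} \Big)
	\leq \sum_i \sum_{m=1}^{3^n} \varphi(\ell(Q_{i,m}))
	= 3^n \sum_i \varphi(\ell(Q_i)).
\end{align*}
Taking the infimum over all dyadic covers $\{Q_i\}$ of $E$ yields the claimed bound $3^n\cont(E)$.

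The main obstacle is the set-theoretic containment $3Q' \subseteq 3Q_i$ for dyadic $Q' \subseteq Q_i$: one must be careful that the dilation by $3$ is performed about the center of each cube and verify that $Q' \subseteq Q_i$ (with both in $\mathcal{D}(Q)$) genuinely forces $3Q' \subseteq 3Q_i$, which requires checking that the centers and side lengths interact correctly — true here because $Q' \subseteq Q_i$ implies the center of $Q'$ lies within distance $\tfrac{1}{2}(\ell(Q_i)-\ell(Q'))$ of the center of $Q_i$ in each coordinate, so the concentric triple of $Q'$ stays inside the concentric triple of $Q_i$. The minor bookkeeping point is handling the infinite cubes in $\mathcal{D}(Q)$, where $3Q'$ should be interpreted as $Q'$ itself (an infinite cube is already "all of a half-space product"), and the estimate is trivial since $\varphi$ is only evaluated at finite side lengths in any economical cover, or the content is already infinite.
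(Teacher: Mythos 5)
The critical step, the claimed containment
\begin{align*}
\bigcup \{ 3Q' : Q' \in \mathcal{D}(Q),\ Q' \subseteq E\} \subseteq \bigcup_i 3Q_i,
\end{align*}
is false. Your justification, that ``any dyadic $Q'\subseteq E$ is contained in some $Q_i$ from the cover,'' confuses being covered with being contained in a single cover element: a non-overlapping dyadic cover of $E$ may consist entirely of cubes strictly smaller than some dyadic $Q'\subseteq E$, in which case $Q'$ is partitioned by several $Q_i$'s but contained in none of them, and ``the smallest member containing a given point of $Q'$'' is then smaller than $Q'$ itself. A concrete one-dimensional counterexample: take $E=[0,1)$ with the (already maximal, non-overlapping) cover $\{Q_1,Q_2\}=\{[0,\tfrac12),[\tfrac12,1)\}$. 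The cube $Q'=[0,1)$ is dyadic and contained in $E$, but $3Q'=[-1,2)$ is not contained in $3Q_1\cup 3Q_2=[-\tfrac12,\tfrac32)$. Consequently your family $\{Q_{i,m}\}$ does not cover the set on the left-hand side, and the final bound $\cont(\cdot)\le 3^n\sum_i\varphi(\ell(Q_i))$ does not follow. (The lemma's conclusion still holds in this example --- one can check $\cont([-1,2))\le 3\,\cont([0,1))$ directly --- but the efficient cover of $[-1,2)$ must be built at the scale of $Q'=[0,1)$, not at the scale of the $Q_i$.)

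The paper's route avoids this problem: rather than tripling cubes from a chosen economical cover of $E$, it decomposes each $3Q'$ directly as the union of $3^n$ equal-size lattice neighbors $\tau_i+Q'$, so that $\bigcup\{3Q':\ldots\}=\bigcup_{i=1}^{3^n}\bigcup\{\tau_i+Q':Q'\subseteq E\}$, and then invokes finite subadditivity together with the translation invariance of $\cont$ from Lemma~\ref{content_doubling_lemma} to bound each of the $3^n$ translated unions by $\cont(\bigcup\{Q':Q'\subseteq E\})\le\cont(E)$. The idea your proposal is missing is precisely the handling of those large $Q'\subseteq E$ that strictly contain cubes of the cover; any correct argument must account for them, either via the translation structure as the paper does, or by building the cover of $\bigcup\{3Q':\ldots\}$ from translates of the $Q_i$'s at the (larger) scale of the maximal dyadic cube of $E$ containing each $Q_i$.
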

	
	\begin{proof}
		Let $E \subset \mathbb{R}^n$.  Then by the properties of the lattice we have
		\begin{align*}
			\bigcup \{ 3Q': Q' \in \mathcal{D} (Q) \text{ and } Q' \subseteq E\} = \bigcup \bigcup_{i=1}^{3^n} \{ \tau_i+Q': Q' \in \mathcal{D} (Q) \text{ and } Q' \subseteq E\},
		\end{align*}
		for a family of translations $\{\tau_i\}_{i=1}^{3^n}$.  Thus, finite subadditivity, monotonicity of $\cont$ and Lemma \ref{content_doubling_lemma} gives
		\begin{align*}
			& \cont ( \bigcup \{ 3Q': Q' \in \mathcal{D} (Q) \text{ and } Q' \subseteq E\} ) \\
			&\leq \sum_{i=1}^{3^n} \cont (\bigcup \{ \tau_i+Q': Q' \in \mathcal{D} (Q) \text{ and } Q' \subseteq E\}) \\
			&= 3^n\cont (\bigcup \{ Q': Q' \in \mathcal{D} (Q) \text{ and } Q' \subseteq E\})\\
			&\leq 3^n \cont(E),
		\end{align*}
		which is the desired result.
		
	\end{proof}

	\begin{lemma}\label{equivoftwomaximal}
		Suppose $\varphi : [0,\infty] \to [0,\infty]$ is a monotone increasing function such that $\lim_{t \to 0^+} \varphi(t)=0$, 
		and let $\cont$ be the dyadic content associated to $\varphi$ given in \eqref{dyadiccontent}.
		There exist constants $C > 0$ and $c > 0$ such that 
		\begin{align*}
			\cont\big(\{x \in \mathbb{R}^n : \mathcal{M}_{\cont}f(x) > t\}\big) \leq C \cont\big(\{x \in \mathbb{R}^n : M^d_{\cont}f(x) > ct\}\big),
		\end{align*}
		for every function $f: \mathbb{R}^n \to \mathbb{R}$, where $M^d_{\cont}$ denotes the dyadic maximal operator associated with $\cont$.
	\end{lemma}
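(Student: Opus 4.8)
The plan is to run the classical comparison between a ball‑averaged maximal function and a dyadic one: any ball sits inside a controlled union of dyadic cubes of comparable side length, and among those at least one must carry a large Choquet average, the loss being absorbed by the doubling of $\cont$ from Lemma~\ref{content_doubling_lemma}. After replacing $f$ by $|f|$, it suffices to produce a dimensional constant $c\in(0,1)$ for which
\begin{align*}
	\{x\in\mathbb{R}^n:\mathcal{M}_{\cont}f(x)>t\}\ \subset\ \bigcup\{3R:R\in\D(Q),\ R\subseteq E_t\},\qquad E_t:=\{x:M^d_{\cont}f(x)>ct\};
\end{align*}
granting this inclusion, monotonicity of $\cont$ together with Lemma~\ref{eqivalencemaixmalleema} applied to $E=E_t$ gives $\cont(\{\mathcal{M}_{\cont}f>t\})\le 3^n\,\cont(E_t)$, which is the assertion with $C=3^n$.

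To obtain the inclusion, I would fix $x$ with $\mathcal{M}_{\cont}f(x)>t$ and choose $r>0$ with $\int_{B(x,r)}|f|\dvv>t\,\cont(B(x,r))$ (so in particular $\cont(B(x,r))>0$). Let $\ell$ be the dyadic side length with $r\le\ell<2r$, and let $Q'\in\D(Q)$ be the unique cube of side length $\ell$ containing $x$. Since $x\in Q'$ and $r\le\ell$, elementary estimates in the $\ell^\infty$ metric give both $B(x,r)\subset 3Q'$ and the fact that each of the $3^n$ dyadic cubes of side length $\ell$ tiling $3Q'$ lies inside $B(x,4\sqrt n\,r)$; write $3Q'=\bigcup_{i=1}^{3^n}R_i$ for this tiling. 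From $\chi_{3Q'}\le\sum_i\chi_{R_i}$ together with monotonicity and the sublinearity of the Choquet integral (Proposition~\ref{sublinear} and \eqref{sub-lin}),
\begin{align*}
	t\,\cont(B(x,r))<\int_{3Q'}|f|\dvv\le\sum_{i=1}^{3^n}\int_{R_i}|f|\dvv,
\end{align*}
so $\int_{R_{i_0}}|f|\dvv>\tfrac{t}{3^n}\,\cont(B(x,r))$ for some index $i_0$. As $R_{i_0}\subset B(x,4\sqrt n\,r)$, iterating the doubling inequality of Lemma~\ref{content_doubling_lemma} a dimensional number of times yields $\cont(R_{i_0})\le C_n\,\cont(B(x,r))$, whence $\frac{1}{\cont(R_{i_0})}\int_{R_{i_0}}|f|\dvv>\frac{t}{3^n C_n}=:ct$; thus $M^d_{\cont}f>ct$ at every point of $R_{i_0}$, i.e.\ $R_{i_0}\in\D(Q)$ and $R_{i_0}\subseteq E_t$. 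Finally, $R_{i_0}$ and $Q'$ are dyadic cubes of the same side length contained in the common block $3Q'$, hence each is a neighbour of the other, so $x\in Q'\subseteq 3R_{i_0}$ — exactly the membership required by the inclusion above.

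I expect the only genuine bookkeeping to be the trio of $\ell^\infty$‑geometry inclusions ($B(x,r)\subset 3Q'$, $R_i\subset B(x,4\sqrt n\,r)$, and $Q'\subseteq 3R_{i_0}$), all routine once one keeps in mind that $x$ lies somewhere inside $Q'$ rather than at its centre; and the single place where a set is split — passing from $\int_{3Q'}$ to $\sum_i\int_{R_i}$ — must rely on the sublinearity \eqref{sub-lin} of the Choquet integral and never on additivity. All of the packing and translation‑invariance substance is already encapsulated in Lemmas~\ref{content_doubling_lemma} and \ref{eqivalencemaixmalleema}, so there should be no real obstacle: this is a soft, purely dimensional comparison.
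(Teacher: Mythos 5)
Your proposal is correct and takes essentially the same approach as the paper: locate a dyadic cube of side length comparable to $r$ carrying a large $\cont$-average by a pigeonhole argument combined with the doubling from Lemma \ref{content_doubling_lemma}, then conclude via Lemma \ref{eqivalencemaixmalleema}. The only cosmetic differences are that you work directly with the centered maximal function and choose the cube scale $r\le\ell<2r$ with a $3^n$-cube tiling of $3Q'$, whereas the paper passes through the uncentred operator and uses cubes of side $\ell\in[2r,4r)$, at most $2^n$ of which meet the ball; this affects only the constants.
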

	
	\begin{proof}
		Since $\cont$ satisfies the doubling property, we can assume without loss of generality that $\cont(B) > 0$ for any ball $B \subset \mathbb{R}^n$. Consider the uncentered maximal operator associated with $\cont$, defined as
		\begin{align*}
			\tilde{\mathcal{M}}_{\cont}f(x) \coloneqq \sup_{x \in B} \frac{1}{\cont(B)} \int_B |f| \dvv,
		\end{align*}
		where the supremum is taken over all balls $B$ containing $x$. Since $\tilde{\mathcal{M}}_{\cont}f(x) \geq \mathcal{M}_{\cont}f(x)$, it suffices to show that
		\begin{align*}
			\cont\big(\{x \in \mathbb{R}^n : \tilde{\mathcal{M}}_{\cont}f(x) > t\}\big) \leq C \cont\big(\{x \in \mathbb{R}^n : M^d_{\cont}f(x) > ct\}\big).
		\end{align*}
		
		Define the sets
		\begin{align*}
	E_t = \{x \in \mathbb{R}^n : \tilde{\mathcal{M}}_{\cont}f(x) > t\}, \quad \text{ and } \quad L_t = \{x \in \mathbb{R}^n : M^d_{\cont}f(x) > t\}.	    
		\end{align*}
		We claim that for every $x \in E_t$, there exist a universal constant $c>0$ depending on dimension and the doubling constant from Lemma \ref{content_doubling_lemma}, a ball $B^x \subseteq E_t$ containing $x$ and a dyadic cube $Q^x \in \mathcal{D}(Q)$ such that:
		\begin{enumerate}
			\item $Q^x \subseteq L_{ct}$,
			\item $x \in B^x \subseteq 3Q^x$,
		\end{enumerate}
		where $3Q^x$ is the cube with the same center as $Q^x$ but with three times its side length.
		
		To establish this claim, note that for $x \in E_t$, by the definition of $\tilde{\mathcal{M}}_{\cont}$, there exists a ball $B^x$ containing $x$ such that
		\begin{align*}
			\frac{1}{\cont(B^x)} \int_{B^x} |f| \dvv > t.
		\end{align*}
		Let $r_x$ denote the radius of $B^x$. By the properties of the dyadic grid $\mathcal{D}(Q)$, there exist at most $2^n$ disjoint dyadic cubes $\{Q_k^x\}_k$ such that:
		\begin{enumerate}
			\item $\frac{\ell(Q_k^x)}{2} < 2r_x \leq \ell(Q_k^x)$,
			\item $B^x \cap Q_k^x \neq \emptyset $ ,
			\item $B^x \subseteq \bigcup_k Q_k^x $.
		\end{enumerate}
		It follows that
		\begin{align*}
			\sum_k \int_{B^x \cap Q_k^x} |f| \dvv \geq \int_{B^x} |f| \dvv > \cont(B^x)\,t.
		\end{align*}
		Hence, there exists some $Q^x \in \{Q_k^x\}_k$ such that
		\begin{align*}
			\int_{B^x \cap Q^x} |f|  \dvv \geq \frac{1}{2^n} \cont(B^x)t.
		\end{align*}
		Using the doubling property of $\cont$ and properties $(1)$ and $(2)$ of the dyadic cube $Q^{x}$, there exists a dimension dependent constant $C' > 0$ such that
		\begin{align*}
			\frac{\cont(B^x)}{\cont(Q^x)} \geq C',
		\end{align*}
		which implies
		\begin{align*}
			\int_{Q^x} |f| \dvv \geq \frac{C'}{2^n} \cont(Q^x)t.
		\end{align*}
		Thus,
		\begin{align*}
			\frac{1}{\cont(Q^x)} \int_{Q^x} |f| \dvv > ct,
		\end{align*}
		where $c = \frac{C'}{2^n} > 0$. Consequently, $Q^x \subseteq L_{ct}$, and since $\frac{\ell(Q^x)}{2} < 2r_x \leq \ell(Q^x)$ and $B^x\cap Q^x \neq \emptyset$, it follows that $B^x \subseteq 3Q^x$, completing the claim.
		
		Using this construction, we have
		\begin{align*}
			\bigcup_{x \in E_t} B^x \subseteq \bigcup_{x \in E_t} 3Q^x \subseteq \bigcup\{3Q' : Q' \in \mathcal{D}(Q), Q' \subseteq L_{ct}\}.	
		\end{align*}
		
		By monotonicity of $\cont$, we obtain
		\begin{align}\label{com-f}
			\cont(E_t) \leq \cont\big(\bigcup_{x \in E_t} B^x\big) \leq \cont\big(\bigcup\{3Q' : Q' \in \mathcal{D}(Q), Q' \subseteq L_{ct}\}\big).	
		\end{align}
		Lemma \ref{eqivalencemaixmalleema} asserts the bound
		\begin{align*}
			\cont\big(\bigcup\{3Q' : Q' \in \mathcal{D}(Q), Q' \subseteq L_{ct}\}\big) \leq C \cont(L_{ct}),
		\end{align*}
		which in combination with \eqref{com-f} implies
		\begin{align*}
			\cont(E_t) \leq C \cont(L_{ct}).
		\end{align*}
		
		This completes the proof of the Lemma.
	\end{proof}
	
	\begin{proof}[Proof of Theorem \ref{weak-type-cont-maximal}]
		We first prove $(i)$.  By Proposition \ref{deco-cube-int}, $\cont$ satisfies (P).  Theorem \ref{max-new} $(i)$ asserts the estimate 
		\begin{align}\label{dyadic-cont-max-weak}
			\cont \left(\left\{ x\in \rn \, :\, M^d_{\cont} f(x) >t\right\}\right) \leq \frac{C}{t} \int_{\mathbb{R}^n} |f| \dvv,
		\end{align}
		for some constant $C>0$. The inequality \eqref{dyadic-cont-max-weak} and Lemma \ref{equivoftwomaximal} together imply
		\begin{align*}
			\cont \left(\left\{ x\in \rn \, :\, \mathcal{M}_{\cont} f(x) >t\right\}\right) \leq & C \cont \left(\left\{ x\in \rn \, :\, M^d_{\cont} f(x) >ct\right\}\right) \\
			\leq & \frac{C_1}{t} \int_{\mathbb{R}^n} |f|\; \dvv,
		\end{align*}
		which completes the proof of part $(i)$.
		
		We next argue $(ii)$.   The definition of the content maximal function implies that
		\begin{align*}
			\|\mathcal{M}_{\cont} f\|_{L^\infty(\cont)} \leq \|f\|_{L^\infty(\cont)}.
		\end{align*}
		This inequality, along with the inequality established in $(i)$, are sufficient to invoke \cite[Lemma 2.5]{ChenOoiSpector} for the quasi-sublinear operator $T=\mathcal{M}_{\cont}$.  This allows us to conclude that for $1<p<\infty$, there exists a constant $C^{\prime}>0$ such that 
		\begin{align*}
			\int_{\rn} (\mathcal{M}_{\cont} f)^p  \dvv \leq C^{\prime} \int_{\mathbb{R}^n} |f|^p \dvv.
		\end{align*}
		This completes the proof of $(ii)$.
	\end{proof}
	
	We next give the
	\begin{proof}[Proof of Theorem \ref{Lebesgue-diff-cont}]
		Let  $f \in L^1(\mathbb{R}^n; \cont)$.  The claim of the theorem is equivalent to the assertion that
		\begin{align*}
			\cont\left( \left\{ x\in \mathbb{R}^n :  \limsup_{r\to 0^+}\frac{1}{\cont(B(x,r))} \int_{B(x,r)} \left|f - f(x) \right| \dvv >0 \right\}\right)=0.
		\end{align*}
		
		Define
		\begin{align*}
			A_k\vcentcolon= \left\{ x\in \mathbb{R}^n : \ \frac{1}{k} < \limsup_{r\to 0^+}\frac{1}{\cont(B(x,r))} \int_{B(x,r)} \left|f - f(x) \right| \dvv \right\},
		\end{align*}
		and observe that
		\begin{align*}
			\left\{ x\in \mathbb{R}^n :  \limsup_{r\to 0^+}\frac{1}{\cont(B(x,r))} \int_{B(x,r)} \left|f - f(x) \right| \dvv >0 \right\} = \bigcup_{k=1}^\infty A_k.
		\end{align*}
		Therefore, by countable subadditivity of $\cont$, it suffices to show that $\cont(A_k)=0$ for each $k \in \mathbb{N}$.  
		
		The assumption that $f \in L^1(\mathbb{R}^n; \cont)$ allows us to utilize property $(3)$ of Theorem \ref{functionspace} (established in \cite[Proposition 3.2]{PonceSpector}) to find a sequence of functions $\{f_l\}_l \subset C_b(\mathbb{R}^n) \cap L^1(\mathbb{R}^n;\cont)$ such that 
		\begin{align}\label{density}
			\int_{\mathbb{R}^n} |f - f_l| \, \dvv \leq \frac{1}{l},
		\end{align}
		
		For any $r>0$ and $B(x,r)$ such that $\cont(B(x,r)) > 0$, using triangle inequality, we write 
		\begin{align*}
			&  \frac{1}{\cont(B(x,r))} \int_{B(x,r)} \left|f(\cdot) - f(x) \right| \dvv \\&\leq \frac{1}{\cont(B(x,r))} \int_{B(x,r)} \{ |f(\cdot) - f_l(\cdot)| + |f_l(\cdot) - f_l(x)|+ |f_l(x) - f(x)|\} \dvv \\ & \leq \frac{1}{\cont(B(x,r))}\int_{B(x,r)}  |f(\cdot) - f_l(\cdot)| \dvv \\&+ \frac{1}{\cont(B(x,r))}\int_{B(x,r)} |f_l(\cdot) - f_l(x)| \dvv+ |f_l(x) - f(x)|.
		\end{align*}
		Since $f_l$ is continuous for each $l$, we have 
		\begin{align*}
			\limsup_{r \to 0^+}\frac{1}{\cont(B(x,r))} \int_{B(x,r)} |f_l(\cdot) - f_l(x)| \dvv = 0. 
		\end{align*}
		It follows that for every $k  \in \mathbb{N}$, we have the set inclusion 
		\begin{align*}
			A_k &\subset \left \{ x\in \mathbb{R}^n : \  \mathcal{M}_{\cont} (|f - f_l|)(x) > \frac{1}{2k}   \right\} \cup \left\{ x\in \mathbb{R}^n : \  |f_l(x) - f(x)| > \frac{1}{2k}  \right\}\\& : = A^{1,l}_k \cup A^{2,l}_k.
		\end{align*}
		Therefore, monotonicity and subadditivity imply
		\begin{align*}
			\cont(A_k) \leq  \cont(A^{1,l}_k) + \cont(A^{2,l}_k). 
		\end{align*}
		Finally, an application of Theorem \ref{weak-type-cont-maximal} (i) to $A^{1,l}_k$ and Chebychev's inequality to $A^{2,l}_k$, along with \eqref{density}, yield that 
		\begin{align*}
			\limsup_{l \to \infty} \cont(A^{1,l}_k)=0, \quad \text{ and }\quad \limsup_{l \to \infty} \cont(A^{2,l}_k)=0.
		\end{align*}
		This completes the proof. 
	\end{proof}

	\begin{proof}[Proof of Theorem \ref{cz}]
		Consider the collection of dyadic cubes $\{ Q_i \} \subseteq \mathcal{D}(Q)$ that are contained in $Q^\prime$ and satisfy
		\begin{align*}
			\Lambda< \frac{1}{\cont(Q_i)} \int_{Q_i} |f| \dvv.
		\end{align*}
		Let $\{ Q_k \}$ be the maximal subcollection of $\{ Q_i \}$. Then by monotonicity of the outer capacity and the doubling property of $\cont$ established in Lemma \ref{content_doubling_lemma} and Remark \ref{doubling_consequence}, we have that for every $Q_k$, its parent cube $P_k$ satisfies
		\begin{align*}
			\Lambda \geq \frac{1}{\cont(P_k)} \int_{P_k} |f| \dvv \geq \frac{1}{M_0 \cont(Q_k)} \int_{Q_k} |f| \dvv,
		\end{align*}
		for some positive constant $M_{0}$ depending on the dimension and the doubling constant as in Lemma \ref{content_doubling_lemma}.
		This implies
		\begin{align*}
			\frac{1}{\cont(Q_k)} \int_{Q_k} |f| \dvv \leq M_0 \Lambda,
		\end{align*}
		which proves the first claimed property. 
		
		We next prove the second property.  If $ x \in Q' \setminus \bigcup_k Q_k $, then this means every dyadic cube $Q''$ contained in $ Q' $ that contains $ x$ satisfies
		\begin{align*}
			\frac{1}{\cont(Q'')} \int_{Q''} |f| \, \dvv \leq \Lambda.	
		\end{align*}
		As a result, Theorem \ref{Lebesgue-differentiation-new-dyadic} with $\C=\cont$, implies
		\begin{align*}
			|f(x)| \leq \Lambda
		\end{align*}
		for $\cont$-quasi every $ x \in Q' \setminus \bigcup_k Q_k $, which proves the claim.  This completes the proof of Theorem \ref{cz}.
		

	\end{proof}

	\section{A John-Nirenberg Inequality for Translation Invariant Hausdorff Contents}\label{JN-Inequality}
	In this section, we prove a John-Nirenberg inequality for $\cont$ i.e., Theorem \ref{jn_content}.  We begin with an exponential decay lemma established in \cite[Lemma 4.1]{ChenSpector}.
	\begin{lemma}\label{exponentialestimate}
		Suppose that for some $C',c'>1$, $F: (0,\infty) \to [0,\infty)$ satisfies
		\begin{align} \label{exp_decay}
			F(t) \leq C' \frac{F(t-c' s)}{s}
		\end{align}
		for all $c'^{-1} t > s \geq 1$.  If
		\begin{align*}
			F(t) \leq \frac{1}{t}
		\end{align*}
		for every $t>0$, then there exist constants $c,C>0$ such that
		\begin{align*}
			F(t) \leq  C \exp\{ -c t\}
		\end{align*}
		for every $t\geq c'$. Moreover, the constants $C$ and $c$ are explicit and are given by 
		\begin{align*}
			C=\exp\left(\frac{1}{C'e}+1\right), \quad \text{ and } \quad
			c=\frac{1}{C'c' e}.
		\end{align*}  
	\end{lemma}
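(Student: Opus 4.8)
The plan is to turn the recursive bound \eqref{exp_decay} into exponential decay by iterating it a controlled number of times and then closing the estimate with the a priori bound $F(t)\le 1/t$. The preliminary observation is that \eqref{exp_decay} is only useful when the multiplicative factor $C'/s$ is strictly less than $1$, so one must take $s>C'$; with such an $s$ fixed, each application of \eqref{exp_decay} decreases the argument by $c's$ at the cost of a factor $C'/s$, so after $\approx t/(c's)$ iterations one expects a bound of order $(C'/s)^{t/(c's)}=\exp\!\bigl(-\tfrac{t}{c's}\ln(s/C')\bigr)$. The decay rate is therefore governed by $\max_{s>C'}\tfrac{\ln(s/C')}{s}$, an elementary one-variable optimization whose maximizer is $s=eC'$ with value $\tfrac{1}{eC'}$; this already produces the exponent $c=\tfrac1{c'}\cdot\tfrac1{eC'}=\tfrac1{c'eC'}$ of the statement.

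Concretely, I would fix $s=eC'$ (admissible because $C'>1$ forces $s\ge 1$) and put $L:=c's=c'eC'=1/c$, so that \eqref{exp_decay} reads $F(t)\le\tfrac1e F(t-L)$ for all $t>L$. A straightforward induction on $m$ then yields
\begin{equation*}
F(t)\le e^{-m}F(t-mL)\qquad\text{whenever }t>mL,\ m\in\mathbb{N}\cup\{0\},
\end{equation*}
since at the $j$-th step the point $t-(j-1)L$ exceeds $L$ precisely because $t>mL\ge jL$. For $t\ge c'$ I would then split into two regimes. If $c'\le t<2L$, one simply uses $F(t)\le 1/t$ and checks that on this interval the function $t\mapsto e^{ct}/t$ is bounded by $\max\{e^{cc'}/c',\,e^{2}c/2\}$, which for $c',C'>1$ is at most the stated $C=\exp(1+\tfrac1{eC'})$, so $F(t)\le 1/t\le Ce^{-ct}$. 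If $t\ge 2L$, I would take $m:=\lfloor t/L\rfloor-1\ge 1$, so that $L\le t-mL<2L$ and $m\ge t/L-2=ct-2$; feeding this into the iterated inequality and then using $F(t-mL)\le 1/(t-mL)\le 1/L=c$ gives
\begin{equation*}
F(t)\le c\,e^{-m}\le c\,e^{2-ct}=(ce^{2})e^{-ct},
\end{equation*}
and since $ce^{2}=e/(c'C')<e\le C$ this case is finished as well.

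The only substantive steps are the one-variable optimization singling out $s=eC'$ and the bookkeeping needed to check that the rounding of $t/L$ to an integer and the boundary regime $t\in[c',2L)$ are harmless for the claimed constant; the sharp rate $c$ falls out of the calculus step automatically, while the explicit $C$ needs only to be verified to be sufficiently large rather than derived as an optimum. I therefore do not anticipate any genuine obstacle beyond this elementary accounting — the one place where care is required is ensuring the leftover argument $t-mL$ after the iteration is bounded below (which is why one stops the iteration so that $t-mL\in[L,2L)$ rather than running it all the way down, where $F\le 1/(t-mL)$ would be useless).
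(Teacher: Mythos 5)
The paper does not prove this lemma; it is stated with a citation to Lemma~4.1 of the Chen--Spector reference, so there is no in-paper argument to compare against. Your proof is correct and self-contained, and the fact that it produces exactly the stated explicit constants $c=1/(C'c'e)$ and $C=\exp(1+\tfrac{1}{eC'})$ strongly suggests it follows the same route as the cited source: choose the step length $s$ to minimize $C'/s$ times the exponential cost of shifting by $c's$, iterate, and close with the a priori bound $F(t)\le 1/t$. Your computations check out. Fixing $s=eC'$ is admissible (since $C'>1$ gives $s=eC'>1$) and turns \eqref{exp_decay} into $F(t)\le e^{-1}F(t-L)$ with $L=c'eC'=1/c$ for $t>L$; the induction giving $F(t)\le e^{-m}F(t-mL)$ for $t>mL$ is straightforward. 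On the boundary regime $c'\le t<2L$ you use $L>c'$ (from $eC'>1$), the fact that $t\mapsto e^{ct}/t$ has its minimum at $t=L=1/c$, and the endpoint values $e^{1/(eC')}/c'<e^{1+1/(eC')}=C$ and $e^{2}c/2=e/(2c'C')<e<C$. For $t\ge 2L$, taking $m=\lfloor t/L\rfloor-1$ gives $L\le t-mL<2L$, hence $F(t)\le e^{-m}\cdot\tfrac{1}{L}\le ce^{2}e^{-ct}$, and $ce^{2}=e/(c'C')<e<C$. All the constraints from the hypothesis ($s\ge1$, $t-jL>L$ at each iterate) are satisfied. This is a complete and correct argument.
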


Following the program of John and Nirenberg \cite{JohnNirenberg}, and the argument of \cite[Lemma 4.2]{ChenSpector}, we first prove a version of the inequality for cubes with possibly infinite length. 
	\begin{lemma}\label{aux_lemma}
		Let $Q_0\in\Dq$. Then there exist constants $c,C>0$ such that
		\begin{align}\label{jninequality_content}
			& \cont\left(\{x\in Q^\prime  :|f(x)-c_{Q^\prime}|>t\}\right)\\ &\leq \frac{C}{\|f\|_{BMO^{\cont}(Q_0)}}  \left(\int_{Q^\prime}  |f-c_{Q^\prime}|\dvv \right) \exp\left(\frac{- c t}{\|f\|_{BMO^{\cont}(Q_0)}}\right)\notag
		\end{align}
		for all $f \in BMO^{\cont}(Q_0)$, all finite subcubes $Q^\prime\in \mathcal{D}(Q_0)$, and $t\geq c' \|f\|_{BMO^{\cont}(Q_0)}$, where
		\begin{align}
			c_{Q^\prime}= \argmin_{c \in \mathbb{R}} \frac{1}{\cont(Q^\prime)}  \int_{Q^\prime}  |f-c| \dvv \label{c_Q}
		\end{align}
		and $c'=2+2 M_0$, where $M_0$ is the same positive constant appearing in Theorem \ref{cz}.
	\end{lemma}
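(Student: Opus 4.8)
The plan is to run the John--Nirenberg iteration of \cite{JohnNirenberg}, in the form of \cite[Lemma~4.2]{ChenSpector}, using the Calder\'on--Zygmund decomposition of Theorem~\ref{cz} together with the packing lemma (Proposition~\ref{decom-cubes}) and the quasi-additivity it yields (Proposition~\ref{deco-cube-int}) as the substitute for the linearity that is unavailable for Choquet integrals. By positive homogeneity of the Choquet integral with respect to $\cont$, together with $c_{Q'}(\mu f)=\mu\,c_{Q'}(f)$ and $\|\mu f\|_{BMO^{\cont}(Q_0)}=\mu\|f\|_{BMO^{\cont}(Q_0)}$ for $\mu>0$, we may assume $\|f\|_{BMO^{\cont}(Q_0)}=1$; the general case of \eqref{jninequality_content} then follows by rescaling. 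For $t>0$ set
\[
F(t):=\sup\Big\{\tfrac{1}{A(Q')}\,\cont\big(\{x\in Q':|f(x)-c_{Q'}|>t\}\big)\ :\ Q'\in\mathcal{D}(Q_0)\text{ finite},\ A(Q')>0\Big\},\qquad A(Q'):=\int_{Q'}|f-c_{Q'}|\dvv .
\]
If $A(Q')=0$ then $|f-c_{Q'}|=0$ $\cont$-quasi everywhere on $Q'$ and \eqref{jninequality_content} is trivial for that cube; if $A(Q')>0$ then $\cont(Q')>0$ and, since $\tfrac{1}{\cont(Q')}\int_{Q'}|f-c_{Q'}|\dvv$ is a lower bound for $\mathcal{M}^{\#}_{\cont}f$ on $Q'$, we have $\tfrac{1}{\cont(Q')}A(Q')\le1$. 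Chebyshev's inequality gives $F(t)\le1/t$, so by Lemma~\ref{exponentialestimate} it remains to establish a self-improving bound $F(t)\le C'F(t-c's)/s$ valid for $s\ge1$ and $t>c's$.

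To obtain it, fix a finite $Q'$ with $A(Q')>0$ and a parameter $\Lambda\ge2$. Since $\tfrac{1}{\cont(Q')}A(Q')\le1\le\Lambda$, Theorem~\ref{cz} applies to $|f-c_{Q'}|$ on $Q'$ at level $\Lambda$ and produces maximal non-overlapping dyadic $\{Q_k\}\subset Q'$ with $\Lambda<\tfrac{1}{\cont(Q_k)}\int_{Q_k}|f-c_{Q'}|\dvv\le M_0\Lambda$ and $|f-c_{Q'}|\le\Lambda$ $\cont$-q.e.\ off $\bigcup_kQ_k$; hence for $t>\Lambda$ the set $\{x\in Q':|f-c_{Q'}|>t\}$ is, up to a $\cont$-null set, contained in $\bigcup_kQ_k$. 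Apply Proposition~\ref{decom-cubes} with $\lambda=\cont$ to $\{Q_k\}$ to obtain a subfamily $\{Q_{k_i}\}$ with $\sum_{Q_{k_i}\subset P}\cont(Q_{k_i})\le2\cont(P)$ for every $P$, and non-overlapping ancestors $\{\tilde{Q}_m\}$ with $\bigcup_kQ_k\subset\bigcup_iQ_{k_i}\cup\bigcup_m\tilde{Q}_m$ and $\cont(\tilde{Q}_m)\le\sum_{Q_{k_i}\subset\tilde{Q}_m}\cont(Q_{k_i})$. Two facts drive the estimate. First, using $A(Q_{k_i})\le\cont(Q_{k_i})<\tfrac{1}{\Lambda}\int_{Q_{k_i}}|f-c_{Q'}|\dvv$ and Proposition~\ref{deco-cube-int} applied to $\{Q_{k_i}\}$ (with $A_0=2$), together with the ancestor inequality, one gets $\sum_iA(Q_{k_i})<\tfrac{2}{\Lambda}A(Q')$ and $\sum_mA(\tilde{Q}_m)<\tfrac{2}{\Lambda}A(Q')$; moreover $\Lambda\ge2$ forces $\sum_i\cont(Q_{k_i})<\tfrac{2}{\Lambda}\cont(Q')\le\cont(Q')$, which makes $Q'$ ineligible as the cube $Q_m^*$ of \eqref{packing-eq-1} in the construction of Proposition~\ref{decom-cubes}, so that every $\tilde{Q}_m\subsetneq Q'$. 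Second, integrating the pointwise bound $|c_R-c_{Q'}|\le|f-c_R|+|f-c_{Q'}|$ over $R$, using sublinearity \eqref{sub-lin}, the minimality defining $c_R$, and either the Calder\'on--Zygmund upper bound (for $R=Q_{k_i}$) or the fact---forced by maximality of $\{Q_k\}$---that $\tfrac{1}{\cont(R)}\int_R|f-c_{Q'}|\dvv\le\Lambda$ (for $R=\tilde{Q}_m\subsetneq Q'$), one obtains $|c_{Q_{k_i}}-c_{Q'}|\le2M_0\Lambda$ and $|c_{\tilde{Q}_m}-c_{Q'}|\le2\Lambda$. Combining these, for $t>2M_0\Lambda$,
\begin{align*}
\cont\big(\{x\in Q':|f-c_{Q'}|>t\}\big)&\le F(t-2M_0\Lambda)\sum_iA(Q_{k_i})+F(t-2\Lambda)\sum_mA(\tilde{Q}_m)\\
&\le\tfrac{4}{\Lambda}\,A(Q')\,F(t-2M_0\Lambda),
\end{align*}
the last step using that $F$ is non-increasing; dividing by $A(Q')$ and taking the supremum over $Q'$ gives $F(t)\le\tfrac{4}{\Lambda}F(t-2M_0\Lambda)$ for all $\Lambda\ge2$ and $t>2M_0\Lambda$.

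Choosing $\Lambda$ proportional to the free parameter $s$ of Lemma~\ref{exponentialestimate} (for instance so that $2M_0\Lambda=c's$ with $c'=2+2M_0$, the requirement $\Lambda\ge2$ being absorbed into the range $s\ge1$ at the cost of enlarging the multiplicative constant), the displayed bound is exactly the hypothesis $F(t)\le C'F(t-c's)/s$; with $F(t)\le1/t$, Lemma~\ref{exponentialestimate} then yields $F(t)\le C\exp(-ct)$ for $t\ge c'$, that is $\cont(\{x\in Q':|f-c_{Q'}|>t\})\le C\,A(Q')\exp(-ct)$ for every finite $Q'\in\mathcal{D}(Q_0)$ and $t\ge c'$. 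Undoing the normalization $\|f\|_{BMO^{\cont}(Q_0)}=1$ is precisely \eqref{jninequality_content}. The genuinely delicate point is the recursion: because the Choquet integral is not additive, $\sum_kA(Q_k)$ cannot be bounded by $A(Q')$ directly, which forces the passage to the packing subfamily and its ancestors and the verification that no ancestor equals or contains $Q'$ --- the role of $\Lambda\ge2$; the estimates $|c_R-c_{Q'}|\le2M_0\Lambda$ and the accounting of the remaining dimension-free constants are otherwise routine.
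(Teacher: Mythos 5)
Your proof is correct and follows essentially the same route as the paper's: normalize $\|f\|_{BMO^{\cont}(Q_0)}=1$, define $F(t)$ as the extremal ratio, use Chebyshev to get $F(t)\le 1/t$, run a Calder\'on--Zygmund stopping time (Theorem~\ref{cz}) at level $\Lambda$, pass to the packing subfamily and ancestors via Proposition~\ref{decom-cubes}, control the oscillation of the local constants $c_R$ on both types of cubes, use the quasi-additivity of Proposition~\ref{deco-cube-int} (the substitute for linearity that is the crux of the whole argument), and close the loop with Lemma~\ref{exponentialestimate}. The paper does the same thing, normalizing $c_{Q'}=0$ rather than tracking $c_{Q'}$ explicitly, but this is just bookkeeping. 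One point where you are actually a bit more careful than the paper: the paper bounds $\tfrac{1}{\cont(\tilde Q_k)}\int_{\tilde Q_k}|f|\,\dvv \le M_0 s$ for the ancestor cubes without noting that this requires $\tilde Q_k\subseteq Q'$ (which is automatic from the construction of Proposition~\ref{decom-cubes} once one notes that a witness $Q_m^*\supsetneq Q'$ can always be replaced by $Q'$ itself, but the paper does not say this); your device of imposing $\Lambda\ge 2$ to make $Q'$ ineligible as an ancestor, so that $\tilde Q_m\subsetneq Q'$, resolves this cleanly. The cost is that your recursion holds for $\Lambda\ge 2$ rather than $s\ge 1$, which after rescaling yields a constant $c'$ that differs from the paper's $c'=2+2M_0$ by a bounded factor and a range of validity $t\gtrsim M_0$ rather than $t\ge 2+2M_0$; both are immaterial for the application in Theorem~\ref{jn_content}, where small $t$ is handled trivially, but note that your final paragraph's bookkeeping (choosing $2M_0\Lambda=c's$ and claiming $\Lambda\ge 2$ is ``absorbed'' into $s\ge 1$) is asserted rather than verified, and if taken literally only gives $\Lambda\ge\tfrac{1+M_0}{M_0}$, which is $<2$ when $M_0>1$; the correct fix is the rescaling $s\mapsto s/2$ you implicitly have in mind, which alters the constants but not the conclusion.
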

	
	\begin{proof}
		It is easy to see that the inequality (\ref{jninequality_content}) holds when $\Vert f \Vert_{BMO^{\cont}(Q_0)} =0$. Thus, we may assume without loss of generality, $c_{Q^\prime}=0$ and $\|f\|_{BMO^{\cont}(Q_0)}=1$, by replacing $f$ with $\frac{f-c_{Q^\prime}}{\|f\|_{BMO^{\cont}(Q_0)}}$.  Let $F(t)$ be the smallest function for which there holds
		\begin{align}\label{jninequality_content_F}
			\cont\left(\{x\in Q^\prime:|f(x)|>t\}\right) \leq F(t)  \int_{Q^\prime}  |f|\dvv,
		\end{align}
		for all $t>0$, $f \in BMO^{\cont} (Q_0)$ with $\Vert f\Vert_{BMO^{\cont} (Q_0)}=1$ and finite subcubes $Q^\prime\in \D( Q_0)$. 
		
		Note that using Chebychev’s inequality and \eqref{jninequality_content_F}, we have
		\begin{align*}
			F(t) \leq \frac{1}{t},
		\end{align*}
for all $t>0$. We next show that
		\begin{align}\label{Ft-esti}
			F(t) \leq C' \frac{F(t-c' s)}{s}
		\end{align}
		for all $c'^{-1}t > s \geq 1$, where $c'=2+2 M_0$. Once we prove the estimate \eqref{Ft-esti}, the desired inequality is followed by an application of Lemma \ref{exponentialestimate}.  
		
		Since $c_{Q^\prime}=0$ and $\|f\|_{BMO^{\cont}(Q_0)}=1$, we have
		\begin{align} \label{upperbound}
			\frac{1}{\cont( Q^\prime)} \int_{Q^\prime}  |f| \dvv \leq 1.
		\end{align}
		
		If $c'^{-1}t > s \geq 1$, the choice
		$\Lambda = s$ is an admissible height for an application of Theorem \ref{cz} to $|f|$ to obtain a countable collection of non-overlapping dyadic cubes $\{ Q_j\} \subset Q^\prime$  such that 
		\begin{enumerate}
			\item $ s< \frac{1}{\cont(Q_j)} \int_{Q_j} |f| \dvv \leq M_0 s$,
			\item $|f(x)|\leq  s$ for $\cont$ -quasi everywhere $x\in Q^\prime \setminus \cup_j Q_j$.
		\end{enumerate}
		An application of Proposition \ref{decom-cubes} to this family $\{ Q_j\}$ yields a subfamily $\{ Q_{j_k}\}$ and non-overlapping ancestors $\{ \tilde{Q}_k\}$ which satisfy
		\begin{enumerate}
			\item \begin{align*}
				\bigcup_{j} Q_j \subset \bigcup_{k} Q_{j_k} \cup \bigcup_{k} \tilde{Q}_k,
			\end{align*}
			\item
			\begin{align*}
				\sum_{Q_{j_k} \subset P} \cont(Q_{j_k}) \leq 2\cont(P), \text{ for each dyadic cube } P\in \Dq ,
			\end{align*}
			\item
			\begin{align*}
				\cont(\tilde{Q}_k) \leq \sum_{Q_{j_i} \subset \tilde{Q}_k}\cont(Q_{j_i}), \text{ for each } \tilde{Q}_k .
			\end{align*}
		\end{enumerate}
		In what follows, we assume that we avoid redundancy in this covering, i.e., when $Q_{j_k} \subset \tilde{Q}_m$ for some $k,m$, we do not use the cube $Q_{j_k}$.  
		
		Since $t > c' s > s$, we have
		\begin{align*}
			\{x\in Q^\prime:|f(x)|>t\} \subset \bigcup_{j} Q_j \cup N \subset \bigcup_{k, Q_{j_k} \not \subset \tilde{Q}_{m}} Q_{j_k} \cup \bigcup_{k} \tilde{Q}_k \cup N,
		\end{align*}
		where $N$ is a null set with respect to $\cont$.  Thus
		\begin{align*}
			\{x\in Q^\prime:|f(x)|>t\} \subset  \bigcup_{k, Q_{j_k} \not\subset \tilde{Q}_{m}}  &\{x\in Q_{j_k}:|f(x)|>t\} \\&\cup \bigcup_{k}  \{x\in \tilde{Q}_k:|f(x)|>t\} \cup N. 
		\end{align*}
		Therefore, by using countable subadditivity of the dyadic content $\cont$, we have
		\begin{align}\label{subadditiveconsequence}
			\nonumber
			\cont\left(\{x\in Q^\prime:|f(x)|>t\}\right) &\leq \sum_{k, Q_{j_k} \not \subset \tilde{Q}_{m}} \cont\left(\{x\in Q_{j_k}:|f(x)|>t\}\right) \\
			&\;\;+ \sum_k \cont\left(\{x\in \tilde{Q}_k :|f(x)|>t\}\right).
		\end{align}
We next show that if $R=Q_{j_k},\tilde{Q}_k$, then
		\begin{align*}
			|c_{R}| \leq c' s,
		\end{align*}
		where $c_{R}$ is defined in \eqref{c_Q}.
		
		It is easy to see that $0 = c_{R} \leq c' s$ when $\cont(R) = 0$. When $\cont(R) > 0$, we have by the definition of Choquet integral and using $s\geq 1$,
		\begin{align*}
			|c_{R}| &= \frac{1}{\cont(R)} \int_{R} |c_{R}| \dvv \\
			&\leq 2 \left( \frac{1}{\cont(R)} \int_{R} |f-c_{R}|\dvv + \frac{1}{\cont(R)} \int_{R} |f|\dvv \right) \\
			&\leq 2 + 2 M_0  s \\
			&\leq (2+2 M_0) s\\
			&= c' s.
		\end{align*}
		The above estimate implies
		\begin{align}\label{esti-u}
			|f| \leq |f-c_{R}| + |c_{R}| 
			\leq  |f-c_{R}| + c' s.
		\end{align} 
		Therefore, using estimates \eqref{subadditiveconsequence} and \eqref{esti-u}, we write
		\begin{align*}
			\cont\left(\{x\in Q^\prime:|f(x)|>t\}\right) &\leq \sum_{k, Q_{j_k} \not \subset \tilde{Q}_{m}} \cont\left(\{x\in Q_{j_k}:|f(x)-c_{Q_{j_k}}|>t-c' s\}\right) \\
			&\;\;+ \sum_k \cont\left(\{x\in \tilde{Q}_{k}:|f(x)-c_{\tilde{Q}_{k}}|>t-c' s\}\right).
		\end{align*}
The fact $\|f-c_{Q'}\|_{BMO^{\cont}(Q_0)} =  \|f\|_{BMO^{\cont}(Q_0)} =1$ and the definition of $F$ gives
		\begin{align}\label{JN-prelim}
			& \cont\left(\{x\in Q^\prime:|f(x)|>t\}\right)\\
			\nonumber &\leq \sum_{k, Q_{j_k} \not \subset \tilde{Q}_{m}} F(t-c' s) \int_{Q_{j_k}} |f-c_{Q_{j_k}}|\dvv
			\;\;+ \sum_k F(t-c' s) \int_{\tilde{Q}_{k}} |f-c_{\tilde{Q}_{k}}|\dvv\\
			\nonumber &\leq \sum_{k, Q_{j_k} \not \subset \tilde{Q}_{m}}  F(t-c' s) \cont(Q_{j_k}) + \sum_k  F(t-c' s) \cont(\tilde{Q}_{k})\\
			\nonumber	&\leq  F(t-c' s) \sum_k  \cont(Q_{j_k}),
		\end{align}
		where in the last line we use the fact that there is no redundancy in cubes in the collections.  As the subcollection of cubes $\{Q_{j_k}\}$ comes from the Calder\'on-Zygmund decomposition, each $Q_{j_k}$ satisfies
		\begin{align}\label{CZ-esti}
			\cont(Q_{j_k}) < \frac{1}{ s} \int_{Q_{j_k}} |f|\dvv,
		\end{align}
		and $\cont(Q_{j_k}) > 0 $ for each of $Q_{j_k}$. 
		
		Using \eqref{JN-prelim}, \eqref{CZ-esti} and \eqref{pack-int-hyp-2}, we deduce
		\begin{align*}
			\cont\left(\{x\in Q^\prime:|f(x)|>t\}\right) &\leq F(t-c' s) \sum_k  \frac{1}{ s} \int_{Q_{j_k}} |f| \dvv \\
			&\leq F(t-c' s) \frac{C'}{ s} \int_{\bigcup_k Q_{j_k}} |f|\dvv\\
			&\leq F(t-c' s) \frac{C'}{s} \int_{Q^\prime} |f| \dvv.
		\end{align*}
		
		When $\int_{Q^\prime} |f| \dvv =0$, by Chebychev’s inequality, \eqref{jninequality_content} follows immediately. Hence, we may assume  $\int_{Q^\prime} |f| \dv >0$. Moreover, as $F(t)$ is the smallest function satisfying \eqref{jninequality_content_F}, so we have
		\begin{align*}
			\cont\left(\{x\in Q^\prime:|f(x)|>t\}\right) > \frac{F(t)}{2}  \int_{Q^\prime}  |f|\dvv,
		\end{align*}
		and thus
		\begin{align*}
			\frac{F(t)}{2} < \frac{\cont\left(\{x\in Q^\prime:|f(x)|>t\}\right)}{\int_{Q^\prime} |f|\dvv} \leq C' \frac{F(t- c' s)}{s}.
		\end{align*}
		This completes the proof of \eqref{Ft-esti} and therefore the lemma.
	\end{proof}
	
	Now, we present the main result of this section so called John-Nirenberg inequality for the dyadic content $\cont$.
	
	\begin{proof}[Proof of Theorem \ref{jn_content}]
		The Chebychev's inequality settles the case $\Vert f \Vert_{BMO^{\cont}(Q_0)} =0$. Without loss of generality, we may assume $\Vert f \Vert_{BMO^{\cont}(Q_0)} >0$.
		From the proof of Lemma \ref{jninequality_content}, we have
		\begin{align*}
			&\cont\left(\{x\in Q^\prime:|f(x)-c_{Q^{\prime}}|>t\}\right) \\ &\leq C \exp\left(\frac{-ct}{\|f\|_{BMO^{\cont}(Q_0)}}\right)  \frac{1}{\|f\|_{BMO^{\cont}(Q_0)}} \int_{Q\prime}  |f-c_{Q\prime}| \dvv \\
			&\leq C  \exp\left(\frac{-ct}{\|f\|_{BMO^{\cont}(Q_0)}}\right)  \cont(Q^\prime)
		\end{align*}
		for all $t \geq c' \|f\|_{BMO^{\cont}(Q_0)}$.  But if $t \in (0,c' \|f\|_{BMO^{\cont}(Q_0)})$, we have
		\begin{align*}
			&\cont\left(\{x\in Q^\prime:|f(x)-c_{Q^\prime}|>t\}\right)\\ &\leq  \cont(Q^\prime) \\
			&\leq \cont(Q^\prime)\exp\left(\frac{-t}{c' \|f\|_{BMO^{\cont}(Q_0)}}+1\right) \\
			&= \cont(Q^\prime)\exp(1) \exp\left(\frac{-t}{c' \|f\|_{BMO^{\cont}(Q_0)}}\right) \\
			&\leq  \cont(Q^\prime)\exp\left(\frac{1}{2C'e}+1\right) \exp\left(\frac{-t}{2C'c' e\|f\|_{BMO^{\cont}(Q_0)}}\right)\\
			&= C \cont(Q^\prime) \exp\left(\frac{-ct}{\|f\|_{BMO^{\cont}(Q_0)}}\right),
		\end{align*}
		where 
		\begin{align*}
			C=\exp\left(\frac{1}{2C'e}+1\right) \quad \text{ and } \quad 
			c=\frac{1}{2C'c' e}
		\end{align*}
		are the same constants as derived in Lemma \ref{exponentialestimate}. This completes the proof of Theorem~\ref{jn_content}.
	\end{proof}

	
	\section{Extension of the Theory to Outer Capacities}\label{capacities}
	The main result established in this section is Theorem \ref{com-cap}.  We also show how Theorem \ref{com-cap} and the preceding analysis easily implies Theorem \ref{maxi-esti-new}, \ref{Lebesgue-differentiation-new}, \ref{cz_C}, and \ref{jn_content_C}.
	
	%

	%

	We require the following result that a capacity $\C$ and its induced dyadic content $\contC$ coincide on dyadic cubes.
	\begin{proposition}\label{equality}
		Suppose $\C$ is an outer capacity.	Then for all $Q'\in\Dq$, we have $\C(Q')=\contC(Q')$.
	\end{proposition}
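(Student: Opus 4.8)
The plan is to establish the two inequalities $\contC(Q')\le \C(Q')$ and $\C(Q')\le \contC(Q')$ separately; both follow directly from the defining properties of an outer capacity, and in fact only monotonicity and countable subadditivity are needed.

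For the first inequality, I would simply note that since $Q'\in\Dq$, the one-element family $\{Q'\}$ is itself an admissible cover of $Q'$ among those appearing in the infimum defining $\contC$ in \eqref{capacity_dyadiccontent}. Taking this particular cover gives $\contC(Q')\le \C(Q')$ immediately.

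For the reverse inequality, I would take an arbitrary admissible cover $Q'\subset\bigcup_i Q_i$ with $Q_i\in\Dq$. Monotonicity of $\C$ yields $\C(Q')\le \C(\bigcup_i Q_i)$, and countable subadditivity yields $\C(\bigcup_i Q_i)\le \sum_i \C(Q_i)$. Passing to the infimum over all such covers then gives $\C(Q')\le \contC(Q')$, and combining with the first inequality completes the proof.

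There is no genuine obstacle here: the statement is essentially a reformulation of the axioms $(ii)$ and $(iii)$, and I would remark in passing that the outer regularity hypothesis $(iv)$ is not used, so the identity in fact holds for any outer measure.
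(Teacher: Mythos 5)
Your proposal is correct and follows essentially the same argument as the paper: the single-cube cover gives one inequality, and monotonicity plus countable subadditivity applied to an arbitrary dyadic cover, followed by taking the infimum, gives the other. Your closing remark that outer regularity is unused also matches the remark the paper makes immediately after the proposition.
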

	\begin{remark}
		In fact, this holds for all outer measures, without assuming the outer regularity condition (iv).
	\end{remark}
	\begin{proof}
		It is easy to notice that $Q'$ is a cover of itself. Therefore, we have
		\begin{align*}
			\contC(Q')\leq \C(Q').
		\end{align*}
		Let $\{ Q_i\}_i \subseteq \mathcal{D}(Q)$ be a collection of dyadic cubes such that $Q'\subseteq \cup_{i} Q_i$. Then using the monotonicity and countable subadditivity of $\C$ we can write
		\begin{align*}
			\C(Q')\leq \C\left(\cup_{i} Q_i\right)\leq \sum_{i}\C(Q_i).
		\end{align*} 
		Now, taking the infimum over all such cover we deduce
		\begin{align*}
			\C(Q')\leq \contC(Q'),
		\end{align*}
		which gives the other side implications.
	\end{proof}

	We have the necessary preliminaries in place to complete the proof of Theorem~\ref{com-cap}.
	\begin{proof}[Proof of Theorem \ref{com-cap}]
		Suppose, $\cup_{i} Q_i$, for $Q_i\in\Dq$ is a cover of $E$. Then by properties of the capacity $\C$, we have
		\begin{align*}
			\C(E)\leq \sum_{i}\C(Q_i).
		\end{align*}
		Taking the infimum of all such covers we obtain
		\begin{align*}
			\C(E) \leq \contC(E).
		\end{align*}
		We next prove the corresponding lower bound.  To this end, let $U$ be any open set containing $E$.  By Whitney's decomposition theorem for open sets, there exist non-overlapping cubes $Q_j\in \Dq$, such that $U=\bigcup_{j}Q_j$, see e.g.~ \cite[J.1 on p.~09]{Grafakos1class} (note that while the claim states that the cubes are closed, in the construction, they are half-open and the conclusion is valid with either choice).
		Thus we have 
		\begin{align*}
			E\subset U= \bigcup_{j}Q_j.
		\end{align*}
		Now, by Proposition \ref{decom-cubes} for the family $\{Q_j\}_j$, there exists a subfamily $\left\{Q_{j_{k}}\right\}_k$ and a family of nonoverlapping ancestors $\{\tilde{Q}_{k}\}_k$ such that there holds
		\begin{align}\label{com-pack-1}
			\bigcup_{j} Q_{j} \subset \bigcup_{k} Q_{j_{k}} \cup \bigcup_{k} \tilde{Q}_{k},
		\end{align}
		\begin{align}\label{com-pack-2}
			\sum_{Q_{j_{k}} \subset Q^\prime} \contC(Q_{j_{k}}) \leq 2 \contC(Q^\prime), \text { for each } Q^\prime\in\Dq,
		\end{align}
		and
		\begin{align}\label{com-pack-3}
			\contC(\tilde{Q}_{k}) \leq \sum_{Q_{j_{i}} \subset \tilde{Q}_{k}} \contC(Q_{j_{i}}).
		\end{align}
		Now by applying Proposition~\ref{equality} on each dyadic cubes, \eqref{com-pack-2} turns out that 
        \begin{align*}
			\sum_{Q_{j_{k}} \subset Q^\prime} \C(Q_{j_{k}}) \leq 2 \C(Q^\prime), \text { for each } Q^\prime\in\Dq.
		\end{align*}
        Hence using the above and by the Packing Assumption (P), for the subfamily $\left\{Q_{j_{k}}\right\}_k$, and $f=\boldsymbol{1}_{\cup_{k} Q_{j_{k}}}$, we have
		\begin{equation}\label{com-pack-4}
			\sum_{k}  \C(Q_{j_{k}}) \leq 2\, \C(\cup_{k} Q_{j_k}).
		\end{equation}
		In the coming part in many places, by Proposition \ref{equality}, we will use the fact that for any $Q^\prime\in \Dq$, there holds $\C(Q^\prime)=\cont(Q^\prime)$. Now, from monotonicity and countable subadditivity property of $\C$, we have
		\begin{align*}
			\C(U)&\geq \C\left(\cup_{k} Q_{j_{k}}\right) \\&\overset{\eqref{com-pack-4}}{\geq} \frac{1}{2} \sum_{k}  \C(Q_{j_{k}})\\&= \frac{1}{2} \sum_{k}  \contC(Q_{j_{k}})\\&= \frac{1}{4} \sum_{k}  \contC(Q_{j_{k}})+\frac{1}{4} \sum_{k}  \contC(Q_{j_{k}}) \\& \overset{\eqref{com-pack-3}}{\geq } \frac{1}{4} \sum_{k}  \contC(Q_{j_{k}})+\frac{1}{4} \sum_{k}  \contC(\tilde{Q}_{k})\\&\geq \frac{1}{4}\contC\left(\bigcup_{k} Q_{j_{k}} \cup \bigcup_{k} \tilde{Q}_{k}\right)\\&\overset{\eqref{com-pack-1}}{\geq}\frac{1}{4} \contC\left(\bigcup_{j} Q_{j}\right)\\&\geq \frac{1}{4} \contC(E).
		\end{align*} 
		Now, taking infimum over such open sets and using outer regularity of $\C$, we get
		\begin{align*}
			\frac{1}{4}\contC(E) \leq \C(E).
		\end{align*}
		This completes the proof of the desired lower bound and therefore Theorem \ref{com-cap}.
	\end{proof}
	
	\subsection{Proofs of Theorems \ref{maxi-esti-new}, \ref{Lebesgue-differentiation-new}, \ref{cz_C} and \ref{jn_content_C}}
	Because of Theorem \ref{com-cap}, the arguments of the proofs of Theorems \ref{maxi-esti-new}, \ref{Lebesgue-differentiation-new}, \ref{cz_C} and \ref{jn_content_C} follow along the lines of Theorems \ref{weak-type-cont-maximal}, \ref{Lebesgue-diff-cont}, \ref{cz} and \ref{jn_content}.  However, there are some differences in the proofs based on the fact that, unlike $\cont$, $\contC$ is not translation invariant.  In particular, we first establish analogues of Lemmas \ref{content_doubling_lemma} and \ref{eqivalencemaixmalleema} for $\contC$.

	\begin{lemma}\label{doubling-contC}
		Suppose $\C$ is an outer capacity which satisfies (P) and \eqref{global_doubling}.  Then $\contC$ is doubling.
	\end{lemma}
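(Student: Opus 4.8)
The plan is to read off the doubling property of $\contC$ from the comparison furnished by Theorem \ref{com-cap} together with the assumed global doubling \eqref{global_doubling} of $\C$ itself. Since $\C$ satisfies (P), Theorem \ref{com-cap} gives
$$\tfrac14\,\contC(E) \le \C(E) \le \contC(E)\qquad\text{for every } E\subset\rn,$$
so $\contC$ and $\C$ are equivalent as set functions with comparison constant $4$. Applying this two-sided bound to the balls $B(x,2r)$ and $B(x,r)$ and invoking \eqref{global_doubling}, I would estimate
$$\contC(B(x,2r)) \le 4\,\C(B(x,2r)) \le 4D_0\,\C(B(x,r)) \le 4D_0\,\contC(B(x,r)),$$
which shows $\contC$ is doubling with constant $4D_0$.

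There is no genuine obstacle in this argument; the only point worth flagging is that Theorem \ref{com-cap} is exactly the device which converts the hypothesis on $\C$ into a statement about the nonlinear content it induces. One cannot proceed as in the proof of Lemma \ref{content_doubling_lemma}, covering $B(x,2r)$ by translates of a fixed dyadic cube contained in $B(x,r)$, because $\contC$ is not translation invariant. If one wished to avoid appealing to Theorem \ref{com-cap}, a self-contained alternative would be to cover $B(x,2r)$ by a bounded, dimension-dependent number of dyadic cubes each of side length comparable to that of a dyadic cube $Q'\subset B(x,r)$, use finite subadditivity of $\contC$ and Proposition \ref{equality} to transfer the estimate to $\C$, and then dilate via \eqref{global_doubling} (as in Remark \ref{doubling_consequence}) to compare $\C(Q')$ with $\C$ of the covering cubes; but this is strictly more work than the one-line deduction above, so I would present the latter.
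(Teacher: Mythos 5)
Your argument is exactly the paper's: both deductions apply Theorem \ref{com-cap} to get $\contC(B(x,2r)) \le 4\,\C(B(x,2r))$, then use \eqref{global_doubling} for $\C$, and finally $\C \le \contC$ to return to the content. The observation that translation invariance fails for $\contC$ (so Lemma \ref{content_doubling_lemma} cannot be mimicked directly) is also the reason the paper introduces this separate lemma, so your commentary is apt but not a new route.
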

	
	\begin{proof}
		Let $B(x,r) \subset \mathbb{R}^n$.  The assumptions of the lemma are sufficient to apply Theorem \ref{com-cap}, which in combination with the doubling assumption on $\C$ gives
		\begin{align*}
			\contC(B(x,2r)) \leq 4 \C(B(x,2r)) \leq 4 D \C(B(x,r)) \leq 4 D \contC(B(x,r)),
		\end{align*}
		which is the desired conclusion.
	\end{proof}
	
	\begin{lemma}\label{content_3cube_estimate}
		Suppose $\C$ is an outer capacity which satisfies (P) and \eqref{global_doubling}.  Then there exists a constant $C$ such that 
		\begin{align*}
			\contC ( \bigcup \{ 3Q': Q' \in \mathcal{D} (Q) \text{ and } Q' \subseteq E\} ) \leq C \contC(E)
		\end{align*}
		for every $E \subset \mathbb{R}^n$.
	\end{lemma}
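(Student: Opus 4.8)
The plan is to run the argument of Lemma~\ref{eqivalencemaixmalleema}, the three--cube estimate for $\cont$, but with the translation invariance of $\cont$ (which $\contC$ does not enjoy) replaced by the equivalence $\C\sim\contC$ of Theorem~\ref{com-cap} together with the doubling of $\contC$ from Lemma~\ref{doubling-contC}. Write $G:=\bigcup\{3Q':Q'\in\mathcal{D}(Q),\ Q'\subseteq E\}$; we may assume $G\neq\emptyset$ and that all cubes occurring below are finite, the remaining cases being routine. Let $\{P_j\}_j$ be the family of \emph{maximal} dyadic cubes of $\mathcal{D}(Q)$ contained in $E$: these are pairwise non-overlapping, $\bigcup_jP_j\subseteq E$, and every $Q'\in\mathcal{D}(Q)$ with $Q'\subseteq E$ lies inside some $P_j$. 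A short estimate in the sup--norm (the center of $Q'$ lies within $\tfrac12(\ell(P_j)-\ell(Q'))$ of the center of $P_j$) shows that nested dyadic cubes $Q'\subseteq P_j$ force $3Q'\subseteq 3P_j$, so that $G\subseteq\bigcup_j 3P_j$.

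The first ingredient is a local estimate: there is a constant $C_0=C_0(n,D_0)$ with $\contC(3R)\le C_0\,\C(R)$ for every $R\in\mathcal{D}(Q)$. Indeed, $3R$ is contained in the ball of radius $\tfrac{3\sqrt n}{2}\ell(R)$ about the center of $R$, which in turn contains the inscribed ball of $R$ of radius $\tfrac12\ell(R)$; iterating the doubling inequality of Lemma~\ref{doubling-contC} a dimension--dependent number of times and then using $\contC(R)=\C(R)$ from Proposition~\ref{equality} gives the claim. (Equivalently, cover $3R$ by its $3^n$ dyadic neighbours of the same generation and compare each of them to $R$ by iterating \eqref{global_doubling} on inscribed and circumscribed balls.)

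The second ingredient is a thinning of $\{P_j\}$ via the Melnikov packing lemma, Proposition~\ref{decom-cubes}, applied with the monotone set function $\lambda=\C|_{\mathcal{D}(Q)}$ (equal to $\contC|_{\mathcal{D}(Q)}$ by Proposition~\ref{equality}): it produces a subfamily $\{P_{j_k}\}_k$ and non-overlapping ancestors $\{\tilde P_k\}_k$ with $\bigcup_jP_j\subseteq\bigcup_kP_{j_k}\cup\bigcup_k\tilde P_k$, with $\sum_{P_{j_k}\subseteq Q'}\C(P_{j_k})\le 2\,\C(Q')$ for each $Q'\in\mathcal{D}(Q)$, and with $\C(\tilde P_k)\le\sum_{P_{j_i}\subseteq\tilde P_k}\C(P_{j_i})$ for each $k$. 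From the construction in that proof each $P_j$ is either one of the $P_{j_k}$ or is contained in one of the $\tilde P_k$, so the inclusion above upgrades, after dilation (using $Q'\subseteq P\Rightarrow 3Q'\subseteq 3P$), to $\bigcup_j 3P_j\subseteq\bigcup_k 3P_{j_k}\cup\bigcup_k 3\tilde P_k$. Combining this with countable subadditivity and the local estimate yields
\begin{align*}
\contC(G)\le\sum_k\contC(3P_{j_k})+\sum_k\contC(3\tilde P_k)\le C_0\Big(\sum_k\C(P_{j_k})+\sum_k\C(\tilde P_k)\Big).
\end{align*}
For the first sum I would invoke the Packing Assumption (P) for $\C$ with $f=\mathbf{1}_{\cup_kP_{j_k}}$ --- admissible precisely because of the packing bound $\sum_{P_{j_k}\subseteq Q'}\C(P_{j_k})\le 2\,\C(Q')$ just obtained --- to get $\sum_k\C(P_{j_k})\le 2\,\C(\bigcup_kP_{j_k})\le 2\,\C(E)$, exactly as in the proof of Theorem~\ref{com-cap}; for the second sum, the ancestor property together with the non-overlap of $\{\tilde P_k\}$ gives $\sum_k\C(\tilde P_k)\le\sum_k\sum_{P_{j_i}\subseteq\tilde P_k}\C(P_{j_i})\le\sum_i\C(P_{j_i})\le 2\,\C(E)$. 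Hence $\contC(G)\le 4C_0\,\C(E)\le 4C_0\,\contC(E)$ by Theorem~\ref{com-cap}, which is the assertion with $C=4C_0$.

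The main obstacle is the passage from $E$ to the much larger set $G$. One cannot simply take a near--optimal dyadic cover $\{R_m\}$ of $E$ and dilate: the cubes $3R_m$ overlap heavily, and, more fundamentally, $\sum_m\C(R_m)$ over a cover of $E$ is not controlled by $\contC(E)$ in general --- already for $\C=\mathcal{H}^\beta_\infty$ with $\beta<n$, subdividing a cube makes such a sum blow up. The packing--lemma thinning to $\{P_{j_k}\}$, whose total $\C$--mass is tamed by (P) via Theorem~\ref{com-cap}, together with the ancestor structure $\{\tilde P_k\}$ that absorbs the discarded cubes, is exactly what repairs this; this is the same mechanism that drives the proof of Theorem~\ref{com-cap}. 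A secondary, purely bookkeeping point is the treatment of the infinite cubes of $\mathcal{D}(Q)$, for which $3R$ is not literally defined, and the application of (P) across the finitely many quadrants of $\mathcal{D}(Q)$, both dispatched exactly as there.
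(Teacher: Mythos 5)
Your argument is correct (modulo the quadrant bookkeeping you flag at the end), but it takes a markedly longer route than the paper's, and the obstacle you cite to justify the detour is not a real obstacle. If $\{R_m\}\subset\mathcal D(Q)$ is a \emph{near-optimal} cover of $E$, i.e.\ $\sum_m\C(R_m)\le\contC(E)+\epsilon$, then that sum \emph{is} controlled by $\contC(E)$ --- that is the very definition of $\contC$. Your worry that subdividing a cube blows up $\sum_m\C(R_m)$ applies to arbitrary covers, not near-optimal ones, and the overlap among the $3R_m$ is harmless for an upper bound because one only needs countable subadditivity. The paper's proof is precisely the route you dismissed: take a near-optimal cover, pass (by monotonicity and subadditivity, without increasing the sum) to the maximal dyadic cubes $\{Q_i\}$ of its union, observe that maximality forces every dyadic $Q'\subseteq E$ into a single $Q_i$ so that $\bigcup\{3Q':Q'\in\mathcal D(Q),\ Q'\subseteq E\}\subseteq\bigcup_i 3Q_i$, and then conclude
\[
\contC\Big(\bigcup_i 3Q_i\Big)\le\sum_i\contC(3Q_i)\le A\sum_i\contC(Q_i)=A\sum_i\C(Q_i)\le A\big(\contC(E)+\epsilon\big),
\]
using only the doubling of $\contC$ from Lemma~\ref{doubling-contC} and the identity $\contC=\C$ on $\mathcal D(Q)$ from Proposition~\ref{equality}, with no fresh appeal to (P) or to Proposition~\ref{decom-cubes}. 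Your route through the maximal dyadic cubes \emph{inside} $E$ (whose total $\C$-mass is indeed not controlled by $\contC(E)$) followed by the Melnikov thinning and (P) essentially re-runs the mechanism of Theorem~\ref{com-cap} inline; it is valid, but the extra machinery is unnecessary and the shorter argument you ruled out is the intended one.
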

	
	\begin{proof}
		Fix $\epsilon>0$.   By the definition of $\contC$, we may find a family of dyadic cubes $\{Q_i\}_{i}$ such that
		\begin{align*}
			E \subset \bigcup_i Q_i, \quad \text{ and }\quad 			\sum_{i=1}^\infty \C(Q_i) &\leq \contC(E) +\epsilon.
		\end{align*}
		By monotonicity and countable subadditivity of $\C$, we may assume that $\{Q_i\}_{i}$ is maximally disjoint.  Therefore, we have
		\begin{align}\label{inclusion}
			\bigcup \{ 3Q': Q' \in \mathcal{D} (Q) \text{ and } Q' \subseteq E\} \subset \bigcup_{i} 3Q_i.
		\end{align}
		Hence, the above estimate \eqref{inclusion} together with monotonicity, countable subadditivity, doubling property (Lemma \ref{doubling-contC}) of $\contC$ and Proposition \ref{equality}  imply
		\begin{align*}
			\contC(\bigcup \{ 3Q': Q' &\in \mathcal{D} (Q) \text{ and } Q' \subseteq E\}) \leq \contC(\bigcup_{i} 3Q_i)\\
			&\leq \sum_{i=1}^\infty \contC(3Q_i)\\
			&\leq A\sum_{i=1}^\infty \contC(Q_i)\\
			&= A\sum_{i=1}^\infty  \C(Q_i)\\
			&\leq A\contC(E) +A\epsilon.
		\end{align*}
		The claim follows by sending $\epsilon \to 0$. 
	\end{proof}
	
	\begin{lemma}\label{equivoftwomaximal-1}
		Suppose $\C$ is an outer capacity which satisfies (P) and \eqref{global_doubling}. Then there exist constants $C > 0$ and $c > 0$ such that 
		\begin{align*}
			\contC\big(\{x \in \mathbb{R}^n : \mathcal{M}_{\contC}f(x) > t\}\big) \leq C \contC\big(\{x \in \mathbb{R}^n : M^d_{\contC}f(x) > ct\}\big),
		\end{align*}
		for every function $f: \mathbb{R}^n \to \mathbb{R}$, where $M^d_{\contC}$ denotes the dyadic maximal operator associated with $\contC$.
	\end{lemma}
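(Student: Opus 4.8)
The plan is to follow the proof of Lemma \ref{equivoftwomaximal} essentially verbatim, with $\cont$ replaced throughout by $\contC$, and with the two structural inputs now being Lemma \ref{doubling-contC} (doubling of $\contC$) in place of the doubling part of Lemma \ref{content_doubling_lemma}, and Lemma \ref{content_3cube_estimate} in place of Lemma \ref{eqivalencemaixmalleema}. Since $\contC$ is doubling, it is harmless to assume $\contC(B)>0$ for every ball $B\subset\rn$. Introducing the uncentered maximal operator $\tilde{\mathcal{M}}_{\contC}f(x):=\sup_{x\in B}\frac{1}{\contC(B)}\int_B|f|\dcontC$, which dominates $\mathcal{M}_{\contC}f$ pointwise, monotonicity of $\contC$ reduces the claim to bounding $\contC(E_t)$, where $E_t:=\{x\in\rn:\tilde{\mathcal{M}}_{\contC}f(x)>t\}$, by $C\,\contC(L_{ct})$ with $L_{ct}:=\{x\in\rn:M^d_{\contC}f(x)>ct\}$.

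Next I would carry out the same geometric selection. For $x\in E_t$, pick a ball $B^x\ni x$ of radius $r_x$ with $\frac{1}{\contC(B^x)}\int_{B^x}|f|\dcontC>t$, cover $B^x$ by at most $2^n$ disjoint dyadic cubes $Q^x_k\in\mathcal{D}(Q)$ with $\ell(Q^x_k)/2<2r_x\le\ell(Q^x_k)$ meeting $B^x$, and use the pigeonhole argument to select one, $Q^x$, with $\int_{B^x\cap Q^x}|f|\dcontC\ge 2^{-n}\contC(B^x)t$. Because $\ell(Q^x)\le 4r_x$ and $Q^x\cap B^x\neq\emptyset$, the cube $Q^x$ lies in a fixed dilate $B(x,(1+4\sqrt n)r_x)$ of $B^x$, so iterating the doubling property of Lemma \ref{doubling-contC} a dimensional number of times yields a dimensional constant $C'>0$ with $\contC(B^x)\ge C'\contC(Q^x)$. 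Combining this with $\int_{B^x\cap Q^x}|f|\dcontC\le\int_{Q^x}|f|\dcontC$ gives $\frac{1}{\contC(Q^x)}\int_{Q^x}|f|\dcontC>ct$ with $c=C'/2^n$, i.e.\ $Q^x\subseteq L_{ct}$; and the size and intersection constraints force $B^x\subseteq 3Q^x$ (the concentric cube of three times the side length).

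Finally, from $\bigcup_{x\in E_t}B^x\subseteq\bigcup\{3Q':Q'\in\mathcal{D}(Q),\,Q'\subseteq L_{ct}\}$, monotonicity of $\contC$ together with Lemma \ref{content_3cube_estimate} gives $\contC(E_t)\le\contC\big(\bigcup\{3Q':Q'\in\mathcal{D}(Q),\,Q'\subseteq L_{ct}\}\big)\le C\,\contC(L_{ct})$, which is the assertion of the lemma. I do not expect a genuine obstacle: the only deviation from the translation invariant case is that the comparison $\contC(B^x)\gtrsim\contC(Q^x)$ must be extracted from Lemma \ref{doubling-contC} (rather than from translation invariance and a counting argument), and the absorption of the dilation factor $3$ must go through Lemma \ref{content_3cube_estimate}, whose proof already combines a Whitney-type disjointification with the doubling of $\contC$; since both lemmas are in hand, the argument closes exactly as before.
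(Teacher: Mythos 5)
Your proposal matches the paper's proof exactly: the paper simply notes that the argument of Lemma \ref{equivoftwomaximal} carries over with Lemma \ref{doubling-contC} replacing the doubling part of Lemma \ref{content_doubling_lemma} and Lemma \ref{content_3cube_estimate} replacing Lemma \ref{eqivalencemaixmalleema}, and omits the details. You have supplied those details correctly, including the observation that the comparison $\contC(B^x)\gtrsim\contC(Q^x)$ must now come from iterated doubling rather than translation invariance.
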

	\begin{proof}
		The proof of Lemma \ref{equivoftwomaximal-1} follows the same argument as that in Lemma \ref{equivoftwomaximal}, where an analogous result was established for $\cont$. The main components of the proof of Lemma \ref{equivoftwomaximal} were Lemmas \ref{content_doubling_lemma} and \ref{eqivalencemaixmalleema}, which one replaces here with Lemmas \ref{doubling-contC} and \ref{content_3cube_estimate}, while all other aspects of the proof of Lemma \ref{equivoftwomaximal} are quite general and are satisfied by outer measures. 
Therefore, we omit the details.
	\end{proof}

	\begin{proof}[Proof of Theorem \ref{maxi-esti-new}]
		By Theorem \ref{com-cap}, since $\contC \sim \C$, it is enough to prove the Theorem \ref{maxi-esti-new} for $\contC$. 
		First, we prove $(i)$. Proposition \ref{deco-cube-int} shows that $\contC$ satisfies (P) and therefore, using  Theorem \ref{max-new}$(i)$, we get 
		\begin{align}\label{dyadic-contC-max-weak}
			\contC \left(\left\{ x\in \rn \, :\, M^d_{\contC} f(x) >t\right\}\right) \leq \frac{C}{t} \int_{\mathbb{R}^n} |f|\dcontC
		\end{align}
		for some constant $C>0$. 
		By Lemma \ref{equivoftwomaximal-1} and the above inequality \eqref{dyadic-contC-max-weak}, we have
		\begin{align*}
			\contC \left(\left\{ x\in \rn \, :\, \mathcal{M}_{\contC} f(x) >t\right\}\right) \leq & C \contC \left(\left\{ x\in \rn \, :\, M^d_{\contC} f(x) >ct\right\}\right) \\
			\leq & \frac{C_1}{t} \int_{\mathbb{R}^n} |f|\dcontC.
		\end{align*}
		This completes the proof of part $(i)$.
		
		The part $(ii)$ of the Theorem \ref{maxi-esti-new} is follows from the estimate $\|\mathcal{M}_{\contC}f\|_{L^{\infty}(\contC)} \leq \|f\|_{L^{\infty}(\contC)}$ along with the part $(i)$ i.e., weak type $(1,1)$ estimate and the Lemma 2.5 of \cite{ChenOoiSpector}. 
		
		This completes the proof of the theorem.
	\end{proof} 
	
	\begin{proof}[Proof of Theorems \ref{Lebesgue-differentiation-new}]
		Again, by Theorem \ref{com-cap}, it suffices to prove Theorem \ref{Lebesgue-differentiation-new} for the corresponding content $\contC$. The proof then follows the usual argument of the Lebesgue differentiation theorem presented in Theorems \ref{Lebesgue-differentiation-new-dyadic} and \ref{Lebesgue-diff-cont}, whose details we omit for brevity.
	\end{proof}
	
	\begin{proof}[Proofs of Theorems \ref{cz_C} and \ref{jn_content_C}]
		We make a final appeal to Theorem \ref{com-cap} so that it suffices to prove Theorems \ref{cz_C} and \ref{jn_content_C} for the corresponding content $\contC$ only. According to Lemma \ref{doubling-contC}, we know that $\contC$ is doubling.  Thus, as $\contC$ satisfies the Lebesgue differentiation theorem, as stated in Theorem \ref{Lebesgue-differentiation-new-dyadic}, the remainder of the proof for Theorem \ref{cz_C} follows the argument of Theorem \ref{cz}.      
		
		Finally, the proof of Theorem \ref{jn_content_C} follows almost in the line of the proof of Theorem \ref{jn_content}.  In particular, one observes that the main ingredients used in proving Theorem \ref{jn_content} were Proposition \ref{decom-cubes}, Proposition \ref{deco-cube-int}, and the Calder\'on-Zygmund decomposition (Theorem \ref{cz}).  As Propositions \ref{decom-cubes} and \ref{deco-cube-int} are shown to be valid for general monotone increasing set functions, they are applicable to $\contC$ while in place of Theorem \ref{cz} one utilizes Theorem \ref{cz_C}.  The remainder of the proof is unchanged, and we again omit the details for brevity.
		
		This completes the proof of Theorem \ref{cz_C} and \ref{jn_content_C}.
		
\end{proof}

	\section*{Acknowledgments} 
	R.~Basak is supported by the National Science and Technology Council of Taiwan under research grant numbers 113-2811-M-003-007/113-2811-M-003-039. Y.-W.~B.~Chen is supported by the National Science and Technology Council of Taiwan under research grant number 113-2811-M-002-027. P.~Roychowdhury is supported by the National Theoretical Science Research Center Operational Plan under project number 112L104040.   D.~Spector is supported by the National Science and Technology Council of Taiwan under research grant numbers 110-2115-M-003-020-MY3/113-2115-M-003-017-MY3 and the Taiwan Ministry of Education under the Yushan Fellow Program.
	
	\begin{bibdiv}
		\begin{biblist}
			
			\bib{ABKY}{article}{
				author={Aalto, Daniel},
				author={Berkovits, Lauri},
				author={Kansanen, Outi Elina},
				author={Yue, Hong},
				title={John-Nirenberg lemmas for a doubling measure},
				journal={Studia Math.},
				volume={204},
				date={2011},
				number={1},
				pages={21--37},
				issn={0039-3223},
				review={\MR{2794938}},
				doi={10.4064/sm204-1-2},
			}
			
			\bib{Adams}{article}{
   author={Adams, David R.},
   title={On the existence of capacitary strong type estimates in $R\sp{n}$},
   journal={Ark. Mat.},
   volume={14},
   date={1976},
   number={1},
   pages={125--140},
   issn={0004-2080},
   review={\MR{0417774}},
   doi={10.1007/BF02385830},
}

			\bib{Adams1}{article}{
   author={Adams, David R.},
   title={Sets and functions of finite $L\sp{p}$-capacity},
   journal={Indiana Univ. Math. J.},
   volume={27},
   date={1978},
   number={4},
   pages={611--627},
   issn={0022-2518},
   review={\MR{0486575}},
   doi={10.1512/iumj.1978.27.27040},
}
			
			\bib{AdamsChoquet}{article}{
   author={Adams, David R.},
   title={A note on Choquet integrals with respect to Hausdorff capacity},
   conference={
      title={Function spaces and applications},
      address={Lund},
      date={1986},
   },
   book={
      series={Lecture Notes in Math.},
      volume={1302},
      publisher={Springer, Berlin},
   },
   isbn={3-540-18905-X},
   date={1988},
   pages={115--124},
   review={\MR{0942261}},
   doi={10.1007/BFb0078867},
}
\bib{AdamsChoquet1prime}{article}{
   author={Adams, David R.},
   title={Weighted capacity and the Choquet integral},
   journal={Proc. Amer. Math. Soc.},
   volume={102},
   date={1988},
   number={4},
   pages={879--887},
   issn={0002-9939},
   review={\MR{0934860}},
   doi={10.2307/2047327},
}

			\bib{AdamsChoquet1}{article}{
				author={Adams, David R.},
				title={Choquet integrals in potential theory},
				journal={Publ. Mat.},
				volume={42},
				date={1998},
				number={1},
				pages={3--66},
				issn={0214-1493},
				review={\MR{1628134}},
				url={https://www.jstor.org/stable/43736615},
			}
			
			\bib{AdamsHedberg}{book}{
				author={Adams, David R.},
				author={Hedberg, Lars Inge},
				title={Function spaces and potential theory},
				series={Grundlehren der mathematischen Wissenschaften [Fundamental
					Principles of Mathematical Sciences]},
				volume={314},
				publisher={Springer-Verlag, Berlin},
				date={1996},
				pages={xii+366},
				isbn={3-540-57060-8},
				review={\MR{1411441}},
				doi={10.1007/978-3-662-03282-4},
			}
			
			\bib{AdamsPolking}{article}{
   author={Adams, David R.},
   author={Polking, John C.},
   title={The equivalence of two definitions of capacity},
   journal={Proc. Amer. Math. Soc.},
   volume={37},
   date={1973},
   pages={529--534},
   issn={0002-9939},
   review={\MR{0328109}},
   doi={10.2307/2039477},
}
			
			\bib{Anger}{article}{
   author={Anger, Bernd},
   title={Representation of capacities},
   journal={Math. Ann.},
   volume={229},
   date={1977},
   number={3},
   pages={245--258},
   issn={0025-5831},
   review={\MR{0466588}},
   doi={10.1007/BF01391470},
}
			
			\bib{ChenOoiSpector}{article}{
				author={Chen, You-Wei Benson},
				author={Ooi, Keng Hao},
				author={Spector, Daniel},
				title={Capacitary maximal inequalities and applications},
				journal={J. Funct. Anal.},
				volume={286},
				date={2024},
				number={12},
				pages={Paper No. 110396, 31},
				issn={0022-1236},
				review={\MR{4729407}},
				doi={10.1016/j.jfa.2024.110396},
			}

							\bib{ChenClaros}{article}{
				   author={Chen, You-Wei Benson},
   author={Claros, Alejandro},
   author={},
   title={$\beta$-dimensional sharp maximal function and applications},
   journal={preprint},
   volume={},
   date={},
   number={\text{arXiv:2407.04456}},
   pages={	},
   issn={},
   review={},
   doi={},
}

			\bib{ChenSpector}{article}{
				url = {https://doi.org/10.1515/acv-2022-0084},
				title = {On functions of bounded $\beta$-dimensional mean oscillation},
				title = {},
				author = {You-Wei Chen and Daniel Spector},
				journal = {Advances in Calculus of Variations},
				doi = {doi:10.1515/acv-2022-0084},
				year = {2023},
				lastchecked = {2024-06-15}
			}
			
			\bib{Choquet}{article}{
   author={Choquet, Gustave},
   title={Theory of capacities},
   journal={Ann. Inst. Fourier (Grenoble)},
   volume={5},
   date={1953/54},
   pages={131--295 (1955)},
   issn={0373-0956},
   review={\MR{0080760}},
}
			
				\bib{CKK}{article}{
				   author={Cianchi, Andrea},
   author={Korobkov, Mikhail V.},
   author={Kristensen, Jan},
   title={Distortion of Hausdorff measures under Orlicz--Sobolev maps},
   journal={preprint},
   volume={},
   date={},
   number={\text{arXiv:2208.08152}},
   pages={	},
   issn={},
   review={},
   doi={},
}
			
			\bib{FKR}{article}{
   author={Ferone, Adele},
   author={Korobkov, Mikhail V.},
   author={Roviello, Alba},
   title={On some universal Morse-Sard type theorems},
   language={English, with English and French summaries},
   journal={J. Math. Pures Appl. (9)},
   volume={139},
   date={2020},
   pages={1--34},
   issn={0021-7824},
   review={\MR{4108405}},
   doi={10.1016/j.matpur.2020.05.002},
}

			\bib{HH}{article}{
   author={Harjulehto, Petteri},
   author={Hurri-Syrj\"anen, Ritva},
   title={On Choquet integrals and Poincar\'e-Sobolev inequalities},
   journal={J. Funct. Anal.},
   volume={284},
   date={2023},
   number={9},
   pages={Paper No. 109862, 18},
   issn={0022-1236},
   review={\MR{4545158}},
   doi={10.1016/j.jfa.2023.109862},
}

\bib{HH1}{article}{
   author={Harjulehto, Petteri},
   author={Hurri-Syrj\"anen, Ritva},
   title={Estimates for the variable order Riesz potential with
   applications},
   conference={
      title={Potentials and partial differential equations---the legacy of
      David R. Adams},
   },
   book={
      series={Adv. Anal. Geom.},
      volume={8},
      publisher={De Gruyter, Berlin},
   },
   isbn={978-3-11-079265-2},
   isbn={978-3-11-079272-0},
   isbn={978-3-11-079278-2},
   date={[2023] \copyright 2023},
   pages={127--155},
   review={\MR{4654515}},
}

\bib{HH2}{article}{
   author={Harjulehto, Petteri},
   author={Hurri-Syrj\"anen, Ritva},
   title={On Choquet Integrals and Sobolev Type Inequalities},
   journal={Matematica},
   volume={3},
   date={2024},
   number={4},
   pages={1379--1399},
   review={\MR{4842143}},
   doi={10.1007/s44007-024-00131-z},
}

\bib{HKST}{article}{
   author={Hatano, Naoya},
   author={Kawasumi, Ryota},
   author={Saito, Hiroki},
   author={Tanaka, Hitoshi},
   title={Choquet integrals, Hausdorff content and fractional operators},
   journal={Bull. Aust. Math. Soc.},
   volume={110},
   date={2024},
   number={2},
   pages={355--366},
   issn={0004-9727},
   review={\MR{4803163}},
   doi={10.1017/S000497272400011X},
}
			
			\bib{EvansGariepy}{book}{
				author={Evans, Lawrence C.},
				author={Gariepy, Ronald F.},
				title={Measure theory and fine properties of functions},
				series={Textbooks in Mathematics},
				edition={Revised edition},
				publisher={CRC Press, Boca Raton, FL},
				date={2015},
				pages={xiv+299},
				isbn={978-1-4822-4238-6},
				review={\MR{3409135}},
			}
			
			\bib{Grafakos1class}{book}{
				author={Grafakos, Loukas},
				title={Classical Fourier analysis},
				series={Graduate Texts in Mathematics},
				volume={249},
				edition={3},
				publisher={Springer, New York},
				date={2014},
				pages={xviii+638},
				isbn={978-1-4939-1193-6},
				isbn={978-1-4939-1194-3},
				review={\MR{3243734}},
				doi={10.1007/978-1-4939-1194-3},
			}
		
		\bib{JohnNirenberg}{article}{
			author={John, F.},
			author={Nirenberg, L.},
			title={On functions of bounded mean oscillation},
			journal={Comm. Pure Appl. Math.},
			volume={14},
			date={1961},
			pages={415--426},
			issn={0010-3640},
			review={\MR{0131498}},
			doi={10.1002/cpa.3160140317},
		}
		
		\bib{KK}{article}{
   author={Korobkov, Mikhail V.},
   author={Kristensen, Jan},
   title={The trace theorem, the Luzin $N$- and Morse-Sard properties for
   the sharp case of Sobolev-Lorentz mappings},
   journal={J. Geom. Anal.},
   volume={28},
   date={2018},
   number={3},
   pages={2834--2856},
   issn={1050-6926},
   review={\MR{3833820}},
   doi={10.1007/s12220-017-9936-7},
}

\bib{Mazya}{article}{
   author={Maz\cprime ja, V. G.},
   title={Certain integral inequalities for functions of several variables},
   language={Russian},
   conference={
      title={Problems of mathematical analysis, No. 3: Integral and
      differential operators. Differential equations (Russian)},
   },
   book={
      publisher={Izdat. Leningrad. Univ., Leningrad},
   },
   date={1972},
   pages={33--68},
   review={\MR{0344880}},
}

\bib{MazyaHavin}{article}{
   author={Maz\cprime ja, V. G.},
   author={Havin, V. P.},
   title={A nonlinear potential theory},
   journal={Uspehi Mat. Nauk},
   volume={27},
   date={1972},
   number={6},
   pages={67--138},
   issn={0042-1316},
   review={\MR{0409858}},
}
			
			\bib{MS}{article}{
				author={Mart\'{\i}nez, \'{A}ngel D.},
				author={Spector, Daniel},
				title={An improvement to the John-Nirenberg inequality for functions in
					critical Sobolev spaces},
				journal={Adv. Nonlinear Anal.},
				volume={10},
				date={2021},
				number={1},
				pages={877--894},
				issn={2191-9496},
				review={\MR{4191703}},
				doi={10.1515/anona-2020-0157},
			}
			
			\bib{Meyers}{article}{
   author={Meyers, Norman G.},
   title={A theory of capacities for potentials of functions in Lebesgue
   classes},
   journal={Math. Scand.},
   volume={26},
   date={1970},
   pages={255--292 (1971)},
   issn={0025-5521},
   review={\MR{0277741}},
   doi={10.7146/math.scand.a-10981},
}
			
			\bib{OP}{article}{
   author={Ooi, Keng Hao},
   author={Phuc, Nguyen Cong},
   title={The Hardy-Littlewood maximal function, Choquet integrals, and
   embeddings of Sobolev type},
   journal={Math. Ann.},
   volume={382},
   date={2022},
   number={3-4},
   pages={1865--1879},
   issn={0025-5831},
   review={\MR{4403237}},
   doi={10.1007/s00208-021-02227-1},
}

\bib{OP1}{article}{
   author={Ooi, Keng Hao},
   author={Phuc, Nguyen Cong},
   title={On a capacitary strong type inequality and related capacitary
   estimates},
   journal={Rev. Mat. Iberoam.},
   volume={38},
   date={2022},
   number={2},
   pages={589--599},
   issn={0213-2230},
   review={\MR{4404777}},
   doi={10.4171/rmi/1285},
}			
			\bib{OV}{article}{
				author={Orobitg, Joan},
				author={Verdera, Joan},
				title={Choquet integrals, Hausdorff content and the Hardy-Littlewood
					maximal operator},
				journal={Bull. London Math. Soc.},
				volume={30},
				date={1998},
				number={2},
				pages={145--150},
				issn={0024-6093},
				review={\MR{1489325}},
				doi={10.1112/S0024609397003688},
			}

			\bib{PonceSpector}{book}{
				author={Ponce, Augusto C.},
				author={Spector, D.},
				title = {Some remarks on capacitary integrals and measure theory},
				year = {2023},
				pages = {235--264},
				isbn = {9783110792720},
				publisher = {De Gruyter},
				url = {https://doi.org/10.1515/9783110792720-011},
				booktitle = {Potentials and Partial Differential Equations},
				booktitle = {The Legacy of David R. Adams},
				editor = {Suzanne Lenhart and Jie Xiao},
				address = {Berlin, Boston},
				lastchecked = {2024-06-07}
			}
			
			\bib{PonceSpector1}{article}{
   author={Ponce, Augusto C.},
   author={Spector, Daniel},
   title={A boxing inequality for the fractional perimeter},
   journal={Ann. Sc. Norm. Super. Pisa Cl. Sci. (5)},
   volume={20},
   date={2020},
   number={1},
   pages={107--141},
   issn={0391-173X},
   review={\MR{4088737}},
}
			
			
			\bib{ST}{article}{
   author={Saito, Hiroki},
   author={Tanaka, Hitoshi},
   title={Dual of the Choquet spaces with general Hausdorff content},
   journal={Studia Math.},
   volume={266},
   date={2022},
   number={3},
   pages={323--335},
   issn={0039-3223},
   review={\MR{4450794}},
   doi={10.4064/sm210415-29-1},
}
			
			\bib{STW}{article}{
				author={Saito, Hiroki},
				author={Tanaka, Hitoshi},
				author={Watanabe, Toshikazu},
				title={Abstract dyadic cubes, maximal operators and Hausdorff content},
				journal={Bull. Sci. Math.},
				volume={140},
				date={2016},
				number={6},
				pages={757--773},
				issn={0007-4497},
				review={\MR{3543752}},
				doi={10.1016/j.bulsci.2016.02.001},
			}
			
			\bib{STW1}{article}{
   author={Saito, Hiroki},
   author={Tanaka, Hitoshi},
   author={Watanabe, Toshikazu},
   title={Block decomposition and weighted Hausdorff content},
   journal={Canad. Math. Bull.},
   volume={63},
   date={2020},
   number={1},
   pages={141--156},
   issn={0008-4395},
   review={\MR{4059812}},
   doi={10.4153/s000843951900033x},
}

\bib{STW2}{article}{
   author={Saito, Hiroki},
   author={Tanaka, Hitoshi},
   author={Watanabe, Toshikazu},
   title={Fractional maximal operators with weighted Hausdorff content},
   journal={Positivity},
   volume={23},
   date={2019},
   number={1},
   pages={125--138},
   issn={1385-1292},
   review={\MR{3908380}},
   doi={10.1007/s11117-018-0598-x},
}
			
			\bib{S}{book}{
				author={Stein, Elias M.},
				title={Singular integrals and differentiability properties of functions},
				series={Princeton Mathematical Series, No. 30},
				publisher={Princeton University Press, Princeton, N.J.},
				date={1970},
				pages={xiv+290},
				review={\MR{0290095}},
			}
			
			\bib{Stein}{book}{
				author={Stein, Elias M.},
				title={Harmonic analysis: real-variable methods, orthogonality, and
					oscillatory integrals},
				series={Princeton Mathematical Series},
				volume={43},
				note={With the assistance of Timothy S. Murphy;
					Monographs in Harmonic Analysis, III},
				publisher={Princeton University Press, Princeton, NJ},
				date={1993},
				pages={xiv+695},
				isbn={0-691-03216-5},
				review={\MR{1232192}},
			}
			
			\bib{Yang-Yuan}{article}{
				author={Yang, Dachun},
				author={Yuan, Wen},
				title={A note on dyadic Hausdorff capacities},
				journal={Bull. Sci. Math.},
				volume={132},
				date={2008},
				number={6},
				pages={500--509},
				issn={0007-4497},
				review={\MR{2445577}},
				doi={10.1016/j.bulsci.2007.06.005},
			}
			
		\end{biblist}
	\end{bibdiv}
	
\end{document}